\DeclareMathOperator{\End}{End} 
\DeclareMathOperator{\Ext}{Ext}
\DeclareMathOperator{\Hom}{Hom} 
\DeclareMathOperator{\rad}{rad} 
\DeclareMathOperator{\corank}{corank}
\DeclareMathOperator{\sh}{sh}
\DeclareMathOperator{\rep}{rep}
\DeclareMathOperator{\GL}{GL}
\definecolor{lightgreen}{rgb}{0.8,1,0.8}
\definecolor{dlightgreen}{rgb}{0.6,1,0.6}
\definecolor{lightblue}{rgb}{0.64,0.83,0.93}
\definecolor{dlightblue}{rgb}{0.39,0.58,0.93}
\definecolor{lightviolet}{rgb}{0.85,0.44,0.84}
\definecolor{dlightviolet}{rgb}{0.7,0.23,0.94}
\definecolor{grey}{rgb}{0.8,0.8,0.8}
\newcommand*{\punkte}{\dots\unkern}
\newcolumntype{C}[1]{>{\centering\arraybackslash}p{#1}}
\newcommand{\A}{\mathcal{A}} 
\newcommand{\B}{\mathcal{B}}
\newcommand{\Q}{\mathcal{Q}} 
\newcommand{\Orb}{\mathcal{O}} 
\newcommand{\N}{\mathcal{N}}
\newcommand{\dimv}{\underline{\dim}}
\newcommand{\df}{\underline{d}}
\newcommand{\CC}{\mathcal{C}}
\newtheorem{theorem}{Theorem}[section]
\newtheorem{lemma}[theorem]{Lemma}
\newtheorem{definition}[theorem]{Definition}
\newtheorem{proposition}[theorem]{Proposition}
\newtheorem{corollary}[theorem]{Corollary}
\newtheorem{remark}[theorem]{Remark}
\newtheorem{example}[theorem]{Example}
\newtheorem*{theorem*}{Theorem}
\begin{document}
\parindent0pt
\title{\bf Staircase algebras and graded nilpotent pairs}

\author{Magdalena Boos\\Ruhr-Universit\"at Bochum\\ Faculty of Mathematics\\  D - 44780 Bochum\\ Magdalena.Boos-math@ruhr-uni-bochum.de}
\date{}
\maketitle

\begin{abstract}
We consider a class of finite-dimensional algebras, the so-called \textquotedblleft Staircase algebras\textquotedblright~ parametrized by Young diagrams.  We develop a complete classification of representation types of these algebras and look into finite, tame (concealed) and wild cases in more detail. Our results are translated to the setup of graded nilpotent pairs for which we prove certain finiteness conditions.\\[1ex]
Keywords: Graded nilpotent pairs, finite-dimensional algebras, representation type
\end{abstract}

\section{Introduction}\label{intro}
Graded nilpotent pairs naturally appear as a generalization of principal nilpotent pairs introduced by Ginzburg \cite{Gi} and studied by Panyushev \cite{Pan3}, Yu \cite{Yu} and others.\\[1ex]
 Such pair is given as an element $(\varphi, \psi)$ of the nilpotent commuting variety of a fixed bigraded vector space $V$, such that both nilpotent operators are compatible with the bigrading in a natural commuting way: $\varphi$ respects the bigrading "horizontally" and $\psi$ respects the bigrading "vertically". The non-zero components of the bigrading of $V$ induce a partition $\lambda$ and the graded nilpotent pair is called $\lambda$-shaped, thus. \\[1ex] 
One of our main results gives an answer to a standard Lie-theoretic question: Are there only finitely many $\lambda$-shaped graded nilpotent pairs up to base change by a Levi respecting the grading?
\begin{theorem*}[\ref{gradedLambda}]
The number of $\lambda$-shaped graded nilpotent pairs is finite (modulo base change in the homogeneous components) if and only if 
\begin{enumerate}
 \item $\lambda\in\{(n),~ (1^k,n-k),~ (2,n-2),~ (1^{n-4},2^2)\}$ for some $k\leq n$,
\item  $n\leq 8$ and $\lambda\notin\{(1,3,4),~ (2,3^2),~ (1,2^2,3),~ (1^2,2,4)\}$.
\end{enumerate}
\end{theorem*}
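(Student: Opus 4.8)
The plan is to reduce the counting problem to the representation type of the staircase algebra $\mathcal{A}_\lambda$ attached to the Young diagram $\lambda$, and then to read the answer off from the classification of representation types established earlier. The starting observation is that a $\lambda$-shaped graded nilpotent pair is exactly a module over $\mathcal{A}_\lambda$: the homogeneous components $V_{(i,j)}$ occupy the boxes of $\lambda$, the horizontal operator $\varphi$ and the vertical operator $\psi$ furnish the two families of arrows of the grid quiver, the relation $\varphi\psi=\psi\varphi$ is precisely the commutativity relation imposed on each unit square, and the nilpotency of $\varphi$ and $\psi$ is forced by the staircase shape. Base change by the grading-respecting Levi $\prod_{(i,j)}\GL(V_{(i,j)})$ is intertwined with isomorphism of the associated modules, so I would first upgrade this dictionary to an honest bijection between base-change orbits of $\lambda$-shaped pairs and isomorphism classes of $\mathcal{A}_\lambda$-modules of the dimension vector $\underline{d}_\lambda$ prescribed by the shape.

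Granting that bijection, the finiteness question becomes transparent. A pair decomposes, via Krull--Schmidt on the module side, into indecomposable summands, and there are only finitely many orbits lying in $\Rep(\mathcal{A}_\lambda,\underline{d}_\lambda)$ exactly when only finitely many indecomposable $\mathcal{A}_\lambda$-modules embed into $\underline{d}_\lambda$. Since $\underline{d}_\lambda$ is sincere, this local finiteness is equivalent to global finiteness: the number of $\lambda$-shaped graded nilpotent pairs is finite modulo base change if and only if $\mathcal{A}_\lambda$ is of finite representation type. This equivalence is the substance of the promised translation from the algebra side to the Lie-theoretic side.

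The remaining step is combinatorial: feed the classification of representation types of staircase algebras into this criterion and verify that the resulting set of diagrams is exactly the list in (1)--(2). The families in (1) are the shapes whose grid quiver, after imposing commutativity, remains representation-finite for every $n$ --- the row and column $(n)$, the hooks $(1^k,n-k)$, and the two mildly thickened shapes $(2,n-2)$ and $(1^{n-4},2^2)$ --- these being the diagrams too thin to support a genuinely two-dimensional subconfiguration. Condition (2) records the small-$n$ regime $n\le 8$, where finiteness survives for every remaining $\lambda$ except the four critical diagrams $(1,3,4),(2,3^2),(1,2^2,3),(1^2,2,4)$ at $n=8$, which are precisely the minimal shapes on which $\mathcal{A}_\lambda$ first becomes tame-concealed or wild. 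The threshold is of the same flavour as the classical fact that a commutative ladder stays representation-finite only up to length four, which here reappears as the boundary between the two-column shapes allowed in (1) and those excluded.

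I expect the main obstacle to be this last matching rather than the formal translation. Establishing the sharp dichotomy requires controlling the Tits/Euler form of $\mathcal{A}_\lambda$, using covering techniques and comparison with the known lists of representation-finite, tame and wild algebras, and --- most delicately --- doing so for the specific sincere dimension vector $\underline{d}_\lambda$ forced by the shape rather than merely for the abstract algebra. Pinning down the four exceptional diagrams at $n=8$, and certifying that every other diagram with $n\le 8$ is finite while every diagram beyond the families of (1) with $n\ge 9$ is not, is exactly the point where a careful diagram-by-diagram analysis becomes unavoidable.
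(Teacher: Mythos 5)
Your proposal follows essentially the same route as the paper: the paper's proof of Theorem~\ref{gradedLambda} is exactly the two-step reduction you describe, citing the dictionary between $\lambda$-graded nilpotent pairs modulo Levi-base change and isomorphism classes of $\A(\lambda)$-representations (Theorem~\ref{gradedCriterionLambda}) and then reading off the list from the classification of representation types (Theorem~\ref{reptype}). Two minor slips worth noting, neither fatal: the shape $\lambda$ does not prescribe a single sincere dimension vector $\underline{d}_\lambda$ (the shape is only the staircase closure of the support, cf.\ Example~\ref{gradedExample}, so the infinite direction needs padding by a sincere summand rather than your sincerity shortcut), and the four exceptional diagrams at $n=8$ are tame or tame concealed, not wild --- what matters for the theorem is only that they are representation-infinite.
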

Furthermore, we obtain (in)finiteness conditions for graded nilpotent pairs (modulo Levi-base change) on a fixed bigraded vector space $V$ in Lemma \ref{gradedTame} and Lemma \ref{gradedInfinite}.\\[1ex]
In order to obtain our results on graded nilpotent pairs, we
 introduce a class of finite-dimensional algebras, the so-called staircase algebras in Section \ref{SectStaircase}. We show that every graded nilpotent pair can be considered as a representation of such a staircase algebra and approach this class of algebras from the angle of  representation theory of finite-dimensional algebras (for example with Auslander-Reiten techniques). Our results are translated to graded nilpotent pairs in Section \ref{SectGradedNilp}.\\[1ex]
Staircase algebras, denoted by $\A(\lambda)$, are quite interesting themselves - they are parametrized by Young diagrams $Y(\lambda)$ (or, equivalently, partitions $\lambda$) and are defined by quivers with (exclusively) commutativity relations. Our main aim concerning staircase algebras is to classify their representation types completely in Section \ref{SectRepTypes}. 
\begin{theorem*}[\ref{reptype}]
A staircase algebra $\A(\lambda)$ is 
\begin{itemize}
\item representation-finite if and only if the conditions of Theorem \ref{gradedLambda} hold true.
\item tame concealed if and only if $\lambda$ comes up in the following list:\\ $(3,6)$, $(1,2,6)$, $(1,3,4)$, $(2^2,5)$, $(1^2,2,4)$, $(1,2^2,3)$, $(1^3,3^2)$, $(1^3,2^3)$, $(1^4,2,3)$.
\item tame, but not tame concealed if and only if $\lambda$ comes up in the following list:\\ $(4,5)$, $(5^2)$, $(1,4^2)$, $(2,3^2)$, $(3^3)$, $(2^3,3)$, $(1,2^4)$, $(2^5)$.
\end{itemize} 
 Otherwise, $\A(\lambda)$ is of wild representation type. 
\end{theorem*}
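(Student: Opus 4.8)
The plan is to classify the representation types of staircase algebras $\A(\lambda)$ by exploiting the fact that each $\A(\lambda)$ is a quiver algebra with only commutativity relations, so that standard machinery for finite-dimensional algebras over an algebraically closed field applies. First I would establish the finite case: by the previous theorem (\ref{gradedLambda}), which is assumed, the listed partitions correspond exactly to the representation-finite staircase algebras. Since graded nilpotent pairs are representations of $\A(\lambda)$ as asserted in the introduction, and since finiteness of the module category is equivalent to finiteness of the set of indecomposables up to iso, the representation-finite classification is already in hand from Theorem \ref{gradedLambda}; I would only need to check that the equivalence of ``finitely many $\lambda$-shaped pairs'' and ``$\A(\lambda)$ representation-finite'' is genuine, i.e.\ that the functor relating pairs and $\A(\lambda)$-modules respects indecomposability and the relevant orbit/iso counts.

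Next I would treat the tame concealed cases. The natural strategy is to show, for each of the nine listed partitions, that $\A(\lambda)$ is a concealed algebra of Euclidean (extended Dynkin) type, i.e.\ the endomorphism algebra of a preprojective tilting module over a tame hereditary algebra $k\tilde{\Delta}$. Concretely, I would compute the Tits/Euler quadratic form of $\A(\lambda)$, verify it is positive semidefinite with a one-dimensional radical (the signature of a tame concealed algebra), and identify the underlying Euclidean diagram. A cleaner route is to appeal to Bongartz's list of critical algebras: a representation-infinite algebra is tame concealed if and only if it is a \emph{minimal} representation-infinite algebra, equivalently it appears (up to the frames) in Bongartz's classification of concealed algebras. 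For each of the nine $\lambda$ I would exhibit the underlying critical frame and check minimality by removing vertices and confirming the resulting algebras are representation-finite, which ties back to the finite list.

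For the remaining tame (non-concealed) cases I would show that each of the eight listed algebras is tame but contains, or degenerates to, a tame concealed convex subcategory, so it is not itself minimal representation-infinite, yet still has only one-parameter families of indecomposables in each dimension. The standard tool here is the covering theory / one-point (co)extension analysis: write $\A(\lambda)$ as a (co)extension of a tame concealed or tubular algebra and invoke the corresponding tameness criteria (e.g.\ Ringel's tubular algebras, or the criterion that a semidefinite Tits form with the right radical rank forces tameness). Finally, for wildness, I would use the standard reduction: it suffices to find, for every partition not on the finite, tame concealed, or tame lists, a convex subcategory or a well-chosen quotient that is a known wild algebra (for instance a wild concealed algebra, a subquiver supporting a wild hereditary pattern, or an algebra whose Tits form is indefinite). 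By convexity and the fact that representation type is inherited upward along full convex subcategories, wildness of the subcategory forces wildness of $\A(\lambda)$.

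The main obstacle will be the bookkeeping in the tame-but-not-tame-concealed and the wild cases. Proving tameness positively is hard: one must actually describe the module category (the tubular or one-parameter structure) rather than merely bound the Tits form, since a semidefinite form is necessary but not sufficient for tameness. I expect the heaviest work to be the explicit Auslander--Reiten and covering-theoretic analysis of the eight tubular-type algebras, together with the uniform construction of wild subcategories for the cofinitely many remaining $\lambda$; organizing the latter so that a finite set of ``minimal wild'' witnesses dominates all non-listed partitions (via adding boxes to the Young diagram, which embeds smaller staircase algebras as convex subcategories) is the conceptual key that keeps the case analysis finite.
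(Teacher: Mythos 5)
Your first step is circular. In the paper, Theorem \ref{gradedLambda} is not an independent input: it is \emph{deduced} from Theorem \ref{reptype} (via the dictionary of Theorem \ref{gradedCriterionLambda} between graded nilpotent pairs and $\A(\lambda)$-modules). The representation-finite classification is precisely the content that must be proved, and you cannot obtain it by ``assuming'' \ref{gradedLambda}. The paper proves it directly: the infinite families $(1^k,n-k)$ reduce to the $A_n$ quiver (Gabriel), $(2,n-2)=(1^{n-4},2^2)^T$ is handled by knitting an Auslander--Reiten quiver that visibly stabilizes as $\lambda_2$ grows, and the sporadic cases with $n\leq 8$ are settled by the explicit AR quivers in the appendix, using Bongartz's criterion (Theorem \ref{criterionFinite}), which applies because $\A(\lambda)$ admits a preprojective component (separation condition, Proposition \ref{basicproperties}). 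None of this is supplied by your proposal.

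The second gap is that you miss the structural lemma that makes the whole quadratic-form strategy decisive, and as a result you assert something false in this setting. The paper first proves (Proposition \ref{basicproperties}) that every $\A(\lambda)$ is \emph{strongly simply connected}; by the Br\"ustle--de la Pe\~na--Skowro\'nski theorem (Theorem \ref{criterionTame}), tameness is then \emph{equivalent} to weak non-negativity of the Tits form, equivalently to the absence of a hypercritical convex subcategory (Unger's list). Your claim that ``a semidefinite form is necessary but not sufficient for tameness,'' and the ensuing plan to carry out heavy covering-theoretic and tubular analysis for the eight non-concealed tame algebras, is exactly what this lemma renders unnecessary: the paper proves tameness of, e.g., $(5^2)$, $(4,5)$, $(3^3)$ by checking no hypercritical convex subcategory occurs (with a direct non-negativity computation of $q_\lambda$ and its radical in Lemma \ref{MaxTameStraightForw3} as an alternative), and proves non-concealedness by exhibiting a tame concealed proper convex subcategory from the BHV list. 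Likewise wildness is certified by explicit positive vectors $v$ with $q_\lambda(v)=-1$ (Corollary \ref{criterionWild}, again resting on strong simple connectedness plus Drozd's dichotomy), not by hunting for wild hereditary subpatterns. Your reduction idea (adding boxes embeds $\A(\lambda)$ as a convex subcategory, so wildness propagates upward) does match the paper's Lemma \ref{reductioncomp}, but you omit the transposition symmetry $\A(\lambda)\cong$-type equivalence with $\A(\lambda^T)$ (Lemma \ref{reductionsymm}) and the organization into the length-$2$ and length-$3$ Lemmas \ref{reptypel2} and \ref{reptypel3}, which together with finitely many explicit $q_\lambda(v)=-1$ witnesses is what keeps the case analysis finite. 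Without the strong simple connectedness input and an independent proof of the finite list, your outline does not close.
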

The orbit type of each staircase algebra is classified in Lemma \ref{orbittype} and we prove a correlation between representation types and orbit types for the class of staircase algebras.  We obtain a complete hierarchy of algebras which makes clear the transitions between representation types and is visualized in Appendix \ref{hierarchy}.\\[1ex]
In Section \ref{SectCasestudy}, the finite, tame (concealed) and minimal wild cases are examined in more detail. For example, all staircase algebras of finite type and most of the tame ones are tilted and (except for infinite families of representation-finite algebras) all Auslander-Reiten quivers of the former are attached in Appendix \ref{appendix}.\\[1ex]
For each tame case, minimal nullroots which admit infinitely many isomorphism classes of representations are provided initiating the study of finiteness criteria for nilpotent graded pairs of a fixed bi-graded vector space.
For each case of wild representation type, we construct a $2$-parameter family of pairwise non-isomorphic representations.~\\[2ex]
{\bf Acknowledgments:} I am grateful to K. Bongartz and M. Reineke for various helpful discussions and ideas. I would like to thank M. Bulois and J. K\"ulshammer for helpful remarks and suggestions. Furthermore, I am thankful to  O. Kerner and L. Unger for helping me with finding and getting to know literature and known results. 
\section{Theoretical background}\label{SectTheory}
Let $K=\overline{K}$ be an algebraically closed field and let $\GL_n\coloneqq\GL_n(K)$ be the general linear group for a fixed integer $n\in\textbf{N}$ regarded as an affine variety. We begin by defining the notion of a graded nilpotent pair before including some facts about the representation theory of finite-dimensional algebras. We refer to \cite{ASS} for a thorough treatment of the latter.
\subsection{Graded nilpotent pairs}\label{gradNilpTheory}
Let $V= \bigoplus_{s,t\in\mathbf{Z}_{\geq 1}} V_{s,t}$ be an $N$-dimensional bigraded $K$-vector space; we formally set $V_{x,y}:=0$ for $(x,y)\notin \mathbf{Z}_{\geq 1}^2$.\\[1ex]
 Denote by $\N(V)$ the \textit{nilpotent cone} of nilpotent operators on $V$. The \textit{nilpotent commuting variety} of $V$ is defined by
\[\CC(V):=\{(\varphi, \psi)\in \N(V)\times \N(V)\mid [\varphi, \psi]=0\},\]
its elements are called \textit{commuting nilpotent pairs}.\\[1ex]
Such pair is called \textit{graded nilpotent pair}, if $\varphi$  restrict to each $V_{s,t}$ "horizontally" via 
\[\varphi_{s,t}:=\varphi\mid_{V_{s,t}}: V_{s,t} \rightarrow V_{s-1,t}\] and $\psi$ restrict to each $V_{s,t}$ "vertically" via
\[\psi_{s,t}:=\psi\mid_{V_{s,t}}: V_{s,t} \rightarrow V_{s,t-1}.\]
Note that the study of this setup is quite natural regarding the context of principal nilpotent pairs \cite{Gi}.\\[1ex]    
We define the \textit{shape} of $V$ by \[\sh(V):=\{(s,t)\mid \exists p,q\in\mathbf{Z}_{\geq 1}, p\geq s, q\geq t: V_{p,q}\neq 0 \}\]
which defines a Young diagram corresponding to a partition $\lambda(V)$. In more detail, the latter is given by $\lambda(V)_{i}= \sharp\{(s,h(V)-i)\in\sh(V) \mid s\in \mathbf{Z}_{\geq 1}\} $ if we define $h(V):= \max\{t\mid V_{1,t}\neq 0\} +1$.\\[1ex]
 In this case,  $(\varphi,\psi)$ is called a \textit{$\lambda$-graded nilpotent pair}. 
Note that these definitions depend on a fixed chosen grading of $V$, but not on the nilpotent pairs compatible with it. 
\begin{example}\label{gradedExample}
Let $V:= \bigoplus_{s,t\in\mathbf{Z}_{\geq 1}} V_{s,t}$ be a bigraded $K$-vector space, such that $V_{3,1}= V_{1,3}=V_{4,1}=K$, $V_{2,1}= V_{1,2}= K^2$, $V_{2,2}=K^3$ and $V_{s,t}=0$, otherwise. Let $(\varphi, \psi)$ be a graded nilpotent pair on $V$. Then we can illustrate the latter by
\begin{center}\small\begin{tikzpicture}
\matrix (m) [matrix of math nodes, row sep=1.81em,
column sep=1.8em, text height=0.9ex, text depth=0.1ex]
{ V_{4,3} &V_{3,3} & V_{2,3} &  V_{1,3} & & 0 & 0 & 0 &  K \\
V_{4,2} & V_{3,2} & V_{2,2} & V_{1,2} & = & 0 & 0 & K^3 & K^2 \\
V_{4,1} & V_{3,1} & V_{2,1} & V_{1,1} & & K & K & K^2 & 0 \\
 };
\path[->]
(m-1-4) edge  (m-2-4)
(m-1-3) edge  (m-2-3)
(m-1-2) edge  (m-2-2)
(m-1-1) edge  (m-2-1)
(m-1-1) edge  (m-1-2)
(m-1-2) edge  (m-1-3)
(m-1-3) edge  (m-1-4)
(m-2-1) edge  (m-2-2)
(m-2-2) edge  (m-2-3)
(m-2-3) edge  (m-2-4)
(m-2-1) edge  (m-3-1)
(m-2-2) edge  (m-3-2)
(m-2-3) edge  (m-3-3)
(m-2-4) edge  (m-3-4)
(m-3-1) edge  (m-3-2)
(m-3-2) edge  (m-3-3)
(m-3-3) edge  (m-3-4)
(m-1-1) edge[-,dotted]  (m-2-2)
(m-1-2) edge[-,dotted]  (m-2-3)
(m-1-3) edge[-,dotted]  (m-2-4)
(m-2-1) edge[-,dotted]  (m-3-2)
(m-2-2) edge[-,dotted]  (m-3-3)
(m-2-3) edge[-,dotted]  (m-3-4)
(m-2-8) edge node[above]{$\varphi_{2,2}$} (m-2-9)
(m-2-8) edge node[left]{$\psi_{2,2}$} (m-3-8)
(m-1-9) edge node[right]{$\psi_{1,3}$} (m-2-9)
(m-2-9) edge  (m-3-9)
(m-3-6) edge node[below]{$\varphi_{4,1}$}  (m-3-7)
(m-3-7) edge node[below]{$\varphi_{3,1}$}  (m-3-8)
(m-3-8) edge  (m-3-9)
(m-2-8) edge[-,dotted]  (m-3-9)
;\end{tikzpicture}\end{center} 

The Young diagram is accentuated on the right hand side, it is given by
\[Y(\lambda) = \begin{tabular}{|c|c|c}\cline{1-1}
(4,1)&\multicolumn{2}{c}{}\\ \cline{1-1}
(3,1)&\multicolumn{2}{c}{}\\ \cline{1-2}
(2,1)&(2,2)&\\ \hline
(1,1)&\multicolumn{2}{c|}{(1,2) \hspace{0.12cm}\vline ~~(1,3)} \\ \hline
\end{tabular}.\]
Thus, $\lambda(V)=(1,1,2,3)$ and $(\varphi,\psi)$ is a $\lambda(V)$-graded nilpotent pair.
\end{example}

\subsection{Basics of Representation Theory}\label{reptheory}
A \textit{finite quiver} $\Q$ is a directed graph $\Q=(\Q_0,\Q_1,s,t)$, given by a finite set of \textit{vertices} $\Q_0$, a finite set of \textit{arrows} $\Q_1$ and two maps  $s,t:\Q_1\rightarrow \Q_0$ defining the source $s(\alpha)$ and the target $t(\alpha)$ of an arrow $\alpha$. 
A \textit{path} is a sequence of arrows $\omega=\alpha_s\punkte\alpha_1$, such that $t(\alpha_{k})=s(\alpha_{k+1})$ for all $k\in\{1,\punkte,s-1\}$.\\[1ex]
 We define the \textit{path algebra} $K\Q$ as the $K$-vector space with a basis consisting of all paths in $\Q$, formally included is a path $\varepsilon_i$ of length zero for each $i\in \Q_0$ starting and ending in $i$. The multiplication  is defined by concatenation of paths, if possible, and equals $0$, otherwise.\\[1ex]
Let us define the \textit{arrow ideal} $R_{\Q}$ of $K\Q$ as the (two-sided) ideal which is generated (as an ideal) by all arrows in $\Q$. In particular, the arrow ideal of $\Q$ equals the radical of $K\Q$ if $\Q$ does not contain oriented cycles. An arbitrary ideal $I\subseteq K\Q$ is called \textit{admissible} if there exists an integer $s$, such that $R_{\Q}^s\subset I\subset R_{\Q}^2$. Given such admissible ideal $I$, we denote by $I(i,j)$ the set of paths in $I$ starting in $i$ and ending in $j$.\\[1ex]
Given such admissible ideal $I$, the path algebra of $\Q$, \textit{bound by} $I$ is defined as $\A:=K\Q/I$; it is a basic and finite-dimensional $K$-algebra \cite{ASS}; the elements of $I$ are the so-called \textit{relations} of $\A$. Then $\A$ is called \textit{triangular}, if $\Q$ does not contain an oriented cycle.\\[1ex]
Let $\rep_K\A$ be the abelian $K$-linear category of all finite-dimensional $\A$-representations which is equivalent to the category of  \textit{$K$-representations} of $\Q$, which are \textit{bound by $I$}, defined as follows: 
The objects are given by tuples 
$((M_i)_{i\in \Q_0},(M_\alpha)_{\alpha\in \Q_1})$, 
where the $M_i$ are $K$-vector spaces, and the $M_\alpha\colon M_{s(\alpha)}\rightarrow M_{t(\alpha)}$ are $K$-linear maps. For each path $\omega$ in $\Q$ as above, we denote $M_\omega=M_{\alpha_s}\cdot\punkte\cdot M_{\alpha_1}$ and ask a representation $M$ to fulfill $\sum_\omega\lambda_\omega M_\omega=0$ whenever $\sum_\omega\lambda_\omega\omega\in I$. A \textit{morphism of representations} $M=((M_i)_{i\in \Q_0},(M_\alpha)_{\alpha\in \Q_1})$ and
 \mbox{$M'=((M'_i)_{i\in \Q_0},(M'_\alpha)_{\alpha\in \Q_1})$} consists of a tuple of $K$-linear maps $(f_i\colon M_i\rightarrow M'_i)_{i\in \Q_0}$, such that $f_jM_\alpha=M'_\alpha f_i$ for every arrow $\alpha\colon i\rightarrow j$ in $\Q_1$. \\[1ex]
The \textit{dimension vector} of an $\A$-representation $M$  is defined by $\dimv M\in\mathbf{N}^{\Q_0}$; in more detail $(\dimv M)_{i}=\dim_K M_i$ for $i\in \Q_0$. 
Let us fix such a dimension vector $\df\in\mathbf{N}^{\Q_0}$ and denote by $\rep_K\A(\df)$ the full subcategory of $\rep_K\A$ which consists of all representations of dimension vector $\df$.\\[1ex]
By defining the affine space $R_{\df}K\Q:= \bigoplus_{\alpha\colon i\rightarrow j}\Hom_K(K^{d_i},K^{d_j})$, one realizes that its points $m$ naturally correspond to representations $M\in\rep_K K\Q(\df)$ with $M_i=K^{d_i}$ for $i\in \Q_0$. 
 Via this correspondence, the set of such representations bound by $I$ corresponds to a closed subvariety $R_{\df}\A\subset R_{\df}K\Q$.\\[1ex]
The algebraic group $\GL_{\df}=\prod_{i\in \Q_0}\GL_{d_i}$ acts on $R_{\df}K\Q$ and on $R_{\df}\A$ via base change, furthermore the $\GL_{\df}$-orbits $\Orb_M$ of this action are in bijection to the isomorphism classes of representations $M$ in $\rep_K\A(\df)$.\\[1ex]
Due to Krull, Remak and Schmidt, every finite-dimensional $\A$-representation decomposes into a direct sum of \textit{indecomposables} (which by definition do not decompose further). For certain classes of finite-dimensional algebras, a convenient tool for the classification of these indecomposable representations is the \textit{Auslander-Reiten quiver} $\Gamma_{\A}=\Gamma(\Q,I)$ of $\rep_K(\Q,I)$. Its vertices $[M]$ are given by the isomorphism classes of indecomposable representations of $\rep_K(\Q,I)$; the arrows between two such vertices $[M]$ and $[M']$ are parametrized by a basis of the space of so-called \textit{irreducible maps} $f\colon M\rightarrow M'$. One standard technique to calculate $\Gamma_{\A}$ is the \textit{knitting process} (see, for example, \cite[IV.4]{ASS}).\\[1ex]
 A component $\Pi_{\A}$ of $\Gamma_{\A}$ is called \textit{preprojective} if it does not contain oriented cycles and if every module in $\Pi_{\A}$ lies in the $\tau$-orbit of some projective. Its corresponding \textit{orbit quiver} $\Upsilon_{\A}$ is defined as the quiver whose vertices are given by the $\tau$-orbits $[X]$ of $\Pi_{\A}$. The number of arrows $[X] \rightarrow [X']$ in $\Upsilon_{\A}$ coincides with the maximal number of arrows $M\rightarrow M'$ in $\Pi_{\A}$, where $M\in [X], M'\in [X']$. The orbit type of $\A$ is defined to be the type of $\Upsilon_{\A}$. A representation $M$ is called \textit{sincere}, if every simple indecomposable of $\A$ comes up in a composition series of $M$.\\[1ex] 
We say that an algebra $\B = K\Q'/I'$ is a \textit{convex subcategory} of $\A = K\Q/I$, if $\Q'$ is a \textit{convex subquiver} of $\Q$ (that is, if two vertices $i,j$ are contained in $\Q'$, then every path of $\Q$ from $i$ to $j$ is completely contained in $\Q'$) and $I'\coloneqq \langle I(i,j)\mid i,j\in \Q'\rangle$. \\[1ex]
An indecomposable projective $P$ has a so-called \textit{separated} radical, if for arbitrary two non-isomorphic direct summands of its radical, their supports are contained in different components of the subquiver $\Q$ obtained by deleting all starting points of paths ending in $i$. We say that $\A$  \textit{fulfills the separation condition}, if every projective indecomposable has a separated radical. If this condition is fulfilled, $\A$ admits a preprojective component, see \cite{Bo4}. In general, the definition of an algebra to be \textit{strongly simply connected} algebra is quite involved. In case of a triangular algebra $\A$, there is an equivalent description, though : $\A$ is \textit{strongly simply connected} if and only if every convex subcategory of $\A$ satisfies the separation condition \cite{Sko2}. 
\subsection{Representation types}\label{reptypetheory}
Consider a finite-dimensional basic $K$-algebra $\A:=K\Q/I$. It is called of \textit{finite representation type}, if there are only finitely many isomorphism classes of indecomposable representations. If it is not of finite representation type, the algebra is of \textit{infinite representation type}. These infinite types split up into two disjoint cases; we say that the algebra $\A$ has
 \begin{itemize}
 \item  \textit{tame representation type} (or \textit{is tame}) if for every integer $n$ there is an integer $m_n$ and there are finitely generated $K[x]$-$\A$-bimodules $M_1,\punkte,M_{m_n}$ which are free over $K[x]$, such that for all but finitely many isomorphism classes of indecomposable right $\A$-modules $M$ of dimension $n$, there are elements $i\in\{1,\punkte,m_n\}$ and $\lambda\in K$, such that  $M\cong  K[x]/(x-\lambda)\otimes_{K[x]}M_i$.
 \item \textit{wild representation type} (or \textit{is wild}) if there is a finitely generated $K\langle X,Y\rangle$-$\A$-bimodule $M$ which is free over $K\langle X,Y\rangle$ and sends non-isomorphic finite-dimensional indecomposable $K\langle X,Y\rangle$-modules via the functor $\_\otimes_{K\langle X,Y\rangle}M$ to non-isomorphic indecomposable $\A$-modules.
\end{itemize} 
In 1979, Drozd proved the following dichotomy statement \cite{Dr}.
\begin{theorem}\label{dichotomy}
 Every finite-dimensional algebra is either tame or wild.
\end{theorem}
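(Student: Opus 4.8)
The statement is \emph{Drozd's tame--wild dichotomy}, and any self-contained proof must pass through the theory of \emph{bocses} (bimodules over a category equipped with a coalgebra structure, equivalently matrix problems). The plan is as follows. First I would replace the study of $\rep_K\A$ by an equivalent combinatorial problem: starting from a minimal projective presentation, one attaches to $\A$ a bocs $\mathfrak{B}$ — assembled from the Auslander-type bimodule of $\A$ together with a differential recording the relations — whose representation category is equivalent to $\rep_K\A$ modulo the harmless semisimple/projective part. This is the Drozd--Kleiner--Roiter reduction, and it converts the question about module type into a question about the representation type of a bocs, where solid and dotted arrows encode matrices and their admissible base changes.

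Next I would invoke the \emph{reduction algorithm} for bocses. A bocs admits a short list of elementary transformations — regularization (deleting arrows that occur as leading terms of the differential), vertex deletion, edge and loop reduction, and \emph{unraveling} at a point (which localises the base of a one-parameter family and splits a vertex into a $K[x]$-indexed family) — each producing a new bocs whose representations are in functorial bijection with those of the old one, and none of which increases a suitable size invariant. The crucial structural fact is that, in each fixed dimension, only finitely many reductions can be performed before the bocs becomes \emph{minimal}, i.e. its solid part carries no further nontrivial differential, so the layer filtration stabilises.

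The dichotomy then emerges from analysing minimal bocses. I would show that a minimal bocs is of exactly one of two kinds. Either its layered structure has bounded local complexity, so that in each dimension every representation is — up to the finitely many exceptional orbits — a specialisation of one of finitely many one-parameter $K[x]$-families produced by the unravelings; this yields precisely the bimodules $M_1,\dots,M_{m_n}$ demanded by the definition of tameness, with finite type appearing as the degenerate case where no $x$-parameter is needed. Or the minimal bocs contains a sub-configuration realising the matrix problem of two unrelated maps, equivalently the module category of the free algebra $K\langle X,Y\rangle$ on two generators, from which one reads off an exact, indecomposability- and iso-class-preserving embedding $\_\otimes_{K\langle X,Y\rangle}M$, i.e. wildness.

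The hard part, and the technical heart of Drozd's argument, is exactly this classification of minimal bocses into these two types: one must prove that \emph{no third possibility} occurs — that bounded growth and the wild embedding are exhaustive — and that the reductions preserve representation type in both directions. Controlling the termination of the unraveling steps (so that the one-parameter families stay finite in number per dimension) and verifying that the wild sub-configuration survives every reduction both require delicate bookkeeping of the layer filtration; this is where I expect the real difficulty to lie. Mutual exclusivity of the two types, finally, would follow from a parameter-counting obstruction: a wild embedding forces, in infinitely many dimensions, families of indecomposables depending on two or more parameters, which is incompatible with the one-parameter bound guaranteed by tameness (made precise through Crawley-Boevey's generic-module characterisation of tame algebras).
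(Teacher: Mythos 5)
The paper does not prove this statement at all: Theorem \ref{dichotomy} is Drozd's tame--wild dichotomy, imported verbatim with the citation \cite{Dr} and used downstream as a black box (e.g.\ in Corollary \ref{criterionWild}). So your attempt can only be measured against the literature, and there your roadmap is the historically correct one: the Drozd--Kleiner--Roiter passage from $\rep_K\A$ to representations of a bocs, the reduction moves (regularization, deletion, edge/loop reduction, unraveling), termination in each fixed dimension, the analysis of minimal bocses, and a parameter count for exclusivity --- this is exactly the architecture of Drozd's proof as later streamlined by Crawley-Boevey (\emph{On tame algebras and bocses}, 1988).

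As a proof, however, the proposal is a table of contents rather than an argument, and the gaps sit exactly where you yourself locate them. (i) Everything rests on the bocs carrying a \emph{free triangular (layered)} structure: without it the differential need not admit the normal forms that make regularization and unraveling well defined; you neither construct such a layer for the bocs attached to a minimal projective presentation nor verify that each reduction move preserves it. That verification, together with the proof that each move induces an equivalence preserving tameness and wildness in both directions and strictly decreases a well-founded size invariant, \emph{is} the theorem --- stating that it is ``the hard part'' does not discharge it. (ii) The classification of minimal bocses (solid part reduced to vertices carrying at most a loop with vanishing differential, yielding categories built from $K$ and localizations $K[x]_f$, hence tame; any other configuration realizing a two-map matrix problem, hence wild) is asserted, not proved, and one must additionally check that a wild sub-configuration survives all subsequent reductions. (iii) For mutual exclusivity, invoking Crawley-Boevey's generic-module characterization is anachronistic and risks circularity, since its proof itself runs through the same bocs machinery; the standard direct argument instead pushes the two-parameter family of one-dimensional $K\langle X,Y\rangle$-modules $(X,Y)\mapsto(\lambda,\mu)$ through the wild bimodule to obtain, in a \emph{single} fixed dimension, a two-parameter algebraic family of pairwise non-isomorphic indecomposables, and then a Chevalley constructible-image and dimension count shows this cannot be exhausted by finitely many one-parameter families plus finitely many exceptional modules (with extra care when $K$ is countable). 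Finally, the standing hypothesis $K=\overline{K}$ is essential --- the classification of minimal bocses uses it through the structure of the occurring endomorphism rings --- and should enter your argument explicitly rather than tacitly.
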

The notion of a tame algebra $\A$ yields that there are at most one-parameter families of pairwise non-isomorphic indecomposable $\A$-modules; in the wild case there are parameter families of arbitrary many parameters.\\[1ex]
 A finite-dimensional $K$-algebra is called of \textit{finite growth}, if there is a natural number $m$, such that the indecomposable finite-dimensional modules occur in each dimension $d\geq 2$ in a finite number of discrete and at most $m$ one-parameter families.
\subsubsection{Criteria (via quadratic forms)}\label{quadforms}
For a triangular algebra $\A = K\Q/I$, the \textit{Tits form} $q_{\A}:\mathbf{Z}^{\Q_0}\rightarrow \mathbf{Z}$  is the integral quadratic form defined by 
\[q_{\A}(v) = \sum_{i\in\Q_0} v_i^2 - \sum_{\alpha:i\rightarrow j\in\Q_1} v_iv_j + \sum_{i,j\in\Q_0} r(i,j)v_iv_j;\]
here $r(i,j)$ equals the number of elements in $R\cap I(i,j)$ whenever $R$ is a minimal set of generators of $I$, such that $R\subseteq \bigcup_{i,j\in\Q_0} I(i,j)$. The corresponding symmetric bilinear form is denoted $b_{\A}(\_,\_)$ and fulfills the condition $q_{\A}(v+w)=q_{\A}(v)+b_{\A}(v,w) +q_{\A}(w)$.\\[1ex]
Any non-zero vector $v\in \mathbf{N}^{\Q_0}$ is called \textit{positive}. The quadratic form $q_{\A}$ is called \textit{weakly positive}, if $q_{\A}(v) > 0$ for every $v\in\mathbf{N}^{\Q_0}$; and \textit{(weakly) non-negative}, if $q_{\A}(v) \geq 0$ for every $v\in\mathbf{Z}^{\Q_0}$ (or $v\in\mathbf{N}^{\Q_0}$, respectively).\\[1ex]
For a non-negative form $q_{\A}$, the \textit{radical} of $q_{\A}$ is $\rad q_{\A}:=\{u\in\mathbf{Z}^{\Q_0} \mid q_{\A}(u)=0\}$, we call its elements \textit{nullroots}. In a similar manner, we define the \textit{set of isotropic roots} as $\rad^0 q_{\A}:=\{u\in\mathbf{N}^{\Q_0}\mid q_{\A}(u)=0\}$ and the \textit{set of rational isotropic roots} to be $\rad_{\mathbf{Q}}^0 q_{\A}:=\{u\in\mathbf{Q_+}^{\Q_0}\mid q_{\A}(u)=0\}$ (here, $\mathbf{Q_+}$ is the set of non-negative rational numbers). The maximal dimension of a connected halfspace in $\rad_{\mathbf{Q}}^0 q_{\A}$ is the \textit{isotropic corank} $\corank^0 q_{\A}$ of $q_{\A}$.\\[1ex]
The definiteness of the Tits form is closely related to the representation type of $\A$, and there are connections between roots and certain dimension vectors of representations. Many results are, for example, summarized by De la Pe\~na and Skowro\'{n}ski in \cite{DlPS} where all definitions can be found, too. It is well known that $q_{\A}$ is weakly positive if $\A$ is representation-finite and $q_{\A}$ is weakly non-negative if $\A$ is tame. In certain cases, the opposite directions are true, as well. The following criterion for finite representation type is due to Bongartz \cite{Bo4}.
\begin{theorem}\label{criterionFinite}
Let $\A = K\Q/I$ be a triangular algebra, which admits a preprojective component. Then $\A$ is representation-finite if and only if the Tits form $q_{\A}$ is weakly positive. If the equivalent conditions hold true, then the dimension vector function $X\mapsto \dim X$ induces a bijection between the set of isomorphism classes of indecomposable $\A$-modules and the set of positive roots of $q_{\A}$.
\end{theorem}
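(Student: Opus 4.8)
The plan is to establish the two implications of the equivalence separately and then read off the bijection. The forward implication, that representation-finiteness forces $q_{\A}$ to be weakly positive, I would prove geometrically inside the representation variety, needing nothing beyond the set-up of Section~\ref{reptheory}. Fix a nonzero $\df\in\mathbf{N}^{\Q_0}$. For any $M\in R_{\df}\A$ one has $\dim\Orb_M=\dim\GL_{\df}-\dim\End_{\A}(M)=\sum_i d_i^2-\dim\End_{\A}(M)$, while $R_{\df}\A$ is the vanishing locus inside $R_{\df}K\Q$ of the finitely many relation-maps, hence is cut out by at most $\sum_{i,j}r(i,j)d_id_j$ scalar equations, giving the codimension bound
\[
\dim R_{\df}\A\ \geq\ \dim R_{\df}K\Q-\sum_{i,j}r(i,j)d_id_j\ =\ \sum_{\alpha\colon i\to j}d_id_j-\sum_{i,j}r(i,j)d_id_j.
\]
If $\A$ is representation-finite then $R_{\df}\A$ is a finite union of orbits, so it contains a dense orbit $\Orb_M$ and $\dim R_{\df}\A=\dim\Orb_M$. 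Combining the two displays and rearranging yields $q_{\A}(\df)\geq\dim\End_{\A}(M)\geq 1>0$, which is exactly weak positivity.

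For the converse I would use two facts together with the hypothesis that $\A$ admits a preprojective component $\Pi$. The first is combinatorial: since every diagonal coefficient of $q_{\A}$ equals $1$, it is a unit form, and a weakly positive unit form possesses only finitely many positive roots (which are moreover uniformly bounded). The second is representation-theoretic: every indecomposable lying in a preprojective component is directing, hence $\End_{\A}(M)=K$, $\Ext^1_{\A}(M,M)=0$, its dimension vector is a positive root with $q_{\A}(\dimv M)=1$, and $M$ is the unique indecomposable of that dimension vector. Consequently $\dimv$ maps the vertices of $\Pi$ injectively into the finite set of positive roots of $q_{\A}$, so $\Pi$ has only finitely many vertices.

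It remains to turn finiteness of $\Pi$ into representation-finiteness of $\A$ and to produce the bijection. By Auslander's theorem a finite connected component of $\Gamma_{\A}$ already forces $\A$ to be representation-finite with $\Gamma_{\A}=\Pi$; in particular $\Gamma_{\A}$ is acyclic, so \emph{every} indecomposable is directing. Injectivity of $X\mapsto\dimv X$ and the fact that its image consists of positive roots then follow as above. For surjectivity -- that every positive root is realised as some $\dimv X$ -- I would argue by induction along the knitting of $\Pi$, using that the reflections attached to the simple roots generate all positive roots of the (weakly positive) form and that each such root is attained by an indecomposable obtained from a previously constructed one by an Auslander--Reiten translate or a one-point extension.

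I expect the main obstacle to be the surjectivity half of the bijection, together with the identity $q_{\A}(\dimv M)=1$ for directing $M$: when $\A$ has global dimension greater than $2$ the Tits form no longer coincides with the homological Euler form, so one must check that the higher $\Ext$-contributions do not spoil the value on dimension vectors of directing modules. Verifying that every positive root is genuinely attained, rather than merely bounding their number, is the delicate point, and the finiteness-of-positive-roots statement for weakly positive unit forms is the external input on which the whole converse direction rests.
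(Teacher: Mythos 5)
The paper does not actually prove this statement: it is quoted as Bongartz's criterion with a bare citation to \cite{Bo4}, so your attempt has to be measured against Bongartz's original argument rather than against anything in the text. Your forward implication is complete and is essentially that argument: representation-finiteness plus Krull--Remak--Schmidt gives finitely many orbits in $R_{\df}\A$, hence a dense orbit $\Orb_M$ in a component of maximal dimension, and combining $\dim\Orb_M=\sum_i d_i^2-\dim_K\End_{\A}(M)$ with the Krull codimension bound from the at most $\sum_{i,j}r(i,j)d_id_j$ scalar relation equations yields $q_{\A}(\df)\geq\dim_K\End_{\A}(M)\geq 1$. No objection there.

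The converse contains two genuine gaps, one of which you flag yourself without closing it. First, to make $\Pi$ finite you need $\dimv$ to land in the finite set of positive roots, i.e.\ you need the exact identity $q_{\A}(\dimv M)=1$; but the geometry you have set up only gives the lower bound: $\Ext^1_{\A}(M,M)=0$ makes $\Orb_M$ open, whence $q_{\A}(\dimv M)\geq\dim_K\End_{\A}(M)=1$, and under weak positivity the set $\{v>0\mid q_{\A}(v)\geq 1\}$ is the entire positive cone, so this inequality proves nothing about finiteness. The missing upper bound is where \cite{Bo4} does real work: one needs that for a triangular algebra a minimal set of relations satisfies $r(i,j)=\dim_K\Ext^2_{\A}(S_i,S_j)$ (identifying the correction term of the Tits form homologically), together with an argument special to directing modules -- for instance that the support of a directing module is convex with tilted support algebra, hence of global dimension at most $2$, where the Tits form agrees with the Euler form and $q_{\A}(\dimv M)=\dim\End(M)-\dim\Ext^1(M,M)+\dim\Ext^2(M,M)=1$. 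You correctly identify the ``higher $\Ext$'' issue as the obstacle, but naming an obstacle is not resolving it, and without it the whole finiteness mechanism stalls. Second, your surjectivity sketch would fail as written: there are no reflection functors for a bound quiver algebra $K\Q/I$, the Auslander--Reiten translate does not realize reflections on dimension vectors of $q_{\A}$, and one-point extensions change the algebra rather than produce modules over the fixed $\A$; the assertion that every positive root of a weakly positive unit form is reached from a simple root by moves of this kind is not available in this generality. Bongartz obtains the bijection by a separate induction, and in your write-up this half remains an announced plan. In summary: forward direction correct and complete; converse correctly structured (finiteness of positive roots of a weakly positive unit form, $\End(M)=K$ and uniqueness of a preprojective indecomposable with given dimension vector, Auslander's theorem on finite components) but with the root identity and the surjectivity genuinely open -- exactly the two points carrying the weight of \cite{Bo4}.
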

Assume that $\Gamma_{\A}$ has a preprojective component. The algebra $\A$ is called  \textit{critical} if $q_{\A}$ is not weakly positive, but every proper restriction of $q_{\A}$ is weakly positive. The term "critical" is actually intuitive, since the conditions of Theorem \ref{criterionFinite} are equivalent to $\A$ not having a convex subcategories which is critical; a classification of the critical algebras can be found in \cite{Bo5,HaVo}.\\[1ex]
 The algebra $\A$ is called \textit{hypercritical} if $q_{\A}$ is not weakly non-negative while every proper restriction of $q_{\A}$ is weakly non-negative. The hypercritical algebras have been classified in Unger's list \cite{Un}. For strongly simply connected algebras, they turn out to be the minimal wild algebras as the following classification of tame types due to of Br\"ustle, De la Pe\~na and Skowro{\'n}ski yields \cite{BdlPS}.
 \begin{theorem}\label{criterionTame}
 Let $\A$ be strongly simply connnected. Then  the following are equivalent:
\begin{enumerate}
\item $\A$ is tame;
\item $q_{\A}$ is weakly non-negative;
\item $\A$ does not contain a full convex subcategory which is hypercritical.
\end{enumerate}
\end{theorem}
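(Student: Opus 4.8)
\section*{Proof proposal}

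The three conditions split into one direction that is essentially free, one combinatorial equivalence, and one deep representation-theoretic equivalence; the plan is to isolate these three pieces and then assemble them through Drozd's dichotomy. The implication $(1)\Rightarrow(2)$ requires nothing new: it is recalled just before Theorem \ref{criterionFinite} that the Tits form of any tame algebra is weakly non-negative, so this direction is already available.

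For the equivalence $(2)\Leftrightarrow(3)$ I would work entirely with quadratic forms. The starting point is that, for a triangular algebra $\A$ and a full convex subcategory $\B=K\Q'/I'$ with $I'=\langle I(i,j)\mid i,j\in\Q'\rangle$, the Tits form $q_{\B}$ is exactly the restriction of $q_{\A}$ to $\mathbf{Z}^{\Q'_0}\subseteq\mathbf{Z}^{\Q_0}$: convexity forces every path between two vertices of $\Q'$ to stay inside $\Q'$, so the arrow counts and the relation numbers $r(i,j)$ for $i,j\in\Q'_0$ agree whether computed in $\A$ or in $\B$. Granting this, $(2)\Rightarrow(3)$ is the easy contrapositive, since a hypercritical convex subcategory $\B$ has by definition a non-weakly-non-negative $q_{\B}$, which is a restriction of $q_{\A}$. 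For $(3)\Rightarrow(2)$ I would again argue by contraposition: if $q_{\A}$ is not weakly non-negative, choose $v\in\mathbf{N}^{\Q_0}$ with $q_{\A}(v)<0$ of minimal support, so that the restriction of $q_{\A}$ to $\supp(v)$ is a minimal non-weakly-non-negative unit form, and then combine the combinatorial classification of such minimal forms with strong simple connectedness to realize this obstruction as an honest hypercritical full convex subcategory (Unger's list \cite{Un}). The role of strong simple connectedness is precisely to guarantee that the relevant combinatorial subform is carried by a convex subcategory, every such subcategory satisfying the separation condition.

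It remains to link the quadratic form to the representation type, i.e.\ to prove $(2)\Rightarrow(1)$. Here I would invoke Drozd's dichotomy (Theorem \ref{dichotomy}): it suffices to show that a strongly simply connected algebra with weakly non-negative Tits form cannot be wild. Equivalently, by the equivalence $(2)\Leftrightarrow(3)$ just established, it suffices to show that every wild strongly simply connected algebra contains a hypercritical full convex subcategory, whose Tits form would then contradict weak non-negativity. This is the heart of the matter and the step I expect to be by far the hardest: one must show that the hypercritical algebras, the minimal wild objects among strongly simply connected algebras in Unger's list, genuinely appear as convex subcategories of every wild such algebra. I would attack this through the structure theory of strongly simply connected algebras, reducing a putative wild algebra that avoids all hypercritical convex subcategories, via covering techniques, tilting, and coil and tubular enlargements, to controlled tame building blocks and deriving a contradiction. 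The delicate point, and the main obstacle, is the bookkeeping in this reduction: at each step one must ensure that passing to the relevant subcategory or cover preserves both strong simple connectedness and weak non-negativity of the Tits form, so that the forbidden configurations are neither destroyed nor artificially created.

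Finally I would assemble the pieces. Weak non-negativity gives, by $(2)\Rightarrow(3)$, the absence of a hypercritical convex subcategory; by the contrapositive of the structural step this forces $\A$ to be non-wild; and Drozd's dichotomy then yields tameness. Together with the free implication $(1)\Rightarrow(2)$ and the combinatorial equivalence $(2)\Leftrightarrow(3)$, this closes the cycle and establishes all three equivalences.
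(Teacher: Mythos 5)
You should first note that the paper contains no proof of Theorem \ref{criterionTame} at all: it is imported verbatim from Br\"ustle--De la Pe\~na--Skowro\'nski \cite{BdlPS}, a long and difficult paper in its own right, so there is no internal argument of the paper to match your sketch against. Judged on its own terms, your outline gets the cheap parts right. The implication $(1)\Rightarrow(2)$ is indeed the classical fact recalled in Section \ref{quadforms}, and your observation that for a full convex subcategory $\B$ of $\A$ the Tits form $q_{\B}$ is the restriction of $q_{\A}$ (because convexity preserves both arrow counts and the relation numbers $r(i,j)$) is correct and does yield $(2)\Rightarrow(3)$ immediately, given that the paper defines hypercritical purely in terms of the quadratic form.

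The genuine gap is that the two remaining implications are announced rather than proved, and the central one is circular as written. For $(3)\Rightarrow(2)$, choosing $v\in\mathbf{N}^{\Q_0}$ with $q_{\A}(v)<0$ of minimal support does not directly produce a hypercritical \emph{convex} subcategory: the support of $v$ need not be convex, and after passing to its convex closure you control $q(v)$ but lose the minimality that is supposed to make every proper restriction weakly non-negative; the correct argument inducts over convex subcategories (take a convex subcategory minimal with non-weakly-non-negative form), not over supports, and even then matching the result against Unger's list \cite{Un} is work you have not done. More seriously, $(2)\Rightarrow(1)$ --- that a strongly simply connected algebra with weakly non-negative Tits form cannot be wild, equivalently that every wild strongly simply connected algebra contains a hypercritical convex subcategory --- is not a step \emph{towards} the theorem of \cite{BdlPS}; it \emph{is} that theorem. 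Your sentence invoking ``covering techniques, tilting, and coil and tubular enlargements'' names the right toolbox (the actual proof proceeds via tame coil and tubular enlargements of tame concealed algebras and occupies most of \cite{BdlPS}), but executes none of it, and combining Drozd's dichotomy (Theorem \ref{dichotomy}) with an unproved ``weakly non-negative implies not wild'' leaves the heart of the statement as a black box. So your proposal is a correct high-level map of how the literature proves this result, but it is not a proof; within the logic of the present paper the honest move is the one the author makes, namely citation.
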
 
 Theorem \ref{criterionTame} and Theorem \ref{dichotomy} yield a sufficient criterion for wildness.
\begin{corollary}\label{criterionWild}
 Let $\A$ be strongly simply connnected. Whenever there exists $v\in \mathbf{N}^{\Q_0}$, such that $q_{\A}(v)\leq -1$, then $\A$ is of wild representation type.
\end{corollary}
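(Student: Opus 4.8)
The plan is to deduce this directly from the two named results, Theorem \ref{criterionTame} and the Drozd dichotomy of Theorem \ref{dichotomy}, so the argument will be short. The first observation I would make is that the Tits form takes integer values, since $q_{\A}\colon\mathbf{Z}^{\Q_0}\to\mathbf{Z}$ by definition; hence the hypothesis that there exists $v\in\mathbf{N}^{\Q_0}$ with $q_{\A}(v)\le -1$ is simply the sharpest way of recording that $q_{\A}(v)<0$, i.e.\ that $q_{\A}$ fails to be weakly non-negative.

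Next I would invoke the standing hypothesis that $\A$ is strongly simply connected, which is exactly what licenses the equivalences in Theorem \ref{criterionTame}. Since we have just shown that condition (2) of that theorem (weak non-negativity of $q_{\A}$) is violated, the equivalence of (1) and (2) forces condition (1) to fail as well; that is, $\A$ is not tame.

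Finally, I would appeal to Theorem \ref{dichotomy}: every finite-dimensional algebra is either tame or wild. As $\A$ is a finite-dimensional (basic, triangular) algebra that we have shown is not tame, it must be of wild representation type, which is the claim.

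The argument has essentially no obstacle to overcome: all the substance is carried by the cited classification theorems, and the corollary is a purely logical consequence of chaining them. The only points requiring any care are bookkeeping ones, namely confirming that the strong simple connectedness hypothesis is genuinely being used (it is indispensable, since the equivalences of Theorem \ref{criterionTame} can fail for general triangular algebras) and noting the integrality of the form so that ``$\le -1$'' and ``$<0$'' coincide. One could alternatively route the proof through condition (3) of Theorem \ref{criterionTame}, extracting a hypercritical convex subcategory from the negative vector $v$, but that would be a strictly more laborious path to the same conclusion.
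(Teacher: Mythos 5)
Your argument is correct and is precisely the paper's intended proof: the paper introduces the corollary with the remark that Theorem \ref{criterionTame} and Theorem \ref{dichotomy} yield it, i.e.\ the failure of weak non-negativity rules out tameness for a strongly simply connected algebra, and Drozd's dichotomy then forces wildness. Your additional bookkeeping remarks (integrality of $q_{\A}$, indispensability of strong simple connectedness) are accurate and consistent with the paper.
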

By De la Pe\~na \cite{DlP2}, the following description of finite growth is available.
\begin{proposition}\label{criterionFinGrowth}
Let $\A$ be a strongly simply connected algebra. Then $\A$ is of finite growth if and only if $q_{\A}$ is weakly non-negative and $\corank^0 q_{\A}\leq 1$.
\end{proposition}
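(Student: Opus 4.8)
The plan is to prove the two implications separately, using Theorem \ref{criterionTame} as the bridge between the representation-theoretic side (tameness, one-parameter families) and the combinatorial side (weak non-negativity, isotropic corank), and then to account for the \emph{uniform} counting of one-parameter families in terms of the isotropic part of $q_{\A}$. For the forward direction, I would first note that finite growth is by definition a strengthening of tameness: it demands a single bound $m$ on the number of one-parameter families valid in \emph{every} dimension $d\geq 2$, so a finite-growth algebra is in particular tame. Theorem \ref{criterionTame} then yields at once that $q_{\A}$ is weakly non-negative, and the real content of this direction is the inequality $\corank^0 q_{\A}\leq 1$.

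I would establish $\corank^0 q_{\A}\leq 1$ contrapositively. Suppose $\corank^0 q_{\A}\geq 2$, so that $\rad_{\mathbf{Q}}^0 q_{\A}$ contains a connected halfspace of dimension at least two. For a weakly non-negative form the integral isotropic roots are precisely the generic dimension vectors of one-parameter families of indecomposable (regular) modules, organized into tubes; a single ray of such roots behaves like the radical of a tame concealed algebra, whereas a two-dimensional cone behaves like the rank-two radical of a tubular situation. The point is that a two-dimensional isotropic cone forces, in sufficiently large dimension, the number of distinct one-parameter families to grow with $d$ (polynomially, as in the tubular case), which contradicts the uniform bound $m$ in the definition of finite growth. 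Hence $\corank^0 q_{\A}\leq 1$.

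For the converse, weak non-negativity again gives tameness by Theorem \ref{criterionTame}, so it remains to upgrade tameness to finite growth under the hypothesis $\corank^0 q_{\A}\leq 1$. Here I would invoke the structure theory of strongly simply connected tame algebras: via covering techniques and reduction to tame concealed and tubular convex subcategories, the module category is controlled by finitely many separating tubular families together with the preprojective and preinjective parts, and the asymptotic number of one-parameter families is governed by the dimension of the isotropic cone $\rad_{\mathbf{Q}}^0 q_{\A}$. When $\corank^0 q_{\A}\leq 1$ this cone is (up to the connected-halfspace normalization) a single ray, so only finitely many tubular families occur and the one-parameter families in each dimension are uniformly bounded, which is exactly finite growth.

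The main obstacle in both directions is the precise dictionary between the combinatorics of $\rad_{\mathbf{Q}}^0 q_{\A}$ and the counting of one-parameter families in $\rep_K\A$: one must show that the isotropic corank measures exactly the asymptotic number of parameters needed to index the indecomposables, so that $\corank^0 q_{\A}\leq 1$ is equivalent to the uniform finiteness required by finite growth. Making this equivalence rigorous is the hard technical core, and it is precisely what De la Pe\~na's covering-theoretic analysis in \cite{DlP2} supplies; once it is granted, everything else reduces cleanly to the already-quoted tameness criterion of Theorem \ref{criterionTame}.
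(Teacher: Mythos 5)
The paper does not prove this proposition at all: it is imported verbatim from De la Pe\~na \cite{DlP2} (``By De la Pe\~na \cite{DlP2}, the following description of finite growth is available''), so there is no internal argument to compare against, and your proposal -- which by your own closing admission delegates ``the hard technical core'' to that same reference -- is in substance the same treatment as the paper's. The parts of your sketch that stand on their own are fine: finite growth implies tameness directly from the definitions in Section \ref{reptypetheory}, and tameness is equivalent to weak non-negativity of $q_{\A}$ for strongly simply connected algebras by Theorem \ref{criterionTame}. But be careful not to let the heuristic middle read as independent argument: the assertion that the integral isotropic roots of a weakly non-negative form ``are precisely the generic dimension vectors of one-parameter families'' is false as stated (not every positive isotropic root is realized by a family; the correct dictionary runs through tame concealed and tubular convex subcategories and is itself a theorem), and the step from $\corank^0 q_{\A}\geq 2$ to unboundedly many one-parameter families per dimension is exactly the content of De la Pe\~na's corank analysis, not something extractable from Theorem \ref{criterionTame}. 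So as a blind proof your text is acceptable only as an annotated citation -- which happens to be precisely what the paper itself does.
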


 \subsection{Tilted algebras}
 Some of our algebras $\A$ do appear as tilted algebras, that is, there is a hereditary algebra $\B$ and a $\B$-tilting module $T$, such that $\End_{\B}(T) =\A$. In general, there is a sufficient criterion for an algebra to be tilted which reads as follows \cite{Ri}:
\begin{lemma}\label{criterionTiltedSincere}
If $\A$ has a preprojective component which contains an indecomposable sincere representation, then $\A$ is a tilted algebra.
\end{lemma}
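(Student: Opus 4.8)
The plan is to reduce the statement to the \emph{slice characterisation} of tilted algebras due to Ringel \cite{Ri}: a connected algebra is tilted precisely when its module category $\rep_K\A$ contains a \emph{complete slice} $\Sigma$, in which case $T:=\bigoplus_{X\in\Sigma}X$ is a tilting module whose endomorphism algebra $B=\End_{\A}(T)$ is hereditary and from which $\A$ is tilted. Thus it suffices to produce a complete slice inside the given preprojective component $\Pi_{\A}$ (which exists by hypothesis, and contains the sincere indecomposable $M$).

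First I would exploit that every module lying in a preprojective component is \emph{directing}, i.e. sits on no cycle of nonzero non-isomorphisms between indecomposables; hence the hypothesised sincere indecomposable $M$ is a sincere directing module. Using the translation-quiver structure of $\Pi_{\A}$, I would then take the section $\Sigma$ of $\Pi_{\A}$ passing through $M$ --- the convex full subquiver meeting each $\tau$-orbit of $\Pi_{\A}$ exactly once and containing $M$. Sincerity of the slice is then immediate: since $M\in\Sigma$ is already sincere, the module $T=\bigoplus_{X\in\Sigma}X$ has every simple among its composition factors.

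It remains to check the other slice axioms --- convexity in $\rep_K\A$, the condition that every $\tau$-orbit of the whole Auslander--Reiten quiver $\Gamma_{\A}$ (not merely of $\Pi_{\A}$) meets $\Sigma$ exactly once, and the homological conditions (projective dimension at most $1$ on $\Sigma$ together with vanishing of $\Ext^1$ among the slice modules) that make $T$ a tilting module with hereditary endomorphism ring. Here the directedness of $\Pi_{\A}$ controls the absence of oriented cycles and hence the heredity of $B=\End_{\A}(T)$, while the sincerity of $M$ is exactly what guarantees that this section of the component is a complete slice of the whole category $\rep_K\A$. The main obstacle I expect is precisely this last completeness-and-tilting verification: upgrading the purely combinatorial section of the preprojective component to a module-theoretic complete slice, i.e. showing that $T$ really is a tilting module and that $\A\cong\End_B(T')$ for a suitable tilting $B$-module $T'$. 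Once the slice axioms are secured, Ringel's characterisation yields that $\A$ is tilted.
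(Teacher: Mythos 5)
The paper offers no internal proof to compare against: Lemma \ref{criterionTiltedSincere} is quoted as a known sufficient criterion from Ringel \cite{Ri}. Your slice strategy is indeed the standard argument lying behind that citation, and your outline assembles the right ingredients: modules in a preprojective component are directing, one takes the section $\Sigma$ of $\Pi_{\A}$ through the sincere indecomposable $M$, and one invokes the complete-slice characterisation of tilted algebras.

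As a proof, however, your proposal has a genuine gap, and you name it yourself: the ``completeness-and-tilting verification'' that you defer is not a routine check to be dealt with later --- it \emph{is} the content of the lemma, and your sketch never shows where sincerity actually does work beyond making the slice module sincere. Concretely, what is missing: (i) that $T=\bigoplus_{X\in\Sigma}X$ has as many pairwise non-isomorphic summands as there are simple $\A$-modules --- for this you must show that \emph{all} indecomposable projectives lie in $\Pi_{\A}$, which follows from sincerity because $\Hom(P,M)\neq 0$ for every indecomposable projective $P$, while $\rad^{\infty}(-,N)=0$ for $N$ in a preprojective component forces every such $P$ to be a predecessor of $M$ inside $\Pi_{\A}$, and distinct projectives lie in distinct $\tau$-orbits; (ii) $\Ext^1(T,T)\cong D\Hom(T,\tau T)=0$, from the section property and directedness of the component (no path from $\Sigma$ back into $\tau\Sigma$); (iii) $\pdim T\leq 1$, i.e.\ $\Hom(D\A,\tau T)=0$ --- here sincerity enters a second time, dually: $\Hom(M,I)\neq 0$ for every indecomposable injective $I$, so a nonzero map $I\rightarrow\tau X$ with $X\in\Sigma$ would yield a path from $\Sigma$ through $I$ into $\tau\Sigma$, a contradiction; and (iv) heredity of $B=\End_{\A}(T)$ (via the section/mesh structure), after which Brenner--Butler tilting gives $\A\cong\End_B(T)$ with $B$ hereditary, i.e.\ $\A$ tilted. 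Note also that your stated axiom that $\Sigma$ meet every $\tau$-orbit of the whole quiver $\Gamma_{\A}$ exactly once is not the right formulation (orbits in other components never meet $\Sigma$); the correct conditions are sincerity of $T$, path-convexity of $\Sigma$, and the $\Hom$-vanishing in (ii)--(iii). Without steps (i)--(iv), the proposal restates the known theorem rather than proving it.
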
 
 The following lemma provides a classification for certain cases.
 \begin{lemma}\label{criterionTameTilted}
If there is a preprojective component of $\A$ containing all indecomposable projective $\A$-modules, but no injective module, then the following are equivalent: 
\begin{enumerate}
\item $\A$ is tilted of Euclidean type;
\item $\A$ is tame;
\item $q_{\A}$ is non-negative.
\end{enumerate}
\end{lemma}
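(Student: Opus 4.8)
The plan is to show that all three conditions are equivalent to a single structural property, namely that the hereditary algebra from which $\A$ is tilted is of Euclidean type. We may assume $\A$ is connected, treating each connected component separately otherwise. First I would use the hypothesis to pin down the global structure of $\A$. Since the preprojective component $\CC$ contains all indecomposable projectives but no injective, $\A$ cannot be representation-finite: for a connected representation-finite algebra the Auslander--Reiten quiver is connected and finite, so $\CC$ would exhaust $\Gamma_{\A}$ and in particular contain every injective. Hence, by Auslander's theorem that a connected algebra with a finite Auslander--Reiten component is representation-finite, $\CC$ is infinite. An infinite preprojective component containing all projectives of a connected algebra contains a sincere indecomposable module (the dimension vectors grow along the $\tau^{-1}$-orbits and eventually attain full support); applying Lemma \ref{criterionTiltedSincere} then yields that $\A$ is a tilted algebra, say $\A = \End_{\B}(T)$ for a connected hereditary algebra $\B$ and a tilting module $T$. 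As $\A$ is representation-infinite, $\B$ is not of Dynkin type, so $\B$ is either Euclidean or wild.

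Next I would reduce each of the three conditions to the dichotomy \textquotedblleft$\B$ Euclidean versus $\B$ wild\textquotedblright. Condition (1) is, up to the derived-invariance of the type of $\B$, literally the statement that $\B$ is Euclidean. For (3), I would invoke that the Tits form $q_{\A}$ of a tilted algebra is $\mathbf{Z}$-equivalent to the Euler form $q_{\B}$ of the associated hereditary algebra, so $q_{\A}$ is non-negative precisely when $q_{\B}$ is, i.e. precisely when $\B$ is Dynkin or Euclidean; having excluded the Dynkin case, this means exactly $\B$ Euclidean. For (2), I would use the classical fact (see \cite{Ri}) that a tilted algebra of Euclidean type is tame whereas a tilted algebra of wild type is wild; combined with Drozd's dichotomy (Theorem \ref{dichotomy}) this gives that $\A$ is tame if and only if $\B$ is Euclidean. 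Since each of (1), (2) and (3) is thereby equivalent to the single condition \textquotedblleft$\B$ is of Euclidean type\textquotedblright, the three are mutually equivalent.

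The main obstacle is the step that makes $\A$ tilted, i.e. producing a sincere indecomposable inside $\CC$: one must argue that in an infinite preprojective component containing all projectives sincerity is unavoidable, which is where the hypothesis \textquotedblleft no injective\textquotedblright~does its real work, forcing representation-infiniteness and hence an infinite component. The remaining inputs --- the comparison of $q_{\A}$ with the Euler form of $\B$ and the preservation of tame/wild type under tilting --- are standard structural facts about tilted algebras, but they must be quoted in the correct form. In particular, the passage from \emph{weakly} non-negative (which tameness yields in general, cf. Theorem \ref{criterionTame}) to the genuinely stronger \emph{non-negative} in (3) is only possible because the tilted structure identifies $q_{\A}$ with the positive semidefinite Euler form $q_{\B}$ of corank one.
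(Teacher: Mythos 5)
You should first be aware that the paper itself contains no proof of this lemma: it is stated as quoted background from the tilting literature (cf.\ \cite{Ri} and the survey \cite{DlPS}), so your attempt can only be measured against the standard argument. Your overall route --- force $\A$ to be representation-infinite, produce a sincere indecomposable in the preprojective component, conclude via Lemma \ref{criterionTiltedSincere} that $\A$ is tilted, and then reduce all three conditions to the dichotomy \textquotedblleft$\B$ Euclidean versus $\B$ wild\textquotedblright\ --- is indeed how this result is proved in the literature, and the downstream reductions are quoted in the correct form: the Tits form coincides with the Euler form because tilted algebras have global dimension at most $2$, the Euler form is preserved up to $\mathbf{Z}$-equivalence under tilting, Euclidean tilted algebras are tame while wild tilted algebras are wild, and Drozd's dichotomy (Theorem \ref{dichotomy}) closes the loop. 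Your observation that the hypothesis upgrades \emph{weak} non-negativity to genuine non-negativity exactly through this identification is also the right point to stress.

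The genuine gap is the step you yourself flag: the assertion that in an infinite preprojective component containing all indecomposable projectives \textquotedblleft the dimension vectors grow along the $\tau^{-1}$-orbits and eventually attain full support.\textquotedblright\ Neither half of this is automatic, and the second half does not follow from the first. Unboundedness of dimension vectors on such a component already needs an argument (e.g.\ finiteness of the number of $\tau$-orbits --- each module lies in the $\tau$-orbit of a projective --- combined with results of Happel--Preiser--Ringel type excluding components of bounded dimension without periodic modules); but even granted unbounded growth, $\dim X \to \infty$ along a $\tau^{-}$-orbit in no way forces the support of $X$ to eventually contain every vertex: a priori the dimension vectors could blow up while permanently avoiding a fixed vertex, since nonzero maps $P(i)\to X$ are not guaranteed merely by the existence of paths in the component. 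Producing the sincere directing module is precisely the nontrivial content that makes Lemma \ref{criterionTiltedSincere} applicable, and it is where the full hypothesis (all projectives in the component, no injective) must actually be used; note also that your architecture needs the implication \textquotedblleft hypothesis $\Rightarrow$ tilted\textquotedblright\ \emph{unconditionally}, since you reduce each of (1)--(3) to the type of $\B$ --- if a wild non-tilted algebra satisfied the hypothesis, the lemma would still hold vacuously, but your proof would not. So the proposal is correct in strategy but incomplete at exactly its load-bearing step.
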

Especially the notion of a tame concealed algebra comes up in our setup. These are the Euclidean tilted algebras, such that the tilting module is preprojective; also shown to be the minimal representation-infinite, that is, critical algebras \cite{Ri}. The so-called Bongartz-Happel-Vossieck list (BHV-list) lists the tame concealed algebras \cite{Bo5,HaVo}.

 \section{Staircase algebras}\label{SectStaircase}
Let $n\in\mathbf{N}$ and let $\lambda\vdash n$ be an increasing partition of $n$.  Let $Y(\lambda)$ be the Young diagram of $\lambda$, that is, the box-diagram of which the $i$-th row contains $\lambda_i$ boxes. We denote by $\lambda^{T}$ the transposed increasing partition given by the columns of the Young diagram (from right to left).  Starting with $(1,1)$ in the bottom left corner, we number the boxes of $\lambda$ by $(i,j)$ increasing $i$ from bottom to top and $j$ from left to right.\\[1ex]
Corresponding to $Y(\lambda)$, let us define the quiver $\Q(\lambda)$: its vertices are given by the tuples $(i,j)$ appearing in $Y(\lambda)$; the arrows are given by all horizontal arrows $\alpha_{i,j}: (i,j) \rightarrow (i-1,j)$ and all vertical arrows $\beta_{i,j}: (i,j) \rightarrow (i,j-1)$, whenever all these vertices are defined.
The quiver $\Q(\lambda)$ can be easily visualized by turning the Young diagram $90^{o}$ anti-clockwise and drawing arrows from left to right and from top to bottom.\\[1ex]
Let us define the path algebra $\A(\lambda)=K\Q(\lambda)/I$ with relations given by $I:=I(\lambda)$, which is the $2$-sided admissible ideal generated by all commutativity relations in the appearing squares in $\Q(\lambda)$ which are, if defined, of the form $\beta_{i,j}\alpha_{i,j+1} - \alpha_{i-1,j+1}\beta_{i,j+1}$. Since $I$ is admissible and $\Q(\lambda)$ is connected, $\A(\lambda)$ is a basic, connected,  finite-dimensional $K$-algebra.\\[1ex]
We call $n$ the size of $\A(\lambda)$ and $l:=l(\lambda)$ the length of $\A(\lambda)$, if $\lambda=(\lambda_1,...,\lambda_l)$. For easier notation, we merge similar entries of $\lambda$ by potencies, for example $(1,1,2,2,2,6,8,8)=: (1^2,2^3,6,8^2)$. Then the length is given by the number of entries in original notation; the number of entries in potency-notation is called the number of steps $s:=s(\lambda)$ of $\A(\lambda)$. 
Since the quivers $\Q(\lambda)$ look like staircases, the following definition is reasonable.
\begin{definition}\label{defstaircase}
Let $\A$ be a finite-dimensional $K$-algebra. It is called
\begin{itemize}
\item $n$-staircase algebra, if there is an increasing partition $\lambda\vdash n$, such that $\A\cong \A(\lambda)$.
\item staircase algebra, if there exists a natural number $n$, such that $\A$ is an $n$-staircase algebra.
\item $m$-step, if the number of steps equals $m$.
\end{itemize}
\end{definition}
We denote the Tits quadratic form by $q_{\lambda}:=q_{\A(\lambda)}$ and the Auslander-Reiten quiver by $\Gamma(\lambda):=\Gamma_{\A(\lambda)}$.\\[1ex]
Let us consider an example to illustrate these definitions before discussing staircase algebras and their properties in detail.
\begin{example}\label{exStaircase}
 Let $n=7$ and $\lambda=(1,1,2,3)$ and its Young diagram drawn in Example \ref{gradedExample}. Its quiver $\Q(\lambda)$ is given by
\begin{center}\small\begin{tikzpicture}
\matrix (m) [matrix of math nodes, row sep=1.81em,
column sep=1.8em, text height=0.9ex, text depth=0.1ex]
{  &  &  &  (1,3) & &  & & &  \bullet \\
 &   & (2,2) & (1,2)&\hat{=} &  & & \bullet & \bullet \\
(4,1) & (3,1) & (2,1) &  (1,1) & &\bullet &\bullet & \bullet & \bullet\\
 };
\path[->]

(m-2-3) edge node[above]{$\alpha_{2,2}$} (m-2-4)
(m-2-3) edge node[right]{$\beta_{2,2}$} (m-3-3)
(m-1-4) edge node[right]{$\beta_{1,3}$} (m-2-4)
(m-2-4) edge node[right]{$\beta_{1,2}$} (m-3-4)
(m-3-1) edge node[above]{$\alpha_{4,1}$}  (m-3-2)
(m-3-2) edge node[above]{$\alpha_{3,1}$}  (m-3-3)
(m-3-3) edge node[above]{$\alpha_{2,1}$} (m-3-4)
(m-2-8) edge (m-2-9)
(m-2-8) edge (m-3-8)
(m-1-9) edge (m-2-9)
(m-2-9) edge (m-3-9)
(m-3-6) edge  (m-3-7)
(m-3-7) edge (m-3-8)
(m-3-8) edge (m-3-9);
\path[-]
(m-2-8) edge[dotted]  (m-3-9)
;\end{tikzpicture}\end{center} 
The $7$-staircase algebra $\A(\lambda)$ of $3$ steps and of length $4$ is defined by
\[\A(\lambda)= K\Q(\lambda)/(\beta_{1,2}\alpha_{2,2} - \alpha_{2,1}\beta_{2,2}) .\]
\end{example}

\subsection{Link to graded nilpotent pairs}\label{link}
Each graded nilpotent pair $(\varphi, \psi)$ together with a bigraded vector space $V$ as in \ref{gradNilpTheory} can be considered as a representation $M:=M(\varphi, \psi,V)$ of $\A(\lambda(V))$ in a natural way. We denote $\underline{\dim}V:= \underline{\dim}_{\A(\lambda(V))}M$ which depends on the bigrading on $V$, but not on $(\varphi, \psi)$.

\begin{example}
Consider the setup of Example \ref{gradedExample}. Then  the $\A((1^2,2,4))$-representation $M(\varphi, \psi, V)$  is given by
\begin{center}\small\begin{tikzpicture}
\matrix (m) [matrix of math nodes, row sep=1.81em,
column sep=1.8em, text height=0.9ex, text depth=0.1ex]
{ &  &  &  K \\
  &  & K^3 & K^2 \\
 K & K & K^2 & 0 \\
 };
\path[->]
(m-1-4) edge node[right]{$\psi_{1,3}$} (m-2-4)
(m-2-3) edge node[above]{$\varphi_{2,2}$}  (m-2-4)
(m-2-3) edge node[left]{$\psi_{2,2}$} (m-3-3)
(m-2-4) edge  (m-3-4)
(m-3-1) edge node[below]{$\varphi_{4,1}$} (m-3-2)
(m-3-2) edge  node[below]{$\varphi_{3,1}$} (m-3-3)
(m-3-3) edge  (m-3-4)
(m-2-3) edge[-,dotted]  (m-3-4)
;\end{tikzpicture}\end{center} 
\end{example}

 Representation-theoretically speaking, the graded nilpotent pairs of $V$ are encoded in the representation variety $R_{\underline{\dim}V}\A(\lambda(V))$. Therefore, certain criteria can be translated from the representation theory of finite-dimensional algebras straight away.
\begin{theorem}\label{gradedCriterionLambda}
Let $\lambda$ be a partition. 
Modulo Levi-base change, there are only finitely many $\lambda$-graded nilpotent pairs  if and only if  $\A(\lambda)$ is of finite representation type.
Otherwise, there are at most one-parameter families of non-decomposable graded nilpotent pairs if and only if $\A(\lambda)$ is tame.
\end{theorem}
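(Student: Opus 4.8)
The plan is to promote the module-theoretic reformulation sketched in Section~\ref{link} to a precise isomorphism of $\GL_{\df}$-varieties, and then to read off both equivalences from the definitions of representation type together with Drozd's dichotomy (Theorem~\ref{dichotomy}).

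\emph{Step 1 (the bridge).} Fix a bigraded vector space $V$ with $\lambda(V)=\lambda$ and put $\df=\dimv V$. First I would verify that the assignment $(\varphi,\psi)\mapsto M(\varphi,\psi,V)$ identifies the set of $\lambda$-graded nilpotent pairs on $V$ with the \emph{entire} representation variety $R_{\df}\A(\lambda)$, not merely a subset. The decisive observation is that operators compatible with the bigrading are automatically nilpotent: $\varphi$ maps $V_{s,t}\to V_{s-1,t}$ and hence strictly lowers the first index, so some power vanishes, and symmetrically for $\psi$. Thus the only condition beyond respecting the grading is $[\varphi,\psi]=0$, which on each homogeneous piece is exactly the commutativity relation $\beta_{s,t}\alpha_{s,t+1}=\alpha_{s-1,t+1}\beta_{s,t+1}$ of the corresponding square of $\Q(\lambda)$, i.e. a generator of $I(\lambda)$. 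Since base change by the Levi respecting the grading is precisely the conjugation action of $\GL_{\df}=\prod_{s,t}\GL(V_{s,t})$, this identification is $\GL_{\df}$-equivariant, so $\lambda$-graded nilpotent pairs on $V$ modulo Levi-base change correspond bijectively to the orbits $\Orb_M$, hence to isomorphism classes of $\A(\lambda)$-modules of dimension vector $\df$; moreover indecomposable pairs correspond to indecomposable modules.

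\emph{Step 2 (finiteness).} I read ``finitely many $\lambda$-graded nilpotent pairs'' as: for \emph{every} bigraded $V$ with $\lambda(V)=\lambda$ there are finitely many pairs modulo Levi-base change. If $\A(\lambda)$ is representation-finite, then by Krull--Remak--Schmidt every dimension vector admits only finitely many isomorphism classes (finitely many indecomposables, finitely many ways to split $\df$), so by Step~1 each such $V$ carries finitely many pairs. Conversely, if $\A(\lambda)$ is representation-infinite there is some $\df_0$ with infinitely many isomorphism classes. To realize this by a genuine shape-$\lambda$ grading I would adjoin the sincere ``all-ones'' representation $S_0$ (place $K$ at every vertex and the identity on every arrow, so that the relations read $1=1$), producing infinitely many pairwise non-isomorphic modules of the sincere dimension vector $\df_0+\mathbf{1}$, whose support is all of $Y(\lambda)$. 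The associated $V$ satisfies $\lambda(V)=\lambda$ and carries infinitely many pairs, which yields the first equivalence.

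\emph{Step 3 (tame).} By Step~1 the non-decomposable pairs correspond to indecomposable modules. If $\A(\lambda)$ is tame then, by definition, its indecomposables fall into at most one-parameter families; restricting to the dimension vectors compatible with a shape-$\lambda$ grading only shrinks this collection, so the pairs occur in at most one-parameter families. For the converse I would argue contrapositively via Theorem~\ref{dichotomy}: if $\A(\lambda)$ is not tame it is wild, and one must exhibit at least two-parameter families of \emph{indecomposable} pairs, equivalently of indecomposable modules with a shape-$\lambda$ dimension vector.

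\emph{Main obstacle.} The delicate point is exactly this last realization, and it is where I expect the real work to lie. The padding trick of Step~2 cannot be reused, since adjoining $S_0$ as a direct summand destroys indecomposability; one instead needs the wild two-parameter families to live already on dimension vectors whose support fills out $Y(\lambda)$. I would dispose of this either by a general sincerity argument — a connected wild algebra admits wild families on sincere dimension vectors, which one arranges by gluing $S_0$ into the family through a one-point-extension/interlacing construction rather than a direct sum — or, in line with the organization of the later sections, by the explicit two-parameter families of indecomposable representations constructed case-by-case for each wild $\A(\lambda)$. Everything else reduces to bookkeeping around the equivariant identification of Step~1.
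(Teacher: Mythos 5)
Your proposal is correct, and it is essentially the route the paper takes implicitly: the paper never writes out a proof of Theorem \ref{gradedCriterionLambda}, treating it as an immediate consequence of the dictionary of Section \ref{link} identifying graded nilpotent pairs on $V$ with points of $R_{\underline{\dim}V}\A(\lambda(V))$ and Levi-base change with the $\GL_{\df}$-action (cf.\ Lemma \ref{gradedFiniteDim} and its corollary). What you add beyond the paper is worthwhile: the observation that grading-compatibility forces nilpotency (so the correspondence hits \emph{all} of $R_{\df}\A(\lambda)$, matching the fact that $\Q(\lambda)$ has no oriented cycles), and the $S_0$-padding argument ensuring the infinite family is realized on a $V$ with $\lambda(V)$ \emph{exactly} $\lambda$ rather than a subpartition --- a point the paper glosses over entirely. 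Two of your steps can be closed with the paper's own tools rather than outside machinery. First, your assertion in Step 2 that representation-infiniteness yields a single dimension vector $\df_0$ with infinitely many isomorphism classes is the second Brauer--Thrall theorem (valid here since $K=\overline{K}$), but it is cheaper in this setting to invoke Theorem \ref{criterionFinite}: since $\A(\lambda)$ has a preprojective component (Lemma \ref{existencepreproj}), representation-infiniteness means $q_\lambda$ is not weakly positive, so $\A(\lambda)$ contains a critical, i.e.\ tame concealed, convex subcategory, whose minimal nullroot carries a one-parameter family (Section \ref{TC}); pad that with $S_0$ as you propose. Second, the obstacle you flag in Step 3 --- sincere two-parameter families of indecomposables in the wild case, where direct-sum padding fails --- is exactly what Section \ref{minWildCases} supplies: the source-vertex embedding $K\hookrightarrow M'_y$ produces $\mathbf{P}^n$-families of pairwise non-isomorphic \emph{indecomposables} (Remark \ref{explicit2Param}), and since every outer box added to a staircase quiver is again a source, this construction can be iterated along outer corners of $Y(\lambda)$ to fill out the support, which is your ``one-point-extension'' route made concrete. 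So nothing in your argument fails; the only caveat is that your Step 3 converse is a sketch whose completion rests on these case-by-case constructions --- a debt the paper itself also defers to Section \ref{SectCasestudy}.
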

\begin{lemma}\label{gradedFiniteDim}
The number of graded nilpotent pairs (up to Levi-base change) of the fixed bigraded vector space $V$ is finite if and only if $R_{\underline{\dim}V}\A(\lambda(V))$ admits only finitely many $\GL_{\underline{\dim}V}$-orbits.
\end{lemma}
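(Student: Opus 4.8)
The plan is to upgrade the assignment $(\varphi,\psi)\mapsto M(\varphi,\psi,V)$ from Section \ref{link} into an explicit bijection between the set of graded nilpotent pairs on the fixed bigraded space $V$ and the set of points of the representation variety $R_{\underline{\dim}V}\A(\lambda(V))$, and then to check that this bijection intertwines Levi-base change on the left with the $\GL_{\underline{\dim}V}$-action on the right. Once this equivariant bijection is in place, orbits correspond to orbits, and the two finiteness statements become literally the same assertion.

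First I would fix the identifications $V_{s,t}\cong K^{d_{s,t}}$ with $d_{s,t}=\dim_K V_{s,t}$, so that $\underline{\dim}V$ is by definition the dimension vector of $\A(\lambda(V))$ carrying these numbers at the vertices $(s,t)\in\sh(V)$. A graded nilpotent pair respects the bigrading, so each of $\varphi,\psi$ is completely determined by its components $\varphi_{s,t}\colon V_{s,t}\to V_{s-1,t}$ and $\psi_{s,t}\colon V_{s,t}\to V_{s,t-1}$; conversely any choice of such component maps assembles into grading-respecting operators $\varphi=\bigoplus_{s,t}\varphi_{s,t}$ and $\psi=\bigoplus_{s,t}\psi_{s,t}$ on $V$. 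Under the identification $\varphi_{s,t}\leftrightarrow M_{\alpha_{s,t}}$ and $\psi_{s,t}\leftrightarrow M_{\beta_{s,t}}$, the commutator condition $[\varphi,\psi]=0$ read off on each $V_{s,t}$ becomes exactly the square-commutativity relation generating $I(\lambda)$ (compare the single relation in Example \ref{exStaircase}), so commuting pairs correspond precisely to tuples $(M_\alpha,M_\beta)$ satisfying the relations of $\A(\lambda(V))$, that is, to points of $R_{\underline{\dim}V}\A(\lambda(V))$.

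The one point that needs a remark is that nilpotency need not be imposed separately: since $\varphi$ strictly lowers the first grading index and $\psi$ strictly lowers the second, and the Young diagram $\sh(V)$ has finite extent, both assembled operators are \emph{automatically} nilpotent. Hence the nilpotency built into the definition of $\CC(V)$ is free, and the correspondence of the previous paragraph is a genuine bijection between graded nilpotent pairs on $V$ and points of $R_{\underline{\dim}V}\A(\lambda(V))$ --- no pairs are lost and none are added.

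It then remains to match the two group actions. The Levi subgroup respecting the grading is $L=\prod_{(s,t)\in\sh(V)}\GL(V_{s,t})$, which under our identifications is precisely $\GL_{\underline{\dim}V}=\prod_{(s,t)}\GL_{d_{s,t}}$. Base change by $g=(g_{s,t})\in L$ sends $\varphi_{s,t}$ to $g_{s-1,t}\varphi_{s,t}g_{s,t}^{-1}$ and $\psi_{s,t}$ to $g_{s,t-1}\psi_{s,t}g_{s,t}^{-1}$, which is verbatim the base-change action of $\GL_{\underline{\dim}V}$ on the representation variety described in Section \ref{reptheory}. Thus the bijection is $L$-equivariant, so it descends to a bijection between Levi-orbits of graded nilpotent pairs and $\GL_{\underline{\dim}V}$-orbits on $R_{\underline{\dim}V}\A(\lambda(V))$ (equivalently, isomorphism classes of representations of dimension vector $\underline{\dim}V$). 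Finiteness of one orbit set is therefore equivalent to finiteness of the other, which is the claim. The only genuinely delicate step is the bookkeeping in the second paragraph: checking that the commutator $[\varphi,\psi]$, evaluated component-wise, reproduces exactly the generators of $I(\lambda)$ and nothing more, so that commuting pairs and representations of $\A(\lambda(V))$ are in honest bijection rather than merely one being contained in the other.
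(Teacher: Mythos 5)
Your proposal is correct and takes exactly the route the paper intends: the paper states Lemma \ref{gradedFiniteDim} without an explicit proof, resting on the correspondence $(\varphi,\psi)\mapsto M(\varphi,\psi,V)$ of Section \ref{link}, and your write-up simply makes that correspondence precise as a $\GL_{\underline{\dim}V}$-equivariant bijection between graded nilpotent pairs on $V$ and points of $R_{\underline{\dim}V}\A(\lambda(V))$. The two details you single out --- that nilpotency is automatic because $\varphi$ and $\psi$ strictly lower a grading index on a diagram of finite extent, and that the componentwise commutator $[\varphi,\psi]=0$ reproduces exactly the square-commutativity generators of $I(\lambda)$ (using that $\sh(V)$ is a Young diagram, so every relevant square is present) --- are precisely the points that need checking, and both are verified correctly.
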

\begin{corollary}
The equivalent conditions of Lemma \ref{gradedFiniteDim} hold true if and only if the number of isomorphism classes of representations in $\rep_K\A(\lambda(V))(\underline{\dim} V)$ is finite.
\end{corollary}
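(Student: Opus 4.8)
The plan is to deduce the statement directly from the dictionary between the base-change action on the representation variety and the isomorphism classes of representations, which was recorded in Section~\ref{reptheory}. Writing $\df := \underline{\dim}V$ and $\A := \A(\lambda(V))$ for brevity, the first step is to recall that the $\GL_{\df}$-orbits $\Orb_M$ of the base-change action on $R_{\df}\A$ are in bijection with the isomorphism classes of representations $M$ in $\rep_K\A(\df)$. Two points $m, m'$ of $R_{\df}\A$ lie in the same $\GL_{\df}$-orbit precisely when the associated representations $M, M'$ (both with underlying vector spaces $K^{d_i}$) are isomorphic, since an element of $\GL_{\df}=\prod_{i}\GL_{d_i}$ transporting one tuple of structure maps into the other is exactly a collection of isomorphisms $(f_i\colon M_i\to M'_i)$ intertwining the maps along every arrow.

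Given this bijection, the second step is immediate: the set of $\GL_{\df}$-orbits of $R_{\df}\A$ is finite if and only if the set of isomorphism classes in $\rep_K\A(\df)$ is finite, as these two sets are in one-to-one correspondence and finiteness is a property preserved under bijections in both directions.

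The final step is to combine this equivalence with Lemma~\ref{gradedFiniteDim}, which identifies finiteness of the number of graded nilpotent pairs of $V$ modulo Levi-base change with finiteness of the number of $\GL_{\df}$-orbits on $R_{\df}\A$. Chaining the two equivalences then yields the claim.

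I do not expect any genuine obstacle here: the corollary is a formal unwinding of the orbit–isomorphism-class correspondence and carries no content beyond Lemma~\ref{gradedFiniteDim} itself. The only point that warrants a sentence of care is making explicit that ``isomorphism classes of representations of dimension vector $\df$'' and ``$\GL_{\df}$-orbits on $R_{\df}\A$'' really index the same objects, rather than merely being related by a correspondence that could a priori distort cardinalities; but since the correspondence is an honest bijection, finiteness transfers in both directions without further argument.
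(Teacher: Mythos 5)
Your proof is correct and matches the paper's (implicit) argument exactly: the corollary is stated without proof precisely because it is the formal combination of Lemma~\ref{gradedFiniteDim} with the orbit--isomorphism-class bijection for $\GL_{\df}$ acting on $R_{\df}\A$ recorded in Section~\ref{reptheory}. Nothing further is needed.
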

These criteria motivate the classification of representation types of staircase algebras in Section \ref{SectRepTypes} and certain further considerations in Section \ref{SectCasestudy}. To start, we study general properties of staircase algebras.
\subsection{General properties}\label{properties}
For $(i,j)\in \Q(\lambda)_0$, let $S(i,j)$ be the standard simple representation at the vertex $(i,j)$ of $\A(\lambda)$. The (isomorphism classes of the) projective indecomposables of $\A(\lambda)$ are parametrized by $P(i,j)$, $(i,j)\in \Q(\lambda)_0$, which are given by
 \[P(i,j)_{k,l}=\left\lbrace
\begin{array}{ll}
K & {\rm if}~ k\leq i ~{\rm and}~ l\leq j, \\ 
0 & {\rm otherwise}.
                                                               \end{array}
 \right. \]
together with identity and zero maps, accordingly.
\begin{proposition}\label{basicproperties}
The algebra $\A(\lambda)$ is triangular, fulfills the separation condition and is strongly simply connected.
\end{proposition}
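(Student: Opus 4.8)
The plan is to treat the three assertions in order, exploiting throughout a single structural observation: since $Y(\lambda)$ is a Young diagram, the vertex set $\Q(\lambda)_0$ is a finite order ideal (down-set) in $\mathbf{Z}_{\geq 1}^2$ with the componentwise order, i.e. $(i,j)\in\Q(\lambda)_0$ and $(k,l)\leq(i,j)$ force $(k,l)\in\Q(\lambda)_0$. Every arrow $\alpha_{i,j}$ or $\beta_{i,j}$ lowers exactly one coordinate by one, so $\A(\lambda)$ is just the incidence algebra of this poset. Triangularity is then immediate: the function $(i,j)\mapsto i+j$ strictly decreases along every arrow, hence along every path, so $\Q(\lambda)$ has no oriented cycle.

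For the separation condition I would first record the projectives explicitly as in the statement preceding the proposition: $P(i,j)$ is $K$ on the interval $\{(k,l)\leq(i,j)\}$ with all structure maps equal to the identity (the square commutativity relations then hold automatically). Consequently $\rad P(i,j)$ is $K$ on $D(i,j):=\{(k,l)\leq(i,j)\}\setminus\{(i,j)\}$ with identity maps. The key point is that this support is connected as an undirected graph (a rectangle with one corner deleted, a segment, or empty), so any endomorphism acts by a common scalar on all vertices; hence $\End\bigl(\rad P(i,j)\bigr)$ is local and $\rad P(i,j)$ is indecomposable (or zero). As there are never two non-isomorphic radical summands, the separation condition holds vacuously.

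For strong simple connectedness I would invoke Skowro\'nski's criterion quoted above (\cite{Sko2}): since $\A(\lambda)$ is triangular, it suffices to verify the separation condition for every convex subcategory $\B=K\Q'/I'$. Unwinding the definition, the vertex set $W$ of $\Q'$ is precisely an order-convex subset of $\Q(\lambda)_0$ (if $u,w\in W$ with $w\leq u$ then the whole interval $[w,u]\subseteq W$), and the projective $P^{\B}(i,j)$ is $K$ on $W\cap\{(k,l)\leq(i,j)\}$ with identity maps. Therefore the indecomposable summands of $\rad P^{\B}(i,j)$ correspond bijectively to the connected components of $W\cap D(i,j)$, and distinct components yield non-isomorphic summands. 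Writing $U:=\{v\geq(i,j)\}$ for the upper-right quadrant, the separation subquiver is $\tilde\Q:=W\setminus U$, and what remains is to show that distinct components of $W\cap D(i,j)$ lie in distinct components of $\tilde\Q$.

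This planar separation statement is the step I expect to be the main obstacle. I would prove that any walk in $\tilde\Q$ joining two vertices of $W\cap D(i,j)$ actually stays inside a single component of $W\cap D(i,j)$. The method is to cut the walk into maximal pieces lying in $D(i,j)$ and maximal \emph{excursions} into the two quadrants incomparable to $(i,j)$, namely the upper-left quadrant $\{a<i,\,b>j\}$ and the lower-right quadrant $\{a>i,\,b<j\}$. A single excursion cannot pass from one incomparable quadrant to the other, because any step crossing the line $a=i$ or $b=j$ lands either in $U$ (excluded from $\tilde\Q$) or in $D(i,j)$ (ending the excursion); hence each excursion is confined to one quadrant, and a short case check shows its two endpoints on the boundary of $D(i,j)$ lie on a common horizontal segment $\{(\,\cdot\,,j)\}$ (upper-left case) or vertical segment $\{(i,\,\cdot\,)\}$ (lower-right case). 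Order-convexity of $W$ then forces this entire segment into $W\cap D(i,j)$, so the excursion returns to the very component it left. Iterating along the walk shows its endpoints lie in one component, which proves the claim, hence the separation condition for $\B$; by Skowro\'nski's criterion $\A(\lambda)$ is strongly simply connected.
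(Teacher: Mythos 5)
Your proof is correct and takes essentially the same route as the paper: triangularity from acyclicity of $\Q(\lambda)$, the separation condition for $\A(\lambda)$ itself from indecomposability of the thin, connected-support radicals, and strong simple connectedness via Skowro\'nski's criterion by verifying the separation condition for every convex subcategory, whose projectives and radical summands you describe exactly as the paper does (there the dichotomy is phrased as: the radical has two summands precisely when $(i-1,j-1)$ is missing). The only difference is rigor, to your credit: your quadrant/excursion argument supplies a genuine proof of the final planar separation statement, which the paper's proof simply declares clear (``It is clear that $\Q'$ decomposes into two disjoint quivers\dots'').
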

\begin{proof}
Since $\Q(\lambda)$ does not contain oriented cycles, $\A(\lambda)$ is triangular.\\ The radical of every projective indecomposable is indecomposable, such that each projective indecomposable has separated radical and $\A(\lambda)$ fulfills the separation condition.\\ 
Let $\lambda$ be an increasing partition and consider a convex subcategory $C$ of $A(\lambda)$. We aim to show that $C$ fulfills the separation condition. 

The radical of every projective $P$ of the vertex $(i,j)$ is either indecomposable or decomposes into exactly two indecomposables. The latter is the case if and only if $(i-1,j-1)$ is not a vertex of the quiver $\Q$ of $C$. Let $\Q'$ be the subquiver of $\Q$ which is obtained by deleting all start vertices of all paths ending in $(i,j)$. It is clear that $\Q'$ decomposes into two disjoint quivers, these correspond to the supports of the two indecomposable direct summands of $\rad P$.  Thus, $C$ fulfills the separation condition. The algebra  $\A(\lambda)$ is, thus, strongly simply connected.
\end{proof}
We approach the module categories by general techniques. Since Proposition \ref{basicproperties} states that $\A(\lambda)$ fulfills the separation condition, the following lemma follows from Section \ref{reptheory}).
\begin{lemma}\label{existencepreproj}
The algebra $\A(\lambda)$ admits a preprojective component $\Pi(\lambda):= \Pi_{\A(\lambda)}$.
\end{lemma}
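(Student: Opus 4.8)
The plan is to invoke directly the implication recorded in Section~\ref{reptheory}: any finite-dimensional algebra that fulfills the separation condition admits a preprojective component (Bongartz \cite{Bo4}). Since the substantive work has already been carried out in Proposition~\ref{basicproperties}, the argument for this lemma reduces to an application of that criterion, and I would present it as such.

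First I would recall from Proposition~\ref{basicproperties} that $\A(\lambda)$ is triangular and, crucially, that every projective indecomposable $P(i,j)$ has separated radical, so that $\A(\lambda)$ fulfills the separation condition. Concretely, $\rad P(i,j)$ is either indecomposable or splits into exactly two indecomposable summands, the latter occurring precisely when $(i-1,j-1)$ is not a vertex of $\Q(\lambda)$; in the decomposable case the supports of the two summands lie in the two connected components of the quiver obtained by deleting the start vertices of all paths ending in $(i,j)$. I would then apply the cited result to conclude that the Auslander--Reiten quiver $\Gamma(\lambda)$ possesses a component $\Pi(\lambda):=\Pi_{\A(\lambda)}$ which is preprojective in the sense of Section~\ref{reptheory}, namely it contains no oriented cycles and every module in it lies in the $\tau$-orbit of some projective. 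This is exactly the asserted component.

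The main obstacle is essentially absent: all the genuine content sits in the verification of the separation condition, which is already established in Proposition~\ref{basicproperties}. The only point deserving a little care is a bookkeeping check, namely confirming that the separation condition as verified for $\A(\lambda)$ matches the hypothesis of Bongartz's criterion verbatim, and that the component produced is genuinely preprojective (every vertex in the $\tau$-orbit of a projective) rather than merely directing; both match the definitions recorded in Section~\ref{reptheory}, so no additional estimate or construction is required.
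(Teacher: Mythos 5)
Your proposal is correct and coincides with the paper's own argument: the lemma is stated immediately after the observation that Proposition~\ref{basicproperties} establishes the separation condition for $\A(\lambda)$, and the existence of a preprojective component then follows from Bongartz's criterion \cite{Bo4} as recalled in Section~\ref{reptheory}. Your additional remarks on the structure of $\rad P(i,j)$ simply restate the verification already contained in Proposition~\ref{basicproperties}, so nothing further is needed.
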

The knowledge of the orbit type, that is, of the  type of the orbit quiver $\Upsilon(\lambda):=\Upsilon_{\A(\lambda)}$ gives first clues about the corresponding representation types, which we classify in Theorem \ref{reptype}.
\begin{lemma}\label{orbittype}
The orbit type of $\A(\lambda)$ is 
\begin{enumerate}
\item $A_n$, if $\lambda=(1^k,n-k)$ for some $k$,
\item $D_n$, if $\lambda\in\{(1^{n-4},2^2), (2,n-2)\}$,
\item $E_6$, if $\lambda\in\{(1,2,3), (2^3), (3^2)\}$,
\item $E_7$, if $\lambda\in\{(1^2,2,3), (1,2,4), (1,2^3), (3,4), (1,3^2), (2^2,3)\}$,
\item $E_8$, if $\lambda\in\{(1^3,2,3), (1,2,5), (1^2,2^3), (3,5), (1^2,3^2), (2^2,4), (2^4), (4^2)\}$,
\item $\widetilde{E_7}$, if $\lambda\in\{(1,3,4), (1,2^2,3), (2,3^2), (1^2,2,4)\}$; and if $\lambda\in\{(3^3), (1,4^2), (2^3,3)\}$.
\item $\widetilde{E_8}$, if  $\lambda\in\{(3,6), (4,5), (1,2,6), (2^2,5), (1^3,3^2), (1,2^4), (1^4,2,3), (1^3,2^3)\}$ ; and if $\lambda=(5^2)$.
\end{enumerate}
In every other case, the orbit type is wild. Note that for every listed case except for $\lambda\in\{(3^3), (1,4^2), (2^3,3), (5^2)\}$, every indecomposable projective indecomposable comes up in the preprojective component. In the Euclidean cases, no injective indecomposable is contained in the preprojective component.
\end{lemma}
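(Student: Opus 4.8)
The plan is to compute each preprojective component $\Pi(\lambda)$ directly by the knitting process and to read off the type of its orbit quiver $\Upsilon(\lambda)$. The starting point is a counting observation: by Lemma~\ref{existencepreproj} the component $\Pi(\lambda)$ exists, and since $\tau P=0$ for a projective, every $\tau$-orbit of $\Pi(\lambda)$ contains exactly one projective, while conversely each projective lying in $\Pi(\lambda)$ begins its own orbit. Hence the vertices of $\Upsilon(\lambda)$ are in bijection with the projectives contained in $\Pi(\lambda)$. Whenever all $n=|\Q(\lambda)_0|$ projectives occur in $\Pi(\lambda)$ --- the generic situation --- the orbit quiver has exactly $n$ vertices, which already matches the vertex numbers of $A_n$, $D_n$, $E_6,E_7,E_8,\widetilde{E_7},\widetilde{E_8}$ across the size ranges listed in the statement.

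I would first dispatch the Dynkin families as the base of the analysis. For $\lambda=(1^k,n-k)$ the diagram is a hook, so $\Q(\lambda)$ is an orientation of $A_n$ carrying no square and hence no commutativity relation; thus $\A(\lambda)$ is hereditary of type $A_n$ and $\Upsilon(\lambda)=A_n$. The two $D_n$-families $(1^{n-4},2^2)$ and $(2,n-2)$ each introduce a single square, and knitting from the projectives produces the characteristic branch vertex of $D_n$. The finite exceptional types $E_6,E_7,E_8$ and the Euclidean types $\widetilde{E_7},\widetilde{E_8}$ are then verified partition by partition: one knits outward from the projectives, the commutativity relations entering exactly when the almost split sequences pass through the squares of $\Q(\lambda)$, until the orbit quiver stabilises to the claimed diagram.

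For the infinitely many remaining partitions I would use monotonicity under adding boxes together with the Tits form. If $Y(\mu)\subseteq Y(\lambda)$ then $\A(\mu)$ is a convex subcategory of $\A(\lambda)$, and the induced embedding of preprojective structures lets the orbit quiver of $\mu$ sit inside that of $\lambda$, so a wild orbit type propagates upward. It therefore suffices to identify the minimal partitions lying outside every listed family and to exhibit, for each, a full subquiver of $\Upsilon(\lambda)$ whose underlying graph properly contains a Euclidean diagram and is hence of wild type. Here Corollary~\ref{criterionWild} gives an independent confirmation, since it produces a vector with $q_\lambda(v)\le-1$ in precisely these cases.

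The main obstacle is the four partitions $(3^3),(1,4^2),(2^3,3),(5^2)$, where the orbit quiver has strictly fewer vertices than the size $n$. Here one indecomposable projective fails to lie in $\Pi(\lambda)$, so the clean count breaks down and the knitting must be carried out with care to identify both the missing projective and the fact that the surviving orbits assemble into $\widetilde{E_7}$ (respectively $\widetilde{E_8}$); this is exactly what forces their separate mention in the statement. Finally, the assertion that no injective occurs in $\Pi(\lambda)$ in the Euclidean cases is read off from the same knitting --- the preprojective component is infinite and its orbits never reach an injective --- which is precisely the hypothesis needed to later invoke Lemma~\ref{criterionTameTilted}.
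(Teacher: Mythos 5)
Your overall strategy --- knit the preprojective component case by case and read off $\Upsilon(\lambda)$, with the bijection between $\tau$-orbits and the projectives lying in $\Pi(\lambda)$ --- is exactly the paper's method for cases 1--7, and your vertex-counting observation correctly explains why the four partitions $(3^3), (1,4^2), (2^3,3), (5^2)$ are exceptional. However, two of your steps do not hold up. First, your monotonicity claim for the wild cases is unjustified: if $Y(\mu)\subseteq Y(\lambda)$ then $\A(\mu)$ is indeed a convex subcategory of $\A(\lambda)$ (Lemma~\ref{reductioncomp}), but the Auslander--Reiten translate is computed relative to the ambient algebra, so there is no induced embedding of preprojective components and no reason for $\Upsilon(\mu)$ to ``sit inside'' $\Upsilon(\lambda)$. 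The paper does not argue this way; it establishes wild orbit type for the remaining cases by knitting the beginning of each preprojective component directly. Relatedly, invoking Corollary~\ref{criterionWild} as confirmation conflates wild \emph{representation} type with wild \emph{orbit} type --- their equivalence is Lemma~\ref{classificationType}, which is a consequence of this lemma and Theorem~\ref{reptype}, so using it here would be circular.

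Second, and more seriously, the four exceptional partitions are precisely where a substantive argument is needed, and ``knitting carried out with care'' cannot supply it: knitting is a finite procedure, so it can never by itself prove the \emph{negative} statement that a given projective fails to appear anywhere in an infinite component (the same objection applies to your claim that the absence of injectives in the Euclidean cases is ``read off'' from knitting). The paper closes this gap with two separate arguments. For $(3^3)$ and $(5^2)$ the missing projective ($P(3,3)$, respectively its analogue) is simultaneously injective, and since these algebras are representation-infinite by Lemma~\ref{reptypel3}, this projective-injective cannot lie in the preprojective component. For $(1,4^2)=(2^3,3)^T$ the paper knits the Auslander--Reiten quiver $\Gamma_{3,1}$ restricted to the vertex $(3,1)$, observes it is cyclic with indecomposables $U_1,\ldots,U_8$ all of whose $\tau^{-}$-translates remain nonzero at $(3,1)$, and concludes that the radical of $P(2,4)$ never occurs, so $P(2,4)$ is not preprojective. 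Your proposal identifies where the difficulty sits but supplies no mechanism for resolving it, so as written it does not prove the lemma in these cases.
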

\begin{proof}
The Auslander-Reiten quivers of every staircase algebra appearing in 1. to 5., that is, of every staircase algebra of Dynkin orbit type, are depicted in the Appendix \ref{appendix}. The orbit type can be read off directly.\\[1ex]
Let $n\in\{8,9\}$, then the orbit quivers listed in 6. and 7. can easily be calculated by knitting the beginning of the preprojective components, since every projective indecomposable comes up. Every remaining case is seen to be of of wild orbit type by the same method. There are two exceptions of this procedure:\\
 In case $\lambda=(3^3)$ the claim follows  from knitting - but one has to realize that the projective indecomposable $P(3,3)$ is injective as well and does not appear in the preprojective component, since this case is representation-infinite, see Lemma \ref{reptypel3}.\\
  In case $\lambda = (1,4^2) = (2^3,3)^T$ knitting yields the claim, but it must be proved that $P(2,4)$ is not preprojective. One method to show this fact is to knit the Auslander-Reiten quiver $\Gamma_{3,1}$ restricted to the vertex $(3,1)$; it is cyclic and there are indecomposables $U_1,..,U_8$, such that every $\tau^{-}$-translation of these will be non-zero-dimensional at the vertex $(3,1)$. Thus, it suffices to calculate the Auslander-Reiten quiver until $U_1,..,U_8$ appear and realizes that the radical of $P(2,4)$ does not appear.\\[1ex]
For $n\geq 10$, every case except $(5^2)$ is of wild orbit type. The case $(5^2)$ is of type $\widetilde{E_8}$, which can be seen by knitting its preprojective component. Again, one of the projective indecomposables is injective and  does not appear in the preprojective component - since this case is representation-infinite by Lemma \ref{reptypel3}.
\end{proof}

\section{Representation types}\label{SectRepTypes}
\subsection{Reductions}\label{reductions}
In order to classify the representation types of all staircase algebra, it is of great value to have certain reduction statements available.
\begin{lemma}\label{reductioncomp}
 Let $\A$ be a convex subcategory of $\A'$. Then
 \begin{enumerate}
\item ... if $\A$ is tame, then $\A'$ is tame or wild.
 \item ... if $\A$ is wild, then $\A'$ is wild.
\item ... if $\A'$ has finite representation type, $\A$ has finite representation type.
\item ... if $\A'$ is tame, then $\A$ is tame or of finite representation type.
\end{enumerate}
 
 In particular, if $\lambda \leq \lambda'$ is a subpartition, then these facts hold true for $\A=\A(\lambda)$ and $\A'=\A(\lambda')$.

\end{lemma}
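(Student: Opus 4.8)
The plan is to reduce all four assertions to the formal behaviour of a single comparison functor between the two module categories, and to isolate item (2) --- that wildness passes from $\A$ to $\A'$ --- as the one substantial point. The basic object is the \emph{extension-by-zero} functor $E\colon\rep_K\A\to\rep_K\A'$: writing $\A=K\Q'/I'$ and $\A'=K\Q/I$ with $\Q'$ convex in $\Q$ and $I'=\langle I(i,j)\mid i,j\in\Q'\rangle$, the functor $E$ keeps $M$ on the vertices and arrows of $\Q'$ and is $0$ everywhere else. The key check is that $E(M)$ is again bound by $I$: a relation of $\A'$ running between two vertices of $\Q'$ is, by convexity, supported entirely inside $\Q'$, hence lies in $I'$ and is killed by $M$, while any relation meeting a vertex outside $\Q'$ factors through a zero space. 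One then verifies that $E$ is exact (kernels and cokernels are formed vertexwise) and fully faithful (a morphism $E(M)\to E(N)$ must vanish on the new vertices, so it is exactly a morphism $M\to N$); in particular $\End_{\A'}(E(M))\cong\End_\A(M)$, so $E$ both preserves and reflects indecomposability and isomorphism classes.

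Given $E$, items (1), (3), (4) are short. Since $E$ carries pairwise non-isomorphic indecomposables to pairwise non-isomorphic indecomposables, $\A$ representation-infinite forces $\A'$ representation-infinite; this is exactly (3) in contrapositive form, and it yields (1) because a tame $\A$ is representation-infinite, so $\A'$ is representation-infinite, i.e. tame or wild by the dichotomy (Theorem \ref{dichotomy}). For (4) I would argue by contraposition through (2): if $\A'$ is tame it is not wild, hence $\A$ is not wild by (2), and the dichotomy leaves only finite or tame type for $\A$.

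The crux is (2). If $\A$ is wild, fix a $K\langle X,Y\rangle$-$\A$-bimodule $N$, free over $K\langle X,Y\rangle$, whose tensor functor $F=-\otimes_{K\langle X,Y\rangle}N$ separates indecomposables. The composite $E\circ F$ is again exact and preserves indecomposability and non-isomorphism, so conceptually it witnesses wildness of $\A'$; the work is to present it as tensoring with one $K\langle X,Y\rangle$-$\A'$-bimodule that is free over $K\langle X,Y\rangle$. Since $E$ is right exact and preserves coproducts, Eilenberg--Watts gives $E\cong-\otimes_\A B$ with $B=E(\A)$ finite-dimensional, and exactness of $E$ then makes $B$ flat, hence projective, as a left $\A$-module. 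Therefore $N\otimes_\A B$ is a direct summand of a free $K\langle X,Y\rangle$-module, so it is projective and hence free over $K\langle X,Y\rangle$ (a free ideal ring), and $E\circ F=-\otimes_{K\langle X,Y\rangle}(N\otimes_\A B)$ certifies that $\A'$ is wild. I expect this bimodule bookkeeping --- establishing the Eilenberg--Watts presentation and the freeness --- to be the main obstacle.

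Finally I would specialise to staircase algebras. A subpartition $\lambda\leq\lambda'$ means $Y(\lambda)\subseteq Y(\lambda')$ with a common corner at $(1,1)$, so the boxes of $Y(\lambda)$ form an order ideal of $\mathbf{Z}_{\geq 1}^2$ for the componentwise order. As every arrow of $\Q(\lambda')$ strictly decreases one coordinate, any path of $\Q(\lambda')$ between two boxes $u,v\in Y(\lambda)$ passes only through boxes $\leq u$, which again lie in $Y(\lambda)$; thus $\Q(\lambda)$ is a convex subquiver and the commutativity relations on squares with all corners in $Y(\lambda)$ are precisely $I(\lambda)=\langle I(\lambda')(u,v)\mid u,v\in\Q(\lambda)_0\rangle$, so $\A(\lambda)$ is a convex subcategory of $\A(\lambda')$ and (1)--(4) apply. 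I would also note a self-contained alternative available here: since staircase algebras and all their convex subcategories are strongly simply connected with preprojective components (Proposition \ref{basicproperties}), and since $q_{\A(\lambda)}$ is literally the restriction of $q_{\A(\lambda')}$ to $\mathbf{Z}^{\Q(\lambda)_0}$, weak positivity and weak non-negativity descend under restriction while a vector with value $\leq-1$ extends by zero; items (1)--(4) then follow directly from Theorem \ref{criterionFinite}, Theorem \ref{criterionTame} and Corollary \ref{criterionWild}, bypassing the bimodule argument for item (2).
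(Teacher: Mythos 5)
Your proposal is correct and follows essentially the same route as the paper, whose entire proof is the one-line remark that the claims follow from general theory ``by restricting in a natural way or expanding with zero'' --- your functor $E$ is exactly that zero-extension, and your Eilenberg--Watts/fir argument for item (2), the convexity check for $Y(\lambda)\subseteq Y(\lambda')$, and the Tits-form alternative are just the standard details the paper delegates to \cite{ASS}. (One simplification: $B=E(\A)$ is $\A$ itself as a left $\A$-module, so freeness is immediate and the flat-implies-projective step is unnecessary.)
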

\begin{proof}
 The claim follows from general representation theory of quivers with relations \cite{ASS} by restricting in a natural way or expanding with zero. 
\end{proof}
\begin{lemma}\label{reductionsymm}
For each partition $\lambda$, the representation type of $\A(\lambda)$ and $\A(\lambda^{T})$ is the same. 
\end{lemma}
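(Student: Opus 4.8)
The plan is to show that $\A(\lambda)$ and $\A(\lambda^T)$ are isomorphic as $K$-algebras, from which equality of representation types is immediate: an algebra isomorphism induces an equivalence $\rep_K\A(\lambda)\simeq\rep_K\A(\lambda^T)$, and finite/tame/wild type is defined purely in terms of this category. So the whole lemma reduces to exhibiting a single isomorphism coming from reflection of the Young diagram across its main diagonal.

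First I would set up this reflection on the level of quivers. Since $Y(\lambda^T)$ is by definition obtained from $Y(\lambda)$ by interchanging rows and columns, the assignment $(i,j)\mapsto(j,i)$ is a bijection between the boxes of $Y(\lambda)$ and those of $Y(\lambda^T)$, hence between the vertex sets $\Q(\lambda)_0$ and $\Q(\lambda^T)_0$. I would then check that this bijection carries arrows to arrows: a horizontal arrow $\alpha_{i,j}\colon(i,j)\to(i-1,j)$ of $\Q(\lambda)$ is sent to the pair $(j,i)\to(j,i-1)$, which is exactly a vertical arrow of $\Q(\lambda^T)$ (the one emanating from the box $(j,i)$), and symmetrically each vertical arrow of $\Q(\lambda)$ is sent to a horizontal arrow of $\Q(\lambda^T)$. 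Because orientation (source-to-target) is respected, composability of paths is preserved, so the map extends to an isomorphism of path algebras $K\Q(\lambda)\xrightarrow{\sim}K\Q(\lambda^T)$.

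Next I would verify that the admissible ideals match. Each generating commutativity relation $\beta_{i,j}\alpha_{i,j+1}-\alpha_{i-1,j+1}\beta_{i,j+1}$ of $I(\lambda)$ lives on an elementary square of $\Q(\lambda)$; under the reflection this square maps to the corresponding square of $\Q(\lambda^T)$, and the two paths around it are merely interchanged (the horizontal-then-vertical path becomes the vertical-then-horizontal one and conversely). Hence the image of $I(\lambda)$ is precisely the commutativity ideal $I(\lambda^T)$, and the path-algebra isomorphism descends to an isomorphism $\A(\lambda)\cong\A(\lambda^T)$.

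The only real point requiring care — the expected main obstacle — is the bookkeeping that reconciles the paper's specific conventions: the "increasing partition, columns read right to left" definition of $\lambda^T$ together with the bottom-to-top and left-to-right numbering of boxes from Section \ref{SectStaircase} must be matched so that the reflection genuinely lands box $(i,j)$ on box $(j,i)$ of $Y(\lambda^T)$, rather than on some shifted or reversed coordinate. I would handle this as a direct index check against the definitions. As a safety net I note that even if a careful accounting revealed the reflection to be a contravariant (anti-)isomorphism, i.e. $\A(\lambda)\cong\A(\lambda^T)^{\mathrm{op}}$, nothing changes: $\rep_K\A^{\mathrm{op}}\simeq(\rep_K\A)^{\mathrm{op}}$ preserves indecomposability and the number of parameters in families of modules, so representation type is invariant under passage to the opposite algebra. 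Either way the representation types of $\A(\lambda)$ and $\A(\lambda^T)$ coincide.
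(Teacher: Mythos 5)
Your proof is correct; in fact the paper states Lemma \ref{reductionsymm} with no proof at all, and your argument --- the reflection $(i,j)\mapsto(j,i)$ of the Young diagram inducing an isomorphism of bound quivers, hence of algebras $\A(\lambda)\cong\A(\lambda^{T})$, carrying each square's commutativity relation to (minus) the corresponding relation --- is exactly the intended justification. Your bookkeeping is consistent with the paper's bottom-left numbering convention, and since the reflection sends $\alpha_{i,j}\colon(i,j)\to(i-1,j)$ to the correctly oriented vertical arrow $(j,i)\to(j,i-1)$, the map is a genuine covariant isomorphism, so your opposite-algebra fallback, while valid, is not even needed.
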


\subsection{Classification}
We provide a complete classification of the representation type of a staircase algebra. In order to assure a nice structure of the presentation, we begin by classifying the representation types of staircase algebras of length $2$ in Lemma \ref{reptypel2} and of length $3$ in Lemma \ref{reptypel3}, since most of the staircase algebras of finite and tame representation type come up in these contexts. We then generalize the results to arbitrary staircase algebras in Theorem \ref{reptype} which completes the classification. 
\begin{lemma}\label{reptypel2}
 A  staircase algebra $\A(\lambda)$ of length $2$, that is, $\lambda=(\lambda_1,\lambda_2)\vdash n$, is
 \begin{itemize}
 \item of finite representation type if and only if $n\leq 8$ or $\lambda_1\in\{1,2\}$.
 \item tame concealed if and only if $\lambda=(3,6)$.
 \item tame, but not tame concealed, if and only if $\lambda\in\{(4,5), (5^2)\}$.
\end{itemize} 
Otherwise, $\A(\lambda)$ is of wild representation type.
\end{lemma}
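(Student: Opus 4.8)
The plan is to reduce the entire classification to the (weak) positivity of the Tits form. Since $\A(\lambda_1,\lambda_2)$ is strongly simply connected and admits a preprojective component (Proposition \ref{basicproperties}, Lemma \ref{existencepreproj}), Theorem \ref{criterionFinite} makes finiteness equivalent to $q_{(\lambda_1,\lambda_2)}$ being weakly positive, Theorem \ref{criterionTame} makes tameness equivalent to weak non-negativity, and Corollary \ref{criterionWild} (with Theorem \ref{dichotomy}) covers the rest. First I would write $\Q(\lambda_1,\lambda_2)$ explicitly: the long (bottom) row carries $\lambda_2$ vertices, the short (top) row carries $\lambda_1$ vertices, they are joined by $\lambda_1$ rungs subject to $\lambda_1-1$ commutativity relations, and a tail of length $\lambda_2-\lambda_1$ hangs off the long row. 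From this I read off $q_{(\lambda_1,\lambda_2)}$, and whenever the form turns out to be non-negative of corank one I compute the positive generator of its radical, i.e. the null root $\delta$.

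For the finite part I would argue according to $\lambda_1$. For $\lambda_1=1$ there are no relations and $\A$ is hereditary of type $A_n$. For $\lambda_1=2$ the quiver is a single commutative square with a tail, of Dynkin type $D_n$; a short induction on the tail length shows $q_{(2,\lambda_2)}$ is positive definite, hence weakly positive, uniformly in $\lambda_2$ (the orbit type $D_n$ of Lemma \ref{orbittype} is the conceptual reason, and the relation term is exactly what prevents the square from being the Euclidean $\widetilde{A_3}$). For $\lambda_1\ge 3$ and $n\le 8$ only the four partitions $(3,3),(3,4),(3,5),(4,4)$ occur; these have Dynkin orbit type $E_6,E_7,E_8,E_8$, and either a direct check that $q_\lambda$ is weakly positive or the sincere-module argument of Lemma \ref{criterionTiltedSincere} yields finiteness. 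Together with the infinite cases below and the subpartition monotonicity of Lemma \ref{reductioncomp}, this proves the finiteness statement in both directions.

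The heart of the proof is the boundary $\{(3,6),(4,5),(5^2)\}$. Each has orbit type $\widetilde{E_8}$ with no injective in the preprojective component (Lemma \ref{orbittype}), so by Lemma \ref{criterionTameTilted} tameness is equivalent to $q_\lambda$ being non-negative, which I verify together with $\corank^0 q_\lambda=1$; hence all three are tame. To separate tame concealed from merely tame I would use the criterion that, for the non-negative corank-one forms at hand, criticality (equivalently, being tame concealed) holds exactly when the null root $\delta$ is sincere: if $\delta$ is sincere then every proper coordinate restriction of $q_\lambda$ loses the only isotropic ray and is weakly positive, whereas if some $\delta_i=0$ then the restriction to $\supp\delta$ already fails weak positivity. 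Computing the null roots gives $\delta=(1,3,4,3,2,1;\,2,3,2)$ for $(3,6)$, which is sincere, so $(3,6)$ is critical and lies on the BHV list; for $(4,5)$ one finds $\delta=(0,1,2,2,1;\,1,2,2,1)$, which vanishes at the sink corner, so $(4,5)$ is not minimal representation-infinite. Finally $(5^2)$ contains $(4,5)$ as a convex subcategory (subpartition, Lemma \ref{reductioncomp}), which is already representation-infinite, so $(5^2)$ is not critical either. Thus $(4,5)$ and $(5^2)$ are tame but not tame concealed.

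For the wild part, among the remaining infinite partitions ($\lambda_1\ge 3$, $n\ge 9$, $\lambda\notin\{(3,6),(4,5),(5^2)\}$) the minimal ones under the subpartition order are $(3,7)$ and $(4,6)$; here the tail, respectively the ladder, is one step too long and $q_\lambda$ becomes indefinite, so I would exhibit a vector $v$ with $q_\lambda(v)\le -1$ and invoke Corollary \ref{criterionWild}. Every other such $\lambda$ contains $(3,7)$ or $(4,6)$ as a subpartition and is wild by Lemma \ref{reductioncomp}. The hard part will be precisely the tame/tame-concealed dichotomy on $\{(3,6),(4,5),(5^2)\}$: since all three share the Euclidean orbit type and a non-negative corank-one Tits form, the distinction is genuinely the criticality (minimality) property, and the decisive computation is the sincerity of the null root, supplemented by exhibiting the proper representation-infinite convex subcategory responsible for the failure of minimality in the case $(4,5)$.
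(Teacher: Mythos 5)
Your strategy is sound in outline, but it has one genuine gap, located exactly at $\lambda=(5^2)$. You certify tameness of all three boundary cases $(3,6)$, $(4,5)$, $(5^2)$ via Lemma \ref{criterionTameTilted}, citing Lemma \ref{orbittype} for ``no injective in the preprojective component''. But Lemma \ref{criterionTameTilted} also requires \emph{all} indecomposable projectives to lie in the preprojective component, and Lemma \ref{orbittype} explicitly lists $(5^2)$ among the exceptions: there the corner projective is simultaneously injective and does \emph{not} appear in the preprojective component. So the hypotheses fail for $(5^2)$ -- and they must fail, because the paper shows (Lemma \ref{MaxTameStraightForw3}) that $\A(5^2)$ is \emph{non-tilted} and that $\rad q_{(5^2)}$ is two-dimensional, which also breaks the ``non-negative of corank one, compute the positive radical generator'' picture your plan is organized around (only the isotropic corank is $1$; the radical is not one-dimensional, and one of its generators is not positive). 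The repair stays inside the paper's toolkit: since $\A(5^2)$ is strongly simply connected (Proposition \ref{basicproperties}), Theorem \ref{criterionTame} reduces its tameness to weak non-negativity of $q_{(5^2)}$, which one proves directly as in Lemma \ref{MaxTameStraightForw3} (subtracting a suitable radical vector to reduce to a smaller non-negative form), or, as the paper's actual proof does, by checking that $\A(5^2)$ contains no hypercritical convex subcategory from Unger's list.

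Otherwise your route is correct but genuinely different from the paper's, and the difference is worth noting. The paper settles the finite cases by knitting the Auslander--Reiten quivers (Appendix \ref{appendix}) and settles the borderline infinite cases by consulting lists: $(3,6)$ is on the BHV list, $(4,5)$ and $(5^2)$ avoid Unger's hypercritical list and contain a BHV ladder, hence are tame but not tame concealed. You instead work purely with the Tits form: weak positivity for finiteness (legitimate via Theorem \ref{criterionFinite} together with Lemma \ref{existencepreproj}), and the sincerity of the nullroot to decide criticality -- sincere $\delta$ for $(3,6)$ gives tame concealed, the vanishing coordinate of $\delta$ for $(4,5)$ gives a proper restriction that is not weakly positive, matching the paper's observation that $(4,5)$ contains the tame concealed ladder. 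This sincerity criterion is valid for non-negative forms whose non-negative isotropic vectors form a single ray, and it buys a computation-only substitute for the BHV list; note, though, that in concluding ``not critical $\Rightarrow$ not tame concealed'' you are implicitly using the identification of tame concealed with critical (minimal representation-infinite) algebras, which the paper also invokes from \cite{Ri}. Two smaller slips: Lemma \ref{criterionTiltedSincere} yields tiltedness, not finiteness (tilted algebras can be Euclidean or wild), so for $(3,3)$, $(3,4)$, $(3,5)$, $(4,4)$ only your direct weak-positivity check actually closes the argument; the wild side ($(3,7)$ and $(4,6)$ with vectors of negative Tits value, plus subpartition monotonicity via Lemma \ref{reductioncomp}) coincides with the paper's proof.
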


\begin{proof}
The following table shows the structure of our proof; the marked ones show which cases need to be proved in order to classify every remaining non-marked case due to reductions via Lemma \ref{reductioncomp} and Lemma \ref{reductionsymm} (W=Wild, T=Tame, TC= Tame concealed, F=Finite):

\begin{center}
\begin{tabular}{l||l|l|l|l|l|l|l|l|l|l|}
\hline
\vdots & \cellcolor{grey}\vdots & \cellcolor{grey}\vdots & \cellcolor{grey}\vdots & \cellcolor{grey}\vdots & \cellcolor{grey}\vdots & \cellcolor{lightviolet}\vdots &\cellcolor{lightviolet}\vdots&\cellcolor{lightviolet}\vdots&\cellcolor{lightviolet}\vdots\\ \hline
6 & \cellcolor{grey}x & \cellcolor{grey}x & \cellcolor{grey}x & \cellcolor{grey}x & \cellcolor{grey}x & \cellcolor{lightviolet}W &\cellcolor{lightviolet}W &\cellcolor{lightviolet}W & \cellcolor{lightviolet}$\cdots$\\ \hline
5 & \cellcolor{grey}x & \cellcolor{grey}x & \cellcolor{grey}x & \cellcolor{grey}x & \cellcolor{dlightblue}\textbf{T} & \cellcolor{lightviolet}W & \cellcolor{lightviolet}W & \cellcolor{lightviolet}W & \cellcolor{lightviolet}$\cdots$\\ \hline
4 & \cellcolor{grey}x & \cellcolor{grey}x & \cellcolor{grey}x & \cellcolor{dlightgreen}\textbf{F} & \cellcolor{dlightblue}\textbf{T} & \cellcolor{dlightviolet}\textbf{W} & \cellcolor{lightviolet}W & \cellcolor{lightviolet}W & \cellcolor{lightviolet}$\cdots$\\ \hline
3 & \cellcolor{grey}x & \cellcolor{grey}x & \cellcolor{lightgreen}F & \cellcolor{lightgreen}F & \cellcolor{dlightgreen}\textbf{F} & \cellcolor{dlightblue} \textbf{TC} & \cellcolor{dlightviolet}\textbf{W} & \cellcolor{lightviolet}W &\cellcolor{lightviolet}  \cellcolor{lightviolet}$\cdots$\\ \hline
2 & \cellcolor{grey}x & \cellcolor{lightgreen}F &\cellcolor{lightgreen}F & \cellcolor{lightgreen}F & \cellcolor{lightgreen}F &\cellcolor{dlightgreen}\textbf{F} & \cellcolor{dlightgreen}\textbf{F}& \cellcolor{dlightgreen}\textbf{F}& \cellcolor{dlightgreen}$\cdots$\\ \hline
1 & \cellcolor{lightgreen}F & \cellcolor{lightgreen}F & \cellcolor{lightgreen}F & \cellcolor{lightgreen}F &\cellcolor{lightgreen}F & \cellcolor{lightgreen}F &\cellcolor{lightgreen}F& \cellcolor{lightgreen}F& \cellcolor{dlightgreen}$\cdots$\\ \hline\hline 
$\lambda_1/\lambda_2$ & 1 & 2 & 3 & 4 & 5 & 6 & 7 & 8 & $\cdots$
\end{tabular}
\end{center}
Representation-finite cases: Let $\lambda_1=1$, then $\Q(\lambda)$ is of type $A_n$ which is representation-finite. 
 For the remaining maximal finite cases, the Auslander-Reiten quivers (or their symmetric versions), are attached in the Appendix \ref{appendix}. In particular, if $\lambda_1=2$, then the Auslander-Reiten quiver $\Gamma(\lambda)$ for $\lambda=(2,6)$ is attached and it is easy to see that the quiver stays finite, if $\lambda_2$ increases.\\[1ex]
Representation-infinite cases: The case $(3,6)$ is tame concealed by \cite{HaVo}. \\[1ex]
The cases $\lambda\in\{(5^2),(4,5)\}$ are tame, since the algebras $\A(\lambda)$ do not contain a hypercritical convex subcategory (Unger's list \cite{Un}) by Theorem \ref{criterionTame} (see Lemma \ref{MaxTameStraightForw3} for a straight-away proof of their tameness). They are not minimal tame and thus not tame concealed \cite{Ri}, since they contain a tame concealed subquiver of the BHV-list \cite{BGRS}:
\begin{center}\small\begin{tikzpicture}[descr/.style={fill=white}]
\matrix (m) [matrix of math nodes, row sep=0.8em,
column sep=1.0em, text height=0.8ex, text depth=0.1ex]
{ &\bullet & \bullet & \bullet & \bullet\\
\bullet & \bullet & \bullet & \bullet  &   \\};
\path[-]
(m-1-2) edge  (m-1-3)
(m-1-3) edge (m-1-4)
(m-1-4) edge (m-1-5)
(m-2-1) edge  (m-2-2)
(m-2-2) edge  (m-2-3)
(m-2-3) edge (m-2-4)
(m-1-2) edge (m-2-2)
(m-1-3) edge (m-2-3)
(m-1-4) edge (m-2-4)
(m-1-2) edge[-,dotted] (m-2-3)
(m-1-3) edge[-,dotted] (m-2-4)
;\end{tikzpicture}\end{center} 
The cases $\lambda=(4,6)$ and $\lambda'=(3,7)$ are wild by Corollary \ref{criterionWild}, since
\[q_{\lambda}\left(\begin{array}{cccccc}
&&2& 4 &4 &2\\ 
 1& 2 &4 &4& 2& 0
\end{array}\right) =  q_{\lambda'}\left(\begin{array}{cccccccc}
&&&&4& 6 &4 \\ 
1& 2 &4 &6& 8& 6 & 2
\end{array}\right) = -1  \]
and explicit $2$-parameter families are presented in Remark \ref{explicit2Param}.
\end{proof}

\begin{lemma}\label{reptypel3}
 A staircase algebra $\A(\lambda)$ of length $3$, that is,  $\lambda=(\lambda_1,\lambda_2,\lambda_3)\vdash n$, is
\begin{itemize}
\item of finite representation type if and only if 
$n\leq 7$ or $\lambda\in\{(1,1,\lambda_3), (1,2,5), (2^2,4)\}$.
\item tame concealed if and only if $\lambda\in\{(1,3,4), (1,2,6), (2^2,5), (1^3,3^2)\}$.
\item tame, but not tame concealed, if and only if 
$\lambda\in\{(1,4^2), (2,3^2), (2^3,3), (3^3)\}$.
\end{itemize}
Otherwise, $\A(\lambda)$ is of wild representation type.
 \end{lemma}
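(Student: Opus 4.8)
The plan is to run the same scheme as in the proof of Lemma \ref{reptypel2}, now organizing the increasing partitions $\lambda=(\lambda_1,\lambda_2,\lambda_3)$ by the pair $(\lambda_2,\lambda_3)$ for each fixed value of $\lambda_1$. The two monotonicity tools carry the bulk of the work: by Lemma \ref{reductioncomp} the representation type can only increase along a subpartition inclusion $\mu\le\lambda$ (finite $\to$ tame $\to$ wild), and by Lemma \ref{reductionsymm} it is invariant under transposition. It therefore suffices to settle, in each direction, the maximal representation-finite partition, the critical (tame concealed) partition, the maximal tame partition, and the minimal wild partition; every remaining case is then forced by reduction. Throughout I may apply the finite- and tame-type criteria, since $\A(\lambda)$ is triangular, strongly simply connected, and admits a preprojective component (Proposition \ref{basicproperties}, Lemma \ref{existencepreproj}).

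For the representation-finite cases I would proceed via Theorem \ref{criterionFinite} by checking weak positivity of $q_\lambda$. The family $(1,1,\lambda_3)$ is immediate: its Young diagram is a hook, so $\Q(\lambda)$ is an unoriented path of type $A_{\lambda_3+2}$ carrying no commutativity relation, whence $\A(1,1,\lambda_3)$ is hereditary of Dynkin type $A$ and finite for every $\lambda_3$. For the rest it suffices to verify weak positivity for the maximal finite partitions $(1,3,3)$, $(1,2,5)$ and $(2^2,4)$, since every other finite partition — in particular every length-$3$ partition of $n\le 7$ — is a subpartition of one of these or of the $A$-family. Each of the three has Dynkin orbit type by Lemma \ref{orbittype}, and their finite Auslander--Reiten quivers are recorded in Appendix \ref{appendix}, which doubles as the verification.

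The four tame concealed partitions $(1,3,4),(1,2,6),(2^2,5),(1^3,3^2)$ I would certify directly against the BHV-list \cite{Bo5,HaVo}: matching each bound quiver $\Q(\lambda)$ with its frame shows it is tame concealed, equivalently critical. As a cross-check, Lemma \ref{orbittype} assigns these the Euclidean orbit type $\widetilde{E_7}$ or $\widetilde{E_8}$ with all indecomposable projectives and no indecomposable injective in the preprojective component, so Lemma \ref{criterionTameTilted} already displays them as tame and tilted of Euclidean type; note also that $(2^2,5)$ and $(1^3,3^2)$ are mutually transposed, while $(1,3,4)$ and $(1,2,6)$ transpose to partitions of length $>3$, so Lemma \ref{reductionsymm} is consistent with the final classification. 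For the maximal tame partitions $(1,4^2),(2,3^2),(2^3,3),(3^3)$, again of Euclidean orbit type $\widetilde{E_7}$, I would obtain tameness from Theorem \ref{criterionTame} by verifying that $q_\lambda$ is weakly non-negative (equivalently, that no full convex subcategory lies in Unger's list \cite{Un}); that they fail to be tame concealed follows because each properly contains a representation-infinite convex subcategory — a tame concealed frame of the BHV-list, as in the length-$2$ argument — and hence is not minimal representation-infinite.

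Finally, wildness follows from Corollary \ref{criterionWild}: for each minimal wild partition — the relevant representatives being $(1,2,7),(1,3,5),(2^2,6),(2,3,4)$, each obtained by pushing one part just past a critical value — I would exhibit a vector $v\in\mathbf{N}^{\Q_0}$ with $q_\lambda(v)\le -1$ (as was done for $(4,6)$ and $(3,7)$ in Lemma \ref{reptypel2}) and then propagate wildness to every larger partition by Lemma \ref{reductioncomp}(2). The \emph{main obstacle} is not any single computation but the bookkeeping: one must lay out the now genuinely two-dimensional grid of cases (per fixed $\lambda_1$) carefully enough, together with the transpose identifications, to be certain the chosen boundary algebras cover every length-$3$ partition with no gaps. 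A secondary subtlety is that for $(3^3)$ and $(1,4^2)=(2^3,3)^T$ the preprojective component omits some projectives (one projective being injective, as flagged in Lemma \ref{orbittype}), so Lemma \ref{criterionTameTilted} is unavailable there and tameness must rest solely on the weak non-negativity of $q_\lambda$ through Theorem \ref{criterionTame}.
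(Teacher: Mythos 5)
Your proposal is correct and matches the paper's proof in essentially every respect: the same grid-plus-reduction scheme via Lemma \ref{reductioncomp} and Lemma \ref{reductionsymm}, the same boundary cases (finite: $(1,1,\lambda_3)$ as type $A$, $(1,3,3)$, $(1,2,5)$, $(2^2,4)$ via the Auslander--Reiten quivers in Appendix \ref{appendix}; tame concealed via the BHV-list; tame via Theorem \ref{criterionTame} and Unger's list, with non-concealedness from a representation-infinite convex subcategory; wild via $q_\lambda(v)\leq -1$ for exactly the paper's minimal cases $(2,3,4)$, $(1,3,5)$, $(1,2,7)$, $(2^2,6)$). The only missing detail relative to the paper is the explicit negative vectors, which you correctly flag as routine computations.
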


\begin{proof}
Whenever $\lambda_1\geq 3$, either $\lambda=(3^3)$, or $\A(\lambda)$ is of wild representation type: the case $\lambda=(2,3,4)$ is wild by Corollary \ref{criterionWild}
\[q_{\lambda}\left(\begin{array}{cccc}
&  &2 &2\\ 
& 2 &4 &2\\ 
1 &3& 3& 1
\end{array}\right) = -1.  \]

If $\lambda=(3^3)$, then $\A(\lambda)$  is not wild, since it does not contain a hypercritical convex subcategory (Unger's list \cite{Un}). See Lemma \ref{MaxTameStraightForw3} for a straight forward proof of its tameness. It is  not tame concealed, since it contains the tame concealed algebra
\begin{center}\small\begin{tikzpicture}[descr/.style={fill=white}]
\matrix (m) [matrix of math nodes, row sep=0.8em,
column sep=1.0em, text height=0.8ex, text depth=0.1ex]
{   & \bullet & \bullet\\
 \bullet & \bullet & \bullet\\
\bullet  & \bullet & \\};
\path[->]
(m-1-2) edge[=]  (m-1-3)
(m-2-1) edge  (m-2-2)
(m-2-1) edge  (m-3-1)
(m-2-2) edge  (m-2-3)
(m-2-2) edge  (m-3-2)
(m-3-1) edge  (m-3-2)
(m-1-2) edge (m-2-2)
(m-1-3) edge (m-2-3)
(m-1-2) edge[-,dotted] (m-2-3)
(m-2-1) edge[-,dotted] (m-3-2)
;\end{tikzpicture}\end{center}  
By reduction via Lemma \ref{reductioncomp} we only consider $\lambda_1\in\{1,2\}$. The following tables visualize all cases and it suffices to prove the marked ones by  Lemma \ref{reductioncomp}.
\begin{center}
\begin{tabular}{|l||l|l|l|l|l|l|l|l|l|}
\hline
$\lambda_1=1$ & \cellcolor{grey}\vdots & \cellcolor{grey}\vdots & \cellcolor{grey}\vdots & \cellcolor{grey}\vdots & \cellcolor{grey}\vdots & \cellcolor{lightviolet}\vdots &\cellcolor{lightviolet}\vdots &\cellcolor{lightviolet}\vdots &\cellcolor{lightviolet}\vdots\\ \hline
6 & \cellcolor{grey}x & \cellcolor{grey}x & \cellcolor{grey}x & \cellcolor{grey}x & \cellcolor{grey}x & \cellcolor{lightviolet}W &\cellcolor{lightviolet}W &\cellcolor{lightviolet}W & \cellcolor{lightviolet}$\cdots$\\ \hline
5 & \cellcolor{grey}x & \cellcolor{grey}x & \cellcolor{grey}x & \cellcolor{grey}x & \cellcolor{lightviolet}W & \cellcolor{lightviolet}W & \cellcolor{lightviolet}W & \cellcolor{lightviolet}W & \cellcolor{lightviolet}$\cdots$\\ \hline
4 & \cellcolor{grey}x & \cellcolor{grey}x & \cellcolor{grey}x & \cellcolor{dlightblue}\textbf{T} & \cellcolor{lightviolet}W & \cellcolor{lightviolet}W & \cellcolor{lightviolet}W & \cellcolor{lightviolet}W & \cellcolor{lightviolet}$\cdots$\\ \hline
3 & \cellcolor{grey}x & \cellcolor{grey}x & \cellcolor{dlightgreen}\textbf{F} & \cellcolor{dlightblue}\textbf{TC} & \cellcolor{dlightviolet}\textbf{W} & \cellcolor{lightviolet}W & \cellcolor{lightviolet}W & \cellcolor{lightviolet}W & \cellcolor{lightviolet}$\cdots$\\ \hline
2 & \cellcolor{grey}x & \cellcolor{lightgreen}F &\cellcolor{lightgreen}F & \cellcolor{lightgreen}F & \cellcolor{dlightgreen}\textbf{F} &\cellcolor{dlightblue}\textbf{TC} & \cellcolor{dlightviolet}\textbf{W}& \cellcolor{lightviolet}W& \cellcolor{lightviolet}$\cdots$\\ \hline
1 & \cellcolor{lightgreen}F & \cellcolor{lightgreen}F & \cellcolor{lightgreen}F & \cellcolor{lightgreen}F &\cellcolor{lightgreen}F & \cellcolor{lightgreen}F &\cellcolor{lightgreen}F& \cellcolor{dlightgreen}\textbf{F}& \cellcolor{dlightgreen}$\cdots$\\ 
\hline
\hline
$\lambda_1=2$ &\cellcolor{grey}\vdots&  \cellcolor{grey}\vdots & \cellcolor{grey}\vdots & \cellcolor{grey}\vdots & \cellcolor{grey}\vdots & \cellcolor{lightviolet}\vdots &\cellcolor{lightviolet}\vdots&\cellcolor{lightviolet}\vdots&\cellcolor{lightviolet}\vdots\\ \hline
4 &\cellcolor{grey}x & \cellcolor{grey}x & \cellcolor{grey}x & \cellcolor{lightviolet}W & \cellcolor{lightviolet}W & \cellcolor{lightviolet}W & \cellcolor{lightviolet}W & \cellcolor{lightviolet}W & \cellcolor{lightviolet}$\cdots$\\ \hline
3 &\cellcolor{grey}x &\cellcolor{grey}x & \cellcolor{dlightblue}\textbf{T} & \cellcolor{dlightviolet}\textbf{W} & \cellcolor{lightviolet}W & \cellcolor{lightviolet}W & \cellcolor{lightviolet}W & \cellcolor{lightviolet}W & \cellcolor{lightviolet}$\cdots$\\ \hline
2 &\cellcolor{grey}x  &\cellcolor{lightgreen}F &\cellcolor{lightgreen}F & \cellcolor{dlightgreen}\textbf{F} & \cellcolor{dlightblue}\textbf{TC} &\cellcolor{dlightviolet}\textbf{W} & \cellcolor{lightviolet}W& \cellcolor{lightviolet}W& \cellcolor{lightviolet}$\cdots$\\ \hline
$\lambda_2/\lambda_3$ & 1&  2 & 3 & 4 & 5 & 6 & 7 & 8 & $\cdots$\\ \hline
\end{tabular}
\end{center}

Representation-finite cases: Either $\lambda_2=1$, then we arrive at an $A_n$-classification. For the remaining finite cases, the Auslander-Reiten quivers or their symmetric versions (by Lemma \ref{reductionsymm})  are depicted in the Appendix \ref{appendix}.\\[1ex]
Representation-infinite cases: \\[1ex]
Every marked tame concealed case is tame concealed by the BHV-list \cite{HaVo}. \\[1ex]
The case $\lambda=(1,4^2)$ is not wild by Theorem \ref{criterionTame}, since the algebra $\A(\lambda)$ does not contain any hypercritical convex subcategory (Unger's list \cite{Un}). It is representation-infinite and not tame concealed, that is, minimal tame, since it contains the case $(1,3,4)$ which is tame.\\[1ex]
The case $\lambda=(2,3^2)$ is tame, but not tame concealed since $\A(3^3)$ is tame and since it contains the above depicted convex subcategory.\\[1ex]
For every minimal wild case, by Corollary \ref{criterionWild} we provide dimension vectors $v$ which fulfill $q_{\lambda}(v)\leq -1$:
\begin{center}

\begin{tabular}{|c|c|c|}
\hline
$\small\begin{array}{cccc}
 & &1\\ 
& &2\\ 
 & 2& 4\\
 & 4& 4\\
2 & 4& 2\\
\end{array}$& $\small\begin{array}{cccc} 
& &1\\ 
& &2\\ 
& &4\\ 
& &6\\ 
 &&8\\
 & 6&10\\
4 & 8& 6\\
\end{array}$& $\small\begin{array}{ccc}
& &1\\ 
& &2\\
& &4\\
 & & 6\\
2 & 6& 8\\
4 & 6& 4\\
\end{array}$\\\hline
$(1,3,5)$& $(1,2,7)$& $(2^2,6)$\\ \hline

\end{tabular}

\end{center}
The case $\lambda=(2,3,4)$ has been shown to be wild above.
\end{proof}
We are able to deduce all remaining cases now which leads to a complete classification of the representation types of all staircase algebras.
\begin{theorem}\label{reptype}
A staircase algebra $\A(\lambda)$ is 
\begin{itemize}
\item representation-finite if and only if one of the following holds true:
\begin{enumerate}
 \item\label{1stcase} $\lambda\in\{(n),~ (1^k,n-k),~ (2,n-2),~ (1^{n-4},2^2)\}$ for some $k\leq n$,
\item\label{2ndcase}  $n\leq 8$ and $\lambda\notin\{(1,3,4),~ (2,3^2),~ (1,2^2,3),~ (1^2,2,4)\}$.
\end{enumerate}
\item tame concealed if and only if $\lambda$ comes up in the following list:\\ $(3,6)$, $(1,2,6)$, $(1,3,4)$, $(2^2,5)$, $(1^2,2,4)$, $(1,2^2,3)$, $(1^3,3^2)$, $(1^3,2^3)$, $(1^4,2,3)$.
\item tame, but not tame concealed if and only if $\lambda$ comes up in the following list:\\ $(4,5)$, $(5^2)$, $(1,4^2)$, $(2,3^2)$, $(3^3)$, $(2^3,3)$, $(1,2^4)$, $(2^5)$.
\end{itemize} 
 Otherwise, $\A(\lambda)$ is of wild representation type. 
\end{theorem}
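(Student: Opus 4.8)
The plan is to leverage the two preceding length-restricted classifications (Lemma \ref{reptypel2} and Lemma \ref{reptypel3}) together with the reduction machinery of Lemma \ref{reductioncomp} and the transpose symmetry of Lemma \ref{reductionsymm}, reducing the general case $l(\lambda)\geq 4$ to a finite verification. The overall strategy is a two-sided squeeze: I identify a list of \emph{minimal wild} staircase algebras, show each is wild via Corollary \ref{criterionWild} (exhibiting an explicit $v$ with $q_\lambda(v)\leq -1$), and show each of the finitely many \emph{maximal} finite/tame cases has the claimed type; every remaining $\lambda$ then inherits its type by containing a minimal wild subpartition (hence is wild by Lemma \ref{reductioncomp}(2)) or by being a subpartition of a maximal tame/finite case.

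\textbf{First} I would dispose of the finite cases. The infinite families $(n)$, $(1^k,n-k)$, $(2,n-2)$, $(1^{n-4},2^2)$ are handled structurally: $(n)$ and $(1^k,n-k)$ give quivers of type $A_n$, while $(2,n-2)$ and its transpose $(1^{n-4},2^2)$ were already shown finite for all $n$ in Lemma \ref{reptypel2} (the Auslander--Reiten quiver of $(2,6)$ stabilizes as $\lambda_2$ grows). For the sporadic range $n\leq 8$, representation-finiteness follows because $\A(\lambda)$ admits a preprojective component (Lemma \ref{existencepreproj}) and $q_\lambda$ is weakly positive, so Theorem \ref{criterionFinite} applies; the four excluded partitions $(1,3,4)$, $(2,3^2)$, $(1,2^2,3)$, $(1^2,2,4)$ are precisely the ones already classified as tame (concealed) in Lemma \ref{reptypel3} and its transposes. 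Conversely, any finite $\lambda$ not on the list must contain one of the minimal infinite cases as a subpartition, contradicting Lemma \ref{reductioncomp}(3).

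\textbf{Next} I treat the tame concealed and tame-not-concealed lists. Every listed partition has length at most $4$, so each is covered either directly by Lemma \ref{reptypel2}, by Lemma \ref{reptypel3}, or by transpose symmetry (Lemma \ref{reductionsymm}); for instance $(1^3,2^3)=(3,6)^T$, $(1^4,2,3)$ and $(1,2^4)=(2^3,3)^T$, $(2^5)=(5^2)^T$ fall out of the length-$2$ and length-$3$ results once transposed. I then confirm these are exhaustive: tameness of any candidate is decided by Theorem \ref{criterionTame} (since $\A(\lambda)$ is strongly simply connected by Proposition \ref{basicproperties}), i.e.\ by the absence of a hypercritical convex subcategory from Unger's list \cite{Un}; the concealed/non-concealed split is decided by whether the algebra contains a proper tame concealed subquiver from the BHV-list \cite{Bo5,HaVo}, exactly as in the two lemmas.

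\textbf{Finally}, the wild cases: I argue that every $\lambda$ outside the finite and tame lists contains, as a subpartition (up to transpose), one of the minimal wild staircase algebras already exhibited in Lemmas \ref{reptypel2} and \ref{reptypel3} — such as $(4,6)$, $(3,7)$, $(2,3,4)$, $(1,3,5)$, $(1,2,7)$, $(2^2,6)$ — plus a short list of new minimal wild partitions of length $\geq 4$ for which I would display a vector $v\in\mathbf N^{\Q_0}$ with $q_\lambda(v)\leq -1$ and invoke Corollary \ref{criterionWild}. The main obstacle is combinatorial rather than conceptual: one must verify that the finite collection of minimal wild partitions of length $4$ and $5$ is genuinely \emph{complete}, so that their upward closure under subpartition (and transpose) covers every partition not already accounted for. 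Establishing this completeness — checking that no partition slips between the maximal tame cases and the minimal wild cases — is the delicate bookkeeping step, and it is exactly what the hierarchy diagram in Appendix \ref{hierarchy} is designed to make transparent.
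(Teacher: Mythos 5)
Your outline reproduces the paper's strategy --- squeeze every $\lambda$ between the maximal finite/tame cases of Lemmas \ref{reptypel2} and \ref{reptypel3} and a list of minimal wild cases, using Lemma \ref{reductioncomp}, transposition (Lemma \ref{reductionsymm}) and Corollary \ref{criterionWild} with explicit negative vectors --- but you explicitly defer the one step that constitutes the actual content of the theorem's proof: verifying that no partition of length $\geq 4$ slips between the tame and wild lists. Calling this ``delicate bookkeeping'' and pointing to Appendix \ref{hierarchy} is not a proof; the diagram records the outcome of that verification, it does not perform it. The paper closes the gap with a short explicit case analysis on the second-largest part: write $\lambda=(\lambda_1,\dots,\lambda_k)$ with $k\geq 4$ and $\lambda_{k-1}\geq 2$ (otherwise $\lambda=(1^{k-1},x)$ is a finite $A_n$-case). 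If $\lambda_{k-1}\geq 3$ and $\lambda_k\geq 4$, then $\lambda$ contains $(1^2,3,4)$, which is shown wild by a single new vector with $q_{\lambda}(v)=-1$; if $\lambda_k=3$, then all parts are at most $3$, so $\lambda^T$ has length $\leq 3$ and Lemma \ref{reptypel3} applies. In the remaining family $\lambda=(1^a,2^b,x)$, one transposes to length $\leq 3$ when $x\leq 3$, uses the containment of $(1,2^2,4)=(1^2,3,4)^T$ when $b\geq 2$, identifies $(1^2,2,4)$ as tame concealed via the BHV-list \cite{HaVo} when $b=1$, and kills the rest of the two-step family by one further explicit quadratic-form computation. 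This transpose-normalization, which bounds the genuinely new minimal wild input to essentially the single partition $(1^2,3,4)$ plus one computation, is the missing idea in your proposal: without it, the ``completeness of the minimal wild list'' you flag remains unestablished and the theorem unproved.

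The unexecuted bookkeeping already surfaces as concrete slips in your sketch: $(1,2^4)=(4,5)^T$, whereas $(2^3,3)^T=(1,4^2)$; several tame-list partitions have length $5$ or $6$, not ``at most $4$''; and, most relevantly, $(1^2,2,4)$ is self-transposed of length $4$, so it is \emph{not} covered by the length-$\leq 3$ lemmas plus symmetry --- it is exactly the case requiring the extra BHV-list argument above. Two further cautions. For the sporadic finite cases with $n\leq 8$ you assert that $q_{\lambda}$ is weakly positive and invoke Theorem \ref{criterionFinite}; by that very theorem weak positivity is equivalent to what is being proved, so it must itself be verified (a finite check, e.g.\ absence of critical convex subcategories \cite{Bo5,HaVo}) --- the paper instead knits the Auslander--Reiten quivers (Appendix \ref{appendix}), which moreover yields the classification of indecomposables. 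And note that containing a proper tame concealed convex subcategory from the BHV-list certifies that an algebra is \emph{not} tame concealed (failure of minimality); tame concealedness itself is certified by membership of the algebra in that list, which is how the paper handles $(3,6)$, $(1,2,6)$, $(1,3,4)$, $(2^2,5)$ and $(1^2,2,4)$.
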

\begin{proof}
All listed representation-finite cases are in fact of finite representation type:
Let $\lambda=(1^k,n-k)$ for some $k$, then we obtain an $A_n$-classification problem, thus, representation-finiteness \cite{Ga1}. Given the two symmetric cases $\lambda=(2,n-2)=(1^{n-4},2^2)^T$, finiteness follows from  Lemma \ref{reptypel2}. For every remaining finite case, the Auslander-Reiten quivers can be found in the Appendix \ref{appendix}.\\[1ex]
Assume, $l(\lambda)\leq 3$, then the claimed classification follows from Lemma \ref{reptypel2} and Lemma \ref{reptypel3}.  Thus, let without loss of generality $\lambda=(\lambda_1,..., \lambda_k)$, such that $k\geq 4$; and $\lambda_{k-1}>1$ (otherwise we arrive at a finite case).\\[1ex]
If $\lambda_{k-1}\geq 4$, then $\A(\lambda)$ is of wild representation type by Lemma \ref{reductioncomp}: The case  $\A(1,1,3,4)$ is wild by Corollary \ref{criterionWild}, since

	\[q_{\lambda}\left(\begin{array}{cccc}
	&  & &2\\
&  &2 &4\\ 
&  &4 &4\\ 
1 &2& 4& 2
\end{array}\right) = -1.  \]
If $\lambda_{k-1}=2$, then $\lambda$ is of the form $(1,...,1,2,...,2,x)$ for some $x$.
\begin{itemize}
\item If $x\in\{2,3\}$, then the transpose of every case has been considered in Lemma \ref{reptypel2} or Lemma \ref{reptypel3}.
\item If $x\geq 4$, then 
\begin{itemize}
\item for $\lambda_{k-2}=2$  the algebra is wild due to reduction by Lemma \ref{reductioncomp} since $\A((1,1,3,4)^T)=\A(1,2,2,4)$ is wild via symmetry of Lemma \ref{reductionsymm}. 
\item if $\lambda_{k-2}=1$ and $x=4$, the algebra $\A(\lambda)$ is tame concealed \cite{HaVo}.
\item for $\lambda_{k-2}=1$ and $x\geq 5$  the algebra is wild by  Corollary \ref{criterionWild}, since
	\[q_{\lambda}\left(\begin{array}{cccc}
	&  & &1\\
	&  & &2\\ 
&  & &4\\ 
&  &4 &6\\ 
2 &4& 6& 4
\end{array}\right) = -1. \]
\end{itemize} 
\end{itemize}
If $\lambda_{k-1}=3$, then (since $\A(1,1,3,4)$ is wild) $\A(\lambda)$ is wild whenever $\lambda_k\geq 4$. If $\lambda_k=3$, the transpose of $\lambda$ is of length $3$ and has, thus, been considered in Lemma \ref{reptypel3}.
\end{proof}

\begin{corollary}\label{blockstalg}
The tensor algebra $KA_m\otimes_K KA_l$  is of finite representation type if and only if  $(m,l)\in\{(1,l), (m,1), (2,2), (2,3), (3,2), (4,2), (2,4)\}$. It is tame exactly if $(m,l)\in\{(2,5), (5,2), (3,3)\}$ and wild, otherwise.
\end{corollary}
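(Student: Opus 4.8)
The plan is to deduce this entirely from Theorem \ref{reptype} by recognizing the tensor algebra as a staircase algebra attached to a \emph{rectangular} Young diagram. First I would equip $A_m$ and $A_l$ with their equioriented linear orientations $1\to 2\to\cdots$ and recall the standard description of a tensor product of two path algebras of quivers without oriented cycles: $KA_m\otimes_K KA_l$ is the path algebra of the box product of the two linear quivers, bound by the commutativity relation on each unit square. Explicitly, its vertices are the pairs $(i,j)$ with $1\le i\le m$ and $1\le j\le l$, its arrows are the horizontal and vertical steps of the $m\times l$ grid, and the images of $e_i\otimes\alpha$ and $\alpha'\otimes e_j$ enforce exactly one commutativity relation per square.

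Next I would match this grid-with-relations against the staircase data of Section \ref{SectStaircase} for the rectangular partition $\lambda=(l^m)$, i.e. the partition with $m$ parts each equal to $l$. Its Young diagram is the $m\times l$ rectangle, so by construction $\Q((l^m))$ is precisely the $m\times l$ grid with horizontal arrows $\alpha_{i,j}$ and vertical arrows $\beta_{i,j}$, and $I((l^m))$ is generated by the commutativity relations in every square. This gives the isomorphism $KA_m\otimes_K KA_l\cong \A((l^m))$, whence the two algebras share a representation type, and the corollary becomes the restriction of Theorem \ref{reptype} to rectangular partitions.

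The remaining step is bookkeeping against the three lists of Theorem \ref{reptype}. For finiteness, the rectangles covered by case \ref{1stcase} are $(l)$ (when $m=1$, of the form $(n)$), $(1^m)$ (when $l=1$, of the form $(1^k,n-k)$), and $(2^2)$ (the only rectangle among $(2,n-2)$ and $(1^{n-4},2^2)$, occurring at $n=4$); case \ref{2ndcase} contributes the rectangles with both parts at least $2$ and $n\le 8$, namely $(2^2),(3^2),(2^3),(4^2),(2^4)$, since none of the excluded partitions $(1,3,4),(2,3^2),(1,2^2,3),(1^2,2,4)$ is rectangular. This reproduces exactly the finite list $\{(1,l),(m,1),(2,2),(2,3),(3,2),(4,2),(2,4)\}$. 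For tameness, the only rectangles occurring in either tame list of Theorem \ref{reptype} are $(5^2),(3^3),(2^5)$, all lying in the tame-but-not-tame-concealed list, giving $(2,5),(3,3),(5,2)$; in particular no tensor algebra is tame concealed. Every other rectangle $(l^m)$ with $m,l\ge 2$ falls outside all three lists and is therefore wild.

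I expect the only genuinely substantive point to be the identification $KA_m\otimes_K KA_l\cong \A((l^m))$ in the first two paragraphs; once the tensor algebra is placed inside the staircase family the result is a finite case check. The one subtlety worth flagging is the orientation convention: the identification uses the equioriented $A_m$ and $A_l$ so that the box product matches the staircase orientation (all arrows pointing down-left), and it is this algebra whose representation type Theorem \ref{reptype} computes. The case enumeration itself is routine, requiring only the observation that the non-rectangular entries of the excluded finite set and of the tame lists never interfere with rectangular partitions.
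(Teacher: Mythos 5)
Your proposal is correct and follows essentially the same route as the paper, whose entire proof is the observation that $KA_m\otimes_K KA_l$ equals the $1$-step (rectangular) staircase algebra, so that the statement is the restriction of Theorem~\ref{reptype} to rectangular partitions. You merely make explicit what the paper leaves implicit — the box-product identification $KA_m\otimes_K KA_l\cong \A((l^m))$ with one commutativity relation per square, and the case-by-case check that the only rectangles in the finite and tame lists are $(l),(1^m),(2^2),(3^2),(2^3),(4^2),(2^4)$ and $(5^2),(3^3),(2^5)$ respectively — and this bookkeeping is accurate.
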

\begin{proof}
The tensor algebra $KA_m\otimes_K KA_l$ equals the $1$-step staircase algebra $\A(k^l)$, thus, the proof follows from Theorem \ref{reptype}.
\end{proof}
The hierarchy of cases is depicted in the Appendix \ref{hierarchy}.
Note that the representation type  of the staircase algebra $\A((2^k))$ has been examined - it is already known by Asashiba; and by Escolar and Hiraoka \cite{EH}.\\[1ex]
 By Lemma \ref{orbittype}, Theorem \ref{reptype} yields the following classification via orbit types.
\begin{lemma}\label{classificationType}
$\A(\lambda)$ is of finite / tame / wild representation type if and only if its orbit quiver is Dynkin / Euclidean / wild.
\end{lemma}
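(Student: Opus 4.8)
The plan is to deduce the statement by placing the classification of orbit types in Lemma~\ref{orbittype} side by side with the classification of representation types in Theorem~\ref{reptype}. Both assignments $\lambda\mapsto(\text{finite}/\text{tame}/\text{wild})$ and $\lambda\mapsto(\text{Dynkin}/\text{Euclidean}/\text{wild orbit type})$ are exhaustive and mutually exclusive---the first by Theorem~\ref{dichotomy} together with the definition of finite type, the second by construction. Hence it suffices to prove the three implications ``Dynkin $\Rightarrow$ representation-finite'', ``Euclidean $\Rightarrow$ tame'' and ``wild orbit type $\Rightarrow$ wild''; the converses then follow automatically by exhaustion.

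First I would treat Dynkin $\Rightarrow$ finite. Running through cases~1--5 of Lemma~\ref{orbittype}, the type $A_n$ occurs exactly for $(1^k,n-k)$ (with $(n)$ the case $k=0$) and $D_n$ for $(2,n-2)$ and $(1^{n-4},2^2)$; these are precisely the infinite families of item~\ref{1stcase} of Theorem~\ref{reptype}. The finitely many partitions carrying orbit type $E_6$, $E_7$ or $E_8$ all satisfy $n\le 8$ and avoid $\{(1,3,4),(2,3^2),(1,2^2,3),(1^2,2,4)\}$, so they are covered by item~\ref{2ndcase}; in either case $\A(\lambda)$ is representation-finite. For Euclidean $\Rightarrow$ tame the conceptual input is Lemma~\ref{criterionTameTilted}: in each Euclidean case in which every projective but no injective indecomposable lies in the preprojective component---all cases except $(3^3),(1,4^2),(2^3,3),(5^2)$, as recorded in Lemma~\ref{orbittype}---the algebra is tilted of Euclidean type and hence tame. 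Comparing the $\widetilde{E_7}$- and $\widetilde{E_8}$-lists term by term against the union of the tame-concealed and tame-but-not-tame-concealed lists of Theorem~\ref{reptype} confirms that both sides contain exactly the same partitions. The four exceptional partitions, where the preprojective component omits a projective and Lemma~\ref{criterionTameTilted} does not apply verbatim, are already shown tame in the proofs of Lemma~\ref{reptypel2} and Lemma~\ref{reptypel3} by the absence of a hypercritical convex subcategory. Finally, a partition with wild orbit type lies in neither the Dynkin nor the Euclidean lists, hence by the two implications just established it is neither representation-finite nor tame, so by Theorem~\ref{dichotomy} it is wild.

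The step demanding the most care is the interplay with the transpose symmetry of Lemma~\ref{reductionsymm}. The bound-quiver identification $(i,j)\mapsto(j,i)$ realises $\A(\lambda)\cong\A(\lambda^T)$, so not only the representation type but also the orbit type is transpose-invariant; this is why Lemma~\ref{orbittype} may list $(5^2)$ as the sole representative of the transpose pair $\{(5^2),(2^5)\}$, with $(2^5)$ silently carrying the same orbit type $\widetilde{E_8}$. Adjoining $(2^5)$ makes the Euclidean list transpose-closed and of the same cardinality as the union of the two tame lists, so the two collections coincide term by term. Keeping track of this identification is precisely what closes the correspondence for the cases in which the preprojective component fails to contain every projective, and completes the proof once Lemma~\ref{orbittype} and Theorem~\ref{reptype} are invoked.
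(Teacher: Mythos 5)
Your proposal is correct and takes essentially the same route as the paper, which offers no separate argument for this lemma beyond the remark that it follows by comparing Lemma \ref{orbittype} with Theorem \ref{reptype} --- exactly the term-by-term list matching you carry out, with the wild case handled by exhaustion via Theorem \ref{dichotomy}. Your extra care with the transpose pair $\{(5^2),(2^5)\}$ is warranted: $(2^5)$ is indeed absent from the explicit $\widetilde{E_8}$ list of Lemma \ref{orbittype} (whose proof even reads as if all $n\geq 10$ cases other than $(5^2)$ had wild orbit type), and your observation that the flip $(i,j)\mapsto(j,i)$ gives $\A(\lambda)\cong\A(\lambda^T)$, so that the orbit type itself is transpose-invariant, closes this gap that the paper leaves implicit.
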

These results were obtained in the $\GL_n$-setup. We think that it should be possible to generalize them to arbitrary classical Lie types by methods similar to \cite{HiRoe}.
\section{Properties depending on representation types}\label{SectCasestudy}
We have a closer look at the representation theory of staircase algebras now and divide our considerations by representation types, which are known from Theorem \ref{reptype}.
\subsection{Finite cases}\label{finiteCases}
All finite cases are listed in the table, together with  their orbit quiver $\Upsilon(\lambda)$ (see Lemma \ref{orbittype}) and the link to their Auslander-Reiten quiver $\Gamma(\lambda)$ in the Appendix \ref{appendix}). In the latter, the corresponding maximal dimension vectors are marked. Since $\Gamma(\lambda)$ always contains a sincere representation, each such algebra $\A(\lambda)$ is tilted Dynkin and the orbit quiver indicates the frame. By Theorem \ref{criterionFinite} up to isomorphism, every indecomposable appears in $\Gamma(\lambda)$; such that they can be constructed directly or by means of methods from Auslander-Reiten Theory \cite{ASS}. This way, a complete representative system of indecomposable modules is obtained for each finite case. Clearly, all orbits of a fixed dimension vector can be deduced from these by Krull-Remak-Schmidt.
By results of Bongartz \cite{Bo2}, the explicit calculation of orbit closures can nicely be done.
\begin{center}
\begin{tabular}[h]{|c|c|c|c||c|c|c|c|}
\hline
$n$ & $\lambda$  &     $\Upsilon(\lambda)$ & $\Gamma(\lambda)$ & $n$ & $\lambda$  &     $\Upsilon(\lambda)$ & $\Gamma(\lambda)$\\ 
\hline 

$n$& $\begin{array}{c}
(1^{n-k},k)
\end{array}$ 
 & $A_n$ & -

 \\
\hline

$n$& $\begin{array}{c}
(1^{n-4},2^2) \\
(2,n-2)\end{array}$
 & $D_n$ & \ref{ark26}&$7$&$\begin{array}{c}
(1,2,4) \\
(1^2,2,3)\end{array}$&

  $E_7$ & \ref{ark124}\\ \hline

$8$& $\begin{array}{c}
(1,2,5) \\
(1^3,2,3)\end{array}$
&   $E_8$  & \ref{ark125}&$7$
&$\begin{array}{c}
(3,4) \\
(1,2^3)\end{array}$& 

  $E_7$ & \ref{ark34}
 \\
\hline

$8$ &$\begin{array}{c}
 (3,5)\\
(1^2,2^3)\end{array}$ & $E_8$ & \ref{ark35}&$7$& $\begin{array}{c}
(1,3^2) \\
(2^2,3)\end{array}$
& $E_7$  & \ref{ark133}
 \\
\hline

$8$&$\begin{array}{c}
(1^2,3^2) \\
(2^2,4)\end{array}$
& $E_8$ & \ref{ark224}&$6$& $\begin{array}{c}
 (1,2,3)
\end{array}$

& $E_6$ & \ref{ark123}
 \\
\hline

$8$
 &$\begin{array}{c}
 (2^4),
(4^2)\end{array}$

 & $E_8$ & \ref{ark2222}& $6$
&$\begin{array}{c}
(2^3),
(3^2)\end{array}$

& $E_6$ & \ref{ark222}
 \\
\hline
\end{tabular}
\end{center}
\subsection{Tame concealed algebras}\label{TC}
Given a tame concealed algebra $\A(\lambda)$, we know that there is an algebra $\B$ and a preprojective $\B$-tilting module $T$, such that $\End_{\B}(T) =\A(\lambda)$. 
\begin{center}
\includegraphics[trim=130 485 185 125,clip,width=290pt]{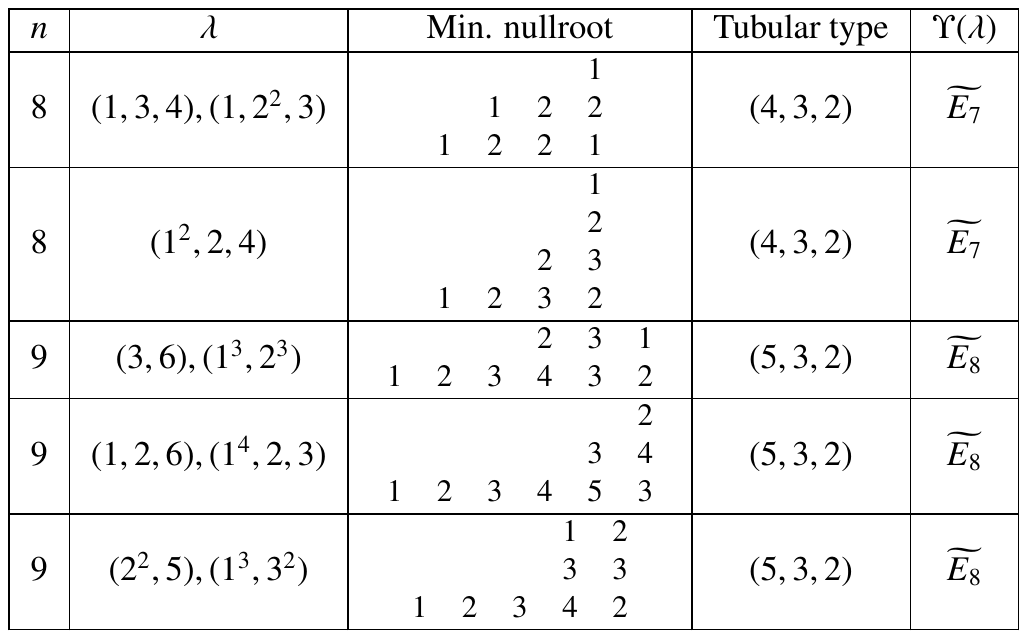}
\end{center}
The table shows every such algebra together with some information. Any tame concealed algebra has a unique one-parameter family of indecomposable modules $X$ with $\End(X) = K$, $\Ext^1(X,X) =K$, and the dimension vector of these modules is the minimal positive nullroot (see \cite{HaVo}). It generates the radical of $q_{\lambda}$. 
The orbit quiver $\Upsilon(\lambda)$ equals the frame of the tilted algebra, that is, the Gabriel quiver of $\B$ and the tubular type is the same as the tubular type of $\B$ \cite{Ri}.  Each tilting module is given by a direct sum of preprojective indecomposables and there is a stable separating tubular $\mathbf{P}_1K$-family of the tubular type of the algebra which separates the preprojective and preinjective component. In these components, all indecomposables come up (up to isomorphism). The regular component equals the $\Hom_{\B}(T,\_)$-translation of the regular component of the Euclidean quiver \cite{Ri}. The simple regular modules are those at the mouths of the tubes; and they have explicitly been calculated for $\widetilde{E_7}$ and $\widetilde{E_8}$ in \cite{DlR2}. Thus, all simple regular modules and all tubes are obtained. Note that by results of Bongartz \cite{Bo6}, orbit closures can be calculated.

\subsection{Non-concealed tame algebras}\label{TameMax}
The non-maximal non-concealed staircase algebras are corresponding to the partitions $\lambda=(2, 3^2)$, $\lambda'=(4,5)$ and $\lambda'^T=(1,2^4)$. Both are Euclidean tilted by Theorem \ref{criterionTameTilted} and Corollary \ref{criterionTiltedSincere} and their orbit type  ($\widetilde{E_7}$ for $\lambda$, $\widetilde{E_8}$ for $\lambda'$)  equals their frame.
 The tilting module is given by a direct sum of preprojective indecomposables plus at least one non-preprojective module, since the algebras are not tame concealed. Each contains a tame concealed subcategory and the radical of the quadratic form is always generated by the induced minimal nullroot, since these algebras are Euclidean tilted and, thus, their quadratic form is isometric to a quadratic form of a quiver. By Lemma \ref{criterionTameTilted} the quadratic form is non-negative and there is no quiver of non-negative quadratic form with $2$-dimensional radical.\\[1ex]
Concerning the maximal tame staircase algebras, we prove tameness without using the list of hypercritical algebras \cite{Un}, now. We know from Theorem \ref{criterionTame} that $\A(\lambda)$ is tame if and only if the quadratic form $q_{\lambda}$ is weakly non-negative. 
\begin{lemma}\label{MaxTameStraightForw3}
Let $\lambda=(3^3)$ and $\lambda'\in\{(5^2), (2^5)\}$, then 
$q_{\lambda}$ and $q_{\lambda'}$ are non-negative and 
\[\rad q_{\lambda} =\left\langle u:=\left(\begin{array}{ccc}
1&1&0\\
1&0&-1\\
0&-1&-1
\end{array} \right),~v:=\left(\begin{array}{ccc}
0&1&1\\
1&2&1\\
1&1&0
\end{array} \right)\right\rangle.\]
\[\rad q_{\lambda'} =\left\langle \left(\begin{array}{ccccc}
2&3&2&0 &-1\\
1&0&-2&-3&-2
\end{array} \right),~\left(\begin{array}{ccccc}
0&1&2	&2&1\\
1&2&2&1&0
\end{array} \right)\right\rangle.\]
In particular,  $\A(\lambda)$ and $\A(\lambda')$ are tame and non-tilted.
\end{lemma}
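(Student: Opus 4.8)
The plan is to verify non-negativity, corank, and the radicals by an explicit, self-contained computation, so that tameness follows from Theorem \ref{criterionTame} without invoking Unger's list. By Lemma \ref{reductionsymm} the case $(2^5)=(5^2)^{T}$ reduces to $(5^2)$ after relabelling the vertices, so it suffices to treat $\lambda=(3^3)$ on the $3\times 3$ grid $\Q(\lambda)$ and $\lambda'=(5^2)$ on the $2\times 5$ grid. Writing $v_{i,j}$ for the coordinate at the vertex $(i,j)$, the Tits form is
\[ q_{\lambda}(v)=\sum_{(i,j)\in\Q(\lambda)_0}v_{i,j}^{2}-\sum_{\alpha_{i,j}}v_{i,j}v_{i-1,j}-\sum_{\beta_{i,j}}v_{i,j}v_{i,j-1}+\sum_{\square}v_{i,j+1}\,v_{i-1,j}, \]
where the last sum runs over the unit squares of $Y(\lambda)$ (each carrying a single commutativity relation, so $r((i,j+1),(i-1,j))=1$ and all other $r$ vanish).

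First I would establish non-negativity together with the corank by a Lagrange reduction: completing squares successively in the $v_{i,j}$ produces a decomposition $q_{\lambda}=\sum_{k}c_k\,\ell_k^{2}$ with linearly independent integral linear forms $\ell_k$ and \emph{strictly positive} rational coefficients $c_k$, the number of summands being the rank of the form ($7$ for $(3^3)$, $8$ for $(5^2)$). The appearance of only positive coefficients both certifies $q_\lambda\ge 0$ on all of $\mathbf{Z}^{\Q(\lambda)_0}$ and shows that $\rad q_\lambda=\bigcap_k\ker\ell_k$ has dimension equal to the corank, namely $9-7=2$ and $10-8=2$ respectively. Were the form indefinite, the reduction would instead produce a negative coefficient, exhibiting a vector with $q_\lambda<0$; so this single step simultaneously tests and certifies non-negativity.

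Next I would pin down the radical. Solving the homogeneous system $b_\lambda(v,e_{(i,j)})=0$ over all vertices (where $b_\lambda$ is the polarization of $q_\lambda$) exhibits its rational solution space as two-dimensional; one checks directly that the two displayed vectors satisfy $q_\lambda=0$ and $b_\lambda(v,-)\equiv 0$, hence lie in $\rad q_\lambda$, and that they are linearly independent, so they span the radical over $\mathbf{Q}$. It remains to see that they form a $\mathbf{Z}$-basis of the (automatically saturated) integral radical; concretely this is a unimodularity check, namely that the $2\times\sharp\Q(\lambda)_0$ coordinate matrix of the two vectors has a $2\times 2$ minor equal to $\pm1$. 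For $(3^3)$ this holds, the minor on the coordinates $(1,1),(1,3)$ being $1$, and the general solution of the kernel equations is an integral combination of $u$ and $v$, so $\rad q_\lambda=\langle u,v\rangle$ exactly. I expect this saturation step to be the one requiring genuine care, since it is the only point where the precise integral (as opposed to rational) structure of the radical enters.

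Finally, non-negativity gives weak non-negativity, so by Proposition \ref{basicproperties} (strong simple connectedness) together with Theorem \ref{criterionTame} the algebras $\A(\lambda)$ and $\A(\lambda')$ are tame. For non-tiltedness I would argue via the corank exactly as in Section \ref{TameMax}: since $\A(\lambda)$ has global dimension two with $\Ext^2$ spanned by the commutativity relations, its Tits form $q_\lambda$ coincides with the homological Euler form, and a tilting equivalence induces an isometry of Euler forms with a connected hereditary algebra. The Euler form of such an algebra is non-negative only for Dynkin (corank $0$) or Euclidean (corank $1$) type, so no connected quiver carries a non-negative form with a two-dimensional radical; as $q_\lambda$ is non-negative of corank $2$, neither $\A(\lambda)$ nor $\A(\lambda')$ can be tilted. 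The bulk of the work is the Lagrange reduction, while the only conceptually delicate point is the saturation of the radical lattice.
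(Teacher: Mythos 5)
Your route is genuinely different from the paper's. The paper diagonalizes nothing: it first notes that $\A(\lambda'')$ for $\lambda''=(2,3^2)$ (resp.\ $(4,5)$ in the second case) is Euclidean tilted, hence has non-negative Tits form with one-dimensional radical by Lemma \ref{criterionTameTilted}; it then observes that $u$ is $b_{\lambda}$-orthogonal to everything and has entry $1$ at the deleted corner vertex, so that $q_{\lambda}(x)=q_{\lambda''}(x-x_1u)$, where $x_1$ is the coordinate at that corner. Non-negativity and the radical description are thus inherited from the smaller algebra, with integrality for free because $u$ has a coordinate equal to $1$ and $v$ is the zero-extension of the generator of $\rad q_{\lambda''}$. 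Your Lagrange reduction buys self-containedness (no appeal to the tilted convex subcategory, and a built-in certificate of the corank) at the price of a square-completion you only sketch; your tameness conclusion via Theorem \ref{criterionTame} and your non-tiltedness argument are the same as the paper's, and your extra justification that the Tits form is the Euler form (global dimension $2$ for $KA_m\otimes KA_l$) is correct and merely implicit in the paper. The reduction of $(2^5)$ to $(5^2)$ by transposing the grid is also fine, since the relabelling identifies the two forms.

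The one genuine snag sits exactly where you predicted: the saturation step, and for $\lambda'=(5^2)$ it \emph{fails}. Write $a,b$ for the two displayed vectors for $(5^2)$. They are congruent modulo $2$, so every $2\times2$ minor of their coordinate matrix is even and no unimodular minor exists. Indeed
\[
c:=\tfrac{1}{2}(a+b)=\left(\begin{array}{ccccc}1&2&2&1&0\\1&1&0&-1&-1\end{array}\right)
\]
is integral with $q_{\lambda'}(c)=0$ (direct check: $14-2-10-2=0$), yet $c\notin\mathbf{Z}a+\mathbf{Z}b$ (compare the coordinate where $a=-2$, $b=0$, $c=-1$). So $\mathbf{Z}a+\mathbf{Z}b$ has index $2$ in $\rad q_{\lambda'}=\{x\in\mathbf{Z}^{10}\mid q_{\lambda'}(x)=0\}$; a correct $\mathbf{Z}$-basis is $\{c,b\}$ (note $a=2c-b$), and $c$, not $a$, is the vector with entry $1$ at the deleted corner which the paper's ``analogous'' argument for $(5^2)$ actually produces. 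For $(3^3)$ your unimodularity check does go through ($u$ and $v$ have coordinate pairs $(1,0)$ and $(0,1)$ at the vertices you name), so your method, carried out honestly, proves the displayed equality for $(3^3)$ but only the rational version for $(5^2)$ and $(2^5)$ --- which is all the paper ever uses: corank $2$ gives non-tiltedness, and $\corank^0 q_{\lambda'}\leq 1$ in Lemma \ref{finiteGrowth} is insensitive to the integral structure. You should either record the $(5^2)$ radical with the corrected basis $\{c,b\}$ or weaken the claim to equality of rational spans; as written, your proposal asserts a unimodularity check that cannot succeed there.
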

\begin{proof}
 Let $\lambda'':=(2,3^2)$, then $\A(\lambda'')$ is tilted by Lemma \ref{criterionTiltedSincere}. It is Euclidean tilted, since its preprojective component is of type $\widetilde{E_7}$ ("contains a $\widetilde{E_7}$-slice") and contains all projective indecomposables. By Lemma \ref{criterionTameTilted}, $q_{\lambda''}$ is
non-negative.\\[1ex]
Note that $u$ is orthogonal to every other vector concerning the symmetric form $q_{\lambda}(\_,\_)$; in particular, $q_{\lambda}(u)=0$.\\[1ex] Let $x\in \mathbf{R}^9= \mathbf{R}^{\Q(\lambda)_0}$, then $\tilde{x}:=x-x_1u \in\mathbf{R}^8\cong \mathbf{R}^{\Q(\lambda'')_0}$ and $q_{\lambda}(x)=q_{\lambda''}(\tilde{x})\geq 0$. If $x\in \rad q_{\lambda}$, then $\widetilde{x}\in\rad q_{\lambda''}$,  and, thus, $\widetilde{x}=\xi\cdot v$ for some $\xi$ as has been shown before.\\
 Since $q_{\lambda}$ is non-negative, $\A(\lambda)$ is tame by Lemma \ref{criterionTame}. If $\A(\lambda)$ was tilted, its quadratic form would be isometric to a quadratic form of a quiver. But there is no such non-negative form with $2$-dimensional radical.\\
 The proof for $\lambda'$ can be done analogously.
\end{proof}
Tameness of $(1,4^2)=(2^3,3)^T$ follows from tameness of $(5,5)$ by Lemma \ref{reductioncomp}, since $q_{(1,4^2)} = q_{(4,5)}$. Note that the radical of the quadratic form $q_{\lambda}$  for $\lambda=(1,4,4)$ is $1$-dimensional, since the algebra $\A(4,5)$ is tilted. The orbit type of the algebras $\A(1,4^2)$, $\A(2^3,3)$ and $\A(3^3)$ is $\widetilde{E_7}$ and the orbit type of $\A(5^2)$ and $\A(2^5)$ is $\widetilde{E_8}$. \\[1ex]
By Lemma \ref{criterionTameTilted} and Lemma \ref{MaxTameStraightForw3}, we  have proved that the Tits form of every tame algebra $\A(\lambda)$ is non-negative.
 
\begin{lemma}\label{finiteGrowth}
Let $\A(\lambda)$ be tame, then it has finite growth.
\end{lemma}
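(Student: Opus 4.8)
The plan is to invoke the finite-growth criterion of Proposition \ref{criterionFinGrowth}, which states that for a strongly simply connected algebra $\A$, finite growth is equivalent to $q_{\A}$ being weakly non-negative together with $\corank^0 q_{\A}\leq 1$. Since $\A(\lambda)$ is strongly simply connected by Proposition \ref{basicproperties}, and since tameness is equivalent to weak non-negativity of $q_{\lambda}$ by Theorem \ref{criterionTame}, the first hypothesis is automatic. Thus the entire content of the lemma reduces to verifying the single inequality $\corank^0 q_{\lambda}\leq 1$ for every tame $\lambda$; that is, I must show that the connected halfspace of rational isotropic roots of $q_{\lambda}$ has dimension at most one.

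First I would separate the tame cases into the tame concealed ones and the three maximal (non-concealed) families. For the tame concealed algebras, the situation is clean: a tame concealed algebra is Euclidean tilted with a preprojective tilting module, so its Tits form is $\mathbf{Z}$-equivalent to the Euler form of the underlying Euclidean quiver (of type $\widetilde{E_7}$ or $\widetilde{E_8}$, by Lemma \ref{orbittype}). Such a form has a one-dimensional radical generated by the minimal positive nullroot, as recorded in Section \ref{TC}. Hence $\corank^0 q_{\lambda}=1$ in each of these cases, and finite growth follows immediately from Proposition \ref{criterionFinGrowth}.

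For the non-concealed tame algebras I would lean directly on the radical computations already carried out. In the analysis of Section \ref{TameMax} and in Lemma \ref{MaxTameStraightForw3} the radicals $\rad q_{\lambda}$ are exhibited explicitly: for $\lambda\in\{(2,3^2),(4,5),(1,2^4),(1,4^2),(2^3,3)\}$ the radical is one-dimensional (these algebras are Euclidean tilted, so their forms are again isometric to quiver forms), while for $\lambda\in\{(3^3),(5^2),(2^5)\}$ the radical is two-dimensional with the stated generators. The key observation is that a two-dimensional radical does not by itself force $\corank^0 q_{\lambda}=2$: the isotropic corank measures the maximal dimension of a connected halfspace lying inside the \emph{rational positive} cone $\rad^0_{\mathbf{Q}} q_{\lambda}$, not the linear dimension of the full radical. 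So for $(3^3)$, $(5^2)$ and $(2^5)$ I would examine the explicit generators $u,v$ (and the analogous pair for $\lambda'$) and check that no two-dimensional subcone of their real span meets $\mathbf{Q}_+^{\Q(\lambda)_0}$ in a halfspace: concretely, since $u$ has mixed signs, the set of coefficients $(s,t)$ with $su+tv\geq 0$ componentwise is a ray rather than a halfplane, forcing $\corank^0 q_{\lambda}\leq 1$.

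The main obstacle I anticipate is precisely this last point for the three algebras with two-dimensional radical. One must argue carefully that although $\dim_{\mathbf{Z}}\rad q_{\lambda}=2$, the positivity constraints cut the nonnegative isotropic roots down to (essentially) a single ray, so that the connected halfspace in $\rad^0_{\mathbf{Q}} q_{\lambda}$ is at most one-dimensional. This is a finite, case-by-case sign analysis using the explicit generators from Lemma \ref{MaxTameStraightForw3}, and it is the only place where genuine computation is needed; everything else is a direct citation of Proposition \ref{criterionFinGrowth} together with the Euclidean-tilted structure already established.
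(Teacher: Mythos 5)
Your proposal is correct and follows essentially the same route as the paper: both reduce the lemma to Proposition \ref{criterionFinGrowth}, use the one-dimensionality of $\rad q_{\lambda}$ for every tame case except $\lambda\in\{(3^3),(5^2),(2^5)\}$, and settle those three via the explicit generators from Lemma \ref{MaxTameStraightForw3}. The only difference is that where the paper writes ``Clearly, $\corank^0 q_{\lambda}\leq 1$,'' you actually perform the sign analysis (the mixed-sign generator forces the coefficient $s=0$, so the nonnegative isotropic roots form a single ray), which is precisely the computation the paper leaves implicit.
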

\begin{proof}
Let $\A(\lambda)$ be a tame staircase algebras for $\lambda\notin\{(3^3),(2^5),(5^2)\}$. Then $\rad q_{\lambda}$ is $1$-dimensional whence Proposition \ref{criterionFinGrowth} yields the claim. Let $\lambda\in\{(3^3),(2^5),(5^2)\}$, then we know explicit generators of $\rad q_{\lambda}$ from Lemma \ref{MaxTameStraightForw3}. Clearly, $\corank^0 q_{\lambda}\leq 1$, such that the claim follows from Proposition \ref{criterionFinGrowth}, as well.
\end{proof}
\subsection{Minimal wild cases}\label{minWildCases}
The minimal wild cases can be found in the graphic in Appendix \ref{hierarchy}. Some of them are hypercritical (see \cite{Un}), namely $(1,3,5)$, $(1^2,2,5)$, $(3,7)$, $(2^2,6)$, $(1,2,7)$ and their transposes. Furthermore, all orbit quivers of the minimal wild cases are of extended Euclidean type $\widetilde{\widetilde{E_7}}$ or $\widetilde{\widetilde{E_8}}$.\\[1ex]
Let $n\geq 2$, then concrete $n$-parameter families can be produced nicely. Let $\A(\lambda)$ be a minimal wild staircase algebra and let $\A(\lambda')$ be a convex subcategory which is tame concealed and is obtained from $\A(\lambda)$ by cancelling a source vertex $x$ in which one arrow $\alpha: x\rightarrow y$ starts. \\[1ex]
Let $M'$ be a preprojective indecomposable $\A(\lambda')$-representation, such that $\dim_K M'_y=n+1$. Then the $\A(\lambda)$-representations defined by $M_i =M'_i$ if $i\neq x$ and $M_x=K$ together with the induced maps of $M'$ and the embedding \[M_{\alpha}: K\xrightarrow{\left(\begin{array}{c}
a_1\\
\vdots\\
a_{n+1}\\
\end{array}\right)} M_x\]
gives a $\mathbf{P}^{n}$-family of pairwise non-isomorphic indecomposables.
\begin{remark}\label{explicit2Param}
Explicit $2$-parameter families are induced by these dimension vectors:
\begin{tabular}{|c|c|c|c|c|}
\hline
 \begin{footnotesize}\begin{tikzpicture}
\matrix (m) [matrix of math nodes, row sep=0.2em,
column sep=0.1em, text height=0.79ex, text depth=0.1ex]
{    & 2& \textbf{3}\\  
 2 & 5& 7\\
  \textbf{3}&  7&  \\
 };
\end{tikzpicture}  \end{footnotesize}& 

\begin{footnotesize}\begin{tikzpicture}
\matrix (m) [matrix of math nodes, row sep=0.2em,
column sep=0.1em, text height=0.79ex, text depth=0.1ex]
{&&& \textbf{3}\\
 &2& 5& 6 \\
\textbf{3}& 6 &  7& 4\\
 };
\end{tikzpicture}  \end{footnotesize}

& \begin{footnotesize}\begin{tikzpicture}
\matrix (m) [matrix of math nodes, row sep=0.2em,
column sep=0.1em, text height=0.79ex, text depth=0.1ex]
{ & && 5& 8& 6& \\
\textbf{3}& 6& 9& 12&  10& 4\\};
\end{tikzpicture}  \end{footnotesize} 
& \begin{footnotesize}\begin{tikzpicture}
\matrix (m) [matrix of math nodes, row sep=0.2em,
column sep=0.1em, text height=0.79ex, text depth=0.1ex]
{&&&  \textbf{3}\\
 &&& 6\\
 && 5& 9\\
\textbf{3} & 6 & 9& 7\\
 };
\end{tikzpicture}  \end{footnotesize}&

 \begin{footnotesize}\begin{tikzpicture}
\matrix (m) [matrix of math nodes, row sep=0.2em,
column sep=0.1em, text height=0.79ex, text depth=0.1ex]
{ & && & & 6& \\
& && & 8& 12& \\
\textbf{3}& 6& 9& 12&  15& 10\\};
\end{tikzpicture}  \end{footnotesize}  \\\hline
\end{tabular}
(the marked entries show, where an embedding of $K$ leads to a $2$-parameter family). This way, for each wild staircase algebra, a $2$-parameter family can be constructed.
\end{remark}
\section{Results for graded nilpotent pairs}\label{SectGradedNilp}
We obtain certain results for graded nilpotent pairs, which immediately follow from our examinations of staircase algebras. Let $\lambda$ be a partition and 
let $V$ be a bigraded finite-dimensional vector space of shape $\lambda$.

\begin{theorem}\label{gradedLambda}
The number of $\lambda$-graded nilpotent pairs (up to Levi-base change) is finite if and only if 
\begin{enumerate}
 \item $\lambda\in\{(n),~ (1^k,n-k),~ (2,n-2),~ (1^{n-4},2^2)\}$ for some $k\leq n$,
\item  $n\leq 8$ and $\lambda\notin\{(1,3,4),~ (2,3^2),~ (1,2^2,3),~ (1^2,2,4)\}$.
\end{enumerate}
\end{theorem}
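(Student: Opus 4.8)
The plan is to read the statement as the Lie-theoretic shadow of the representation-finiteness classification for staircase algebras that has already been established. The key is the translation recorded in Section \ref{link}: by Theorem \ref{gradedCriterionLambda}, there are---modulo Levi-base change---only finitely many $\lambda$-graded nilpotent pairs if and only if the staircase algebra $\A(\lambda)$ is of finite representation type. So I would first invoke this equivalence to turn the geometric finiteness question into a purely algebraic one about $\rep_K\A(\lambda)$, using that, as explained in Section \ref{link}, a graded nilpotent pair of shape $\lambda$ is nothing but a representation of $\A(\lambda)$ and that the $\GL_{\dimv V}$-orbits are exactly its isomorphism classes.

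Having made this reduction, I would simply apply the representation-finite part of Theorem \ref{reptype}. Its first item states that $\A(\lambda)$ is representation-finite precisely when $\lambda\in\{(n),~(1^k,n-k),~(2,n-2),~(1^{n-4},2^2)\}$ for some $k\leq n$, or when $n\leq 8$ and $\lambda\notin\{(1,3,4),~(2,3^2),~(1,2^2,3),~(1^2,2,4)\}$. These are verbatim the conditions (1) and (2) of the present statement, so concatenating the two equivalences yields the claim in both directions at once.

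The main obstacle does not lie in this argument at all---it has been discharged in the proof of Theorem \ref{reptype}, where the genuine case analysis takes place: the reduction Lemmas \ref{reductioncomp} and \ref{reductionsymm}, the base cases of length two and three (Lemmas \ref{reptypel2} and \ref{reptypel3}), and the inductive treatment of $\lambda$ of length at least four via Corollary \ref{criterionWild} and the BHV-list. The only point I would verify carefully here is that Theorem \ref{gradedCriterionLambda} is applied with the correct reading of ``$\lambda$-graded'': a pair of shape exactly $\lambda$ corresponds to a representation of $\A(\lambda)$ with nonzero dimension at every vertex, i.e.\ a sincere one. In the representation-infinite direction I would therefore confirm that infinitude persists under this sincerity constraint---for instance, by forming the direct sum of a one- or two-parameter family of indecomposables with a fixed sincere module to produce infinitely many pairwise non-isomorphic full-support representations---so that the ``only if'' direction of the equivalence is not weakened by restricting to genuine shape-$\lambda$ gradings.
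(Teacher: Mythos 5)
Your proposal is correct and is essentially the paper's own proof: the paper likewise just concatenates Theorem \ref{gradedCriterionLambda} (finiteness of $\lambda$-graded pairs modulo Levi-base change $\Leftrightarrow$ $\A(\lambda)$ representation-finite) with the representation-finite part of Theorem \ref{reptype}, whose conditions match the statement verbatim. Your closing precaution about the ``only if'' direction is a sound extra check the paper leaves implicit---note only that shape exactly $\lambda$ forces nonzero dimension just at the corner boxes of $Y(\lambda)$, not full sincerity, though your fix of adding a fixed sincere summand to a parameter family handles this in any case.
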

\begin{proof}
Follows from Theorem \ref{gradedCriterionLambda} and Theorem \ref{reptype}: The staircase algebra $\A(\lambda)$ has finite representation type if and only if the number of isomorphism classes of representations is finite. The latter correspond to $\lambda$-graded nilpotent pairs (up to Levi-base change) bijectively.
\end{proof}
If $\A(\lambda)$ is tame, then there are only one-parameter families of $\lambda$-graded nilpotent pairs (up to Levi-base change).
\begin{lemma}\label{gradedTame}
Assume that $\lambda\in\{(1,3,4),~ (1,2^2,3),~ (1^2,2,4),~(3,6),~(1^3,2^3),~(1,2,6),$\\
$(1^4,2,3),~(2^2,5),~(1^3,3^2)\}$. Then there are only finitely many graded nilpotent pairs on $V$ (modulo base change in the homogeneous components) if and only if $\underline{\dim}V$ does not contain a minimal nullroot as in \ref{TC}.
\end{lemma}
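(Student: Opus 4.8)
The plan is to pass from graded nilpotent pairs to representations and then to exploit the module theory of tame concealed algebras recalled in Section \ref{TC}. First I would note that every partition $\lambda$ occurring in the statement is tame concealed by Theorem \ref{reptype}; hence $\rad q_\lambda$ is one-dimensional, generated by the minimal positive nullroot $\delta$ described in \ref{TC}, and I read the phrase ``$\dimv V$ contains a minimal nullroot'' as $\dimv V - \delta \in \mathbf{N}^{\Q(\lambda)_0}$, i.e.\ $\delta \le \dimv V$ componentwise. By Lemma \ref{gradedFiniteDim} together with its subsequent Corollary, the number of graded nilpotent pairs on $V$ modulo Levi-base change is finite if and only if there are only finitely many isomorphism classes in $\rep_K \A(\lambda)(\df)$, where $\df := \dimv V$. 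So the statement reduces to the purely representation-theoretic claim that $\rep_K\A(\lambda)(\df)$ has finitely many isomorphism classes if and only if $\delta \not\le \df$.

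For the direction ``$\delta \le \df$ implies infinitely many classes'', I would use the stable separating tubular $\mathbf{P}_1K$-family from \ref{TC}. All but finitely many of its members $M_\mu$ (for $\mu \in \mathbf{P}_1K$) are pairwise non-isomorphic indecomposable regular modules of dimension vector $\delta$. Writing $\df = \delta + \df'$ with $\df' \in \mathbf{N}^{\Q(\lambda)_0}$ and fixing any module $N$ of dimension $\df'$ (for instance the semisimple one, which is a legitimate representation since all commutativity relations vanish on zero maps), the modules $M_\mu \oplus N$ are pairwise non-isomorphic of dimension $\df$ by Krull--Remak--Schmidt. This produces a one-parameter family and hence infinitely many isomorphism classes.

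For the converse I would argue that when $\delta \not\le \df$ no representation of dimension $\df$ can have a summand from a homogeneous (rank-one) tube: such a summand would have dimension vector $m\delta$ for some $m \ge 1$, forcing $m\delta \le \df$ and in particular $\delta \le \df$, a contradiction. Thus every indecomposable summand of a module of dimension $\df$ lies in the preprojective component, in the preinjective component, or in one of the finitely many exceptional tubes of rank $>1$. Using the structure recalled in \ref{TC}, each of these families contains only finitely many indecomposables of dimension vector $\le \df$: the preprojective and preinjective components are directed with dimensions increasing along the $\tau^{-1}$- resp.\ $\tau$-orbits, and the exceptional tubes have finite rank. Hence there is a finite set $\mathcal{I}$ of indecomposables into which every such module decomposes, and by Krull--Remak--Schmidt there are only finitely many ways to write $\df$ as a sum of the vectors $\dimv X$ with $X \in \mathcal{I}$; so $\rep_K\A(\lambda)(\df)$ has finitely many isomorphism classes.

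The main obstacle is the converse, and more precisely the bookkeeping that isolates the homogeneous tubes as the sole source of infinite families. The subtlety is that the simple regular modules at the mouths of the exceptional tubes have dimension vectors strictly smaller than $\delta$, so their presence is compatible with $\delta \not\le \df$; what must be invoked is that these tubes have finite rank and therefore contribute only finitely many rigid regulars in each bounded dimension, in contrast with the rank-one tubes whose $m\delta$-modules vary in a $\mathbf{P}_1K$-family. This is exactly the content of Ringel's description of tame concealed module categories as summarised in \ref{TC}, and once it is applied the finiteness count is routine.
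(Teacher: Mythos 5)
Your proposal is correct and follows essentially the same route as the paper: the paper's proof likewise reduces via Lemma \ref{gradedFiniteDim} and its Corollary to counting isomorphism classes in $\rep_K\A(\lambda)(\dimv V)$ and then simply cites, in two lines, the standard fact about tame concealed algebras that a dimension vector admits infinitely many classes if and only if it contains the minimal nullroot (the paper's proof text actually says ``does not contain'', an evident typo, and your reading is the intended one). The only difference is that you supply the proof of this cited fact---homogeneous tubes give the $\mathbf{P}_1K$-family when $\delta\leq\df$, and the finiteness of preprojectives, preinjectives and exceptional-tube regulars of bounded dimension gives the converse---which is exactly the content of Ringel's structure theory as summarized in Section \ref{TC}.
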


\begin{proof}
If $\A(\lambda)$ is tame concealed, then the number of isomorphism classes of a fixed dimension vector $\underline{d}$ is infinite if and only if $\underline{d}$ does not contain a minimal nullroot. Thus, the claim follows from Theorem \ref{reptype}.
\end{proof}

\begin{lemma}\label{gradedInfinite}
If $\underline{\dim}V$ contains a minimal nullroot (see Section \ref{SectCasestudy} for the explicit list), then the number of graded nilpotent pairs on $V$ is (up to isomorphism) infinite.
\end{lemma}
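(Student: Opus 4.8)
The plan is to reduce the assertion to a statement about module categories and then to exhibit an explicit one-parameter family directly. By the Corollary following Lemma \ref{gradedFiniteDim}, the graded nilpotent pairs on $V$ modulo Levi-base change are in bijection with the isomorphism classes of $\A(\lambda(V))$-representations of dimension vector $\df:=\dimv V$. Hence it suffices to produce infinitely many pairwise non-isomorphic representations in $\rep_K\A(\lambda(V))(\df)$.

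By hypothesis $\df$ dominates (componentwise, after extension by zero) a minimal nullroot $\delta$ of one of the tame convex subcategories $\A(\mu)$, $\mu\leq\lambda:=\lambda(V)$, listed in Section \ref{SectCasestudy}. Since $\A(\mu)$ is tame (concealed), it carries a $\mathbf{P}_1K$-family $\{X_t\}_t$ of pairwise non-isomorphic indecomposable modules of dimension vector $\delta$ — the homogeneous (simple) regular modules generating $\rad q_{\mu}$, cf. Section \ref{TC} and Lemma \ref{MaxTameStraightForw3}. Extending each $X_t$ by zero outside the support of $\mu$ turns it into an $\A(\lambda)$-module; as in the ``expanding with zero'' direction of Lemma \ref{reductioncomp}, this embedding is fully faithful, so the $X_t$ remain indecomposable and pairwise non-isomorphic as $\A(\lambda)$-modules.

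First I would fix any representation $N$ with $\dimv N=\df-\delta\geq 0$, for instance $N=\bigoplus_{(i,j)}S(i,j)^{(\df-\delta)_{i,j}}$, and set $M_t:=X_t\oplus N$, so that $\dimv M_t=\df$ for every $t$. It then remains to see that infinitely many of the $M_t$ are pairwise non-isomorphic, and here I would invoke Krull--Remak--Schmidt: $N$ has only finitely many indecomposable direct summands, so $X_t$ is isomorphic to a summand of $N$ for only finitely many values of $t$; for each of the remaining (infinitely many) parameters, $M_t\cong M_s$ forces the indecomposable decompositions to agree, whence $X_t\cong X_s$ and $t=s$. This yields infinitely many isomorphism classes of dimension $\df$, and therefore infinitely many graded nilpotent pairs on $V$ up to Levi-base change.

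The only genuinely delicate point is the cancellation step: a priori some members $X_t$ of the family could occur among the indecomposable summands of the auxiliary module $N$, which would spoil a naive injectivity argument. This is harmless because $N$ is a single fixed module with finitely many summands, whereas the family $\{X_t\}$ is infinite, so discarding the finitely many offending parameters leaves an infinite supply of genuinely distinct $M_t$. The remaining ingredients — realizability of $\df-\delta$ by a direct sum of simples, and the fact that extension by zero along a convex subcategory preserves both indecomposability and non-isomorphism — are routine and already used implicitly in Lemma \ref{reductioncomp}.
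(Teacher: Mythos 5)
Your proof is correct and follows exactly the route the paper intends: in fact the paper states Lemma \ref{gradedInfinite} without any written proof, treating it as an immediate consequence of Lemma \ref{gradedFiniteDim} (and its corollary) together with the one-parameter families of indecomposables at the minimal nullroots of the tame concealed convex subcategories recorded in Section \ref{TC} (cf.\ the proof of Lemma \ref{gradedTame}). Your write-up supplies precisely the details the paper omits --- the extension by zero along a convex subcategory $\A(\mu)\subseteq\A(\lambda)$, the padding by a fixed module of dimension vector $\dimv V-\delta$, and the Krull--Remak--Schmidt cancellation discarding the finitely many parameters for which $X_t$ could occur in $N$ --- and each of these steps is sound.
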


\appendix
 \section{Auslander-Reiten quivers}\label{appendix}
 
 \subsection[The case (3,4)]{The case $\lambda=(3,4)$}\label{ark34}
\begin{center}
\includegraphics[trim=122 520 155 130, clip,width=320pt]{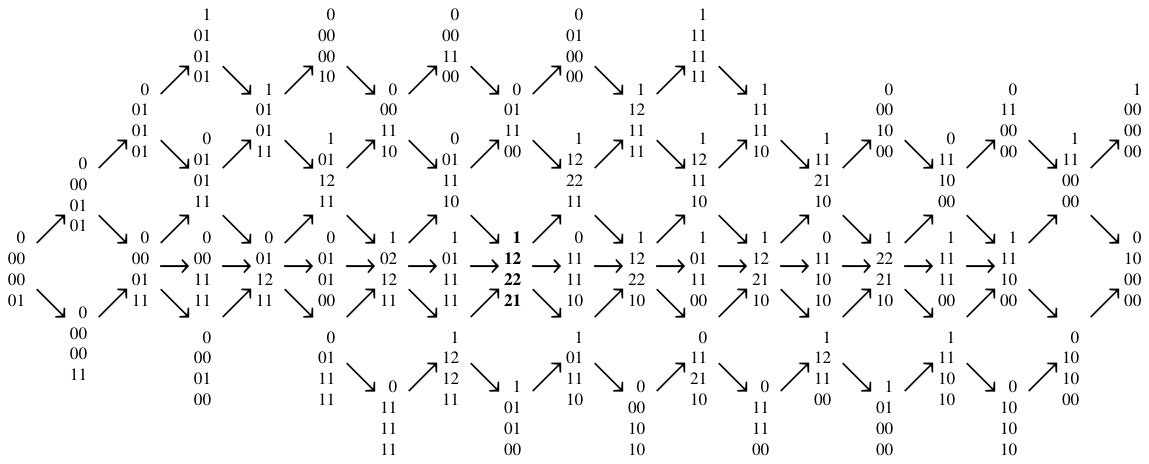}
\end{center}
 
\subsection[The case (2,6)]{The case $\lambda=(2,6)$}\label{ark26} Generalizes easily to $\lambda=(1,...,1,2,2)$ and $\lambda=(2,n-2)$. 
\begin{center}
\includegraphics[trim=112 440 135 130, clip,width=310pt, height=140pt]{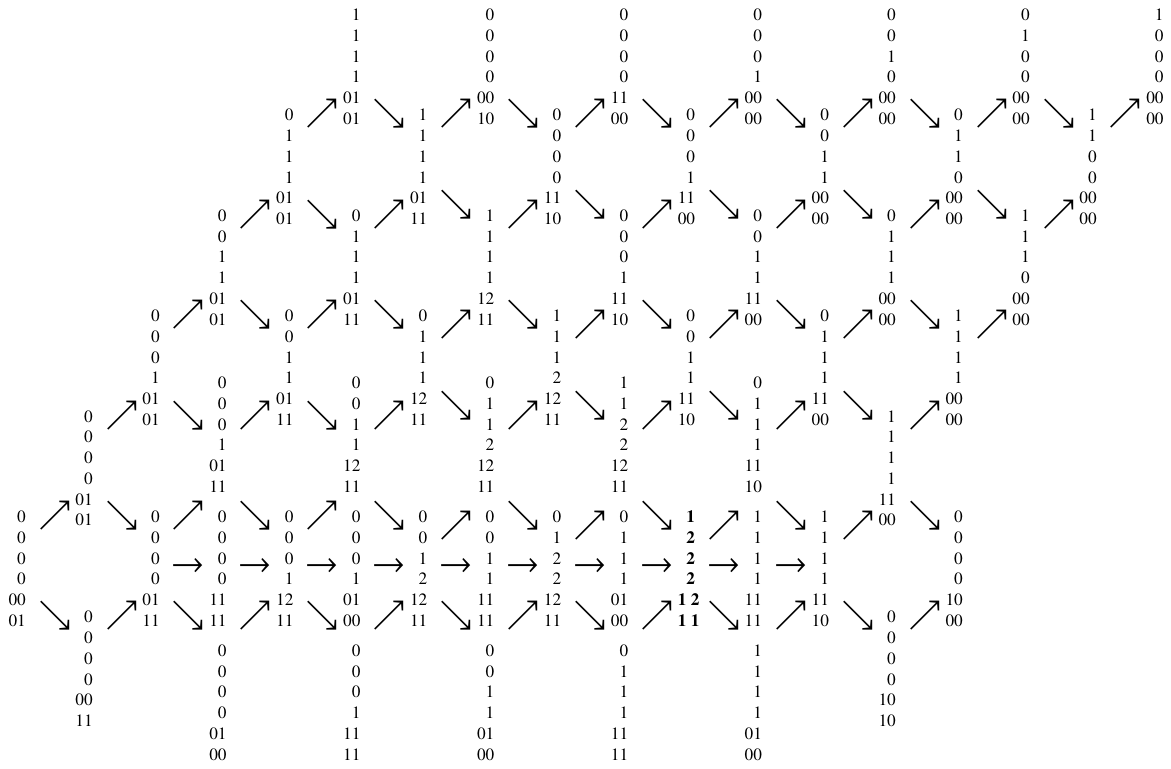}
\end{center}

\subsection[The case (1,2,3)]{The case $\lambda=(1,2,3)$}\label{ark123}
\begin{center}
\includegraphics[trim=118 555 235 125, clip,width=250pt,height=90pt]{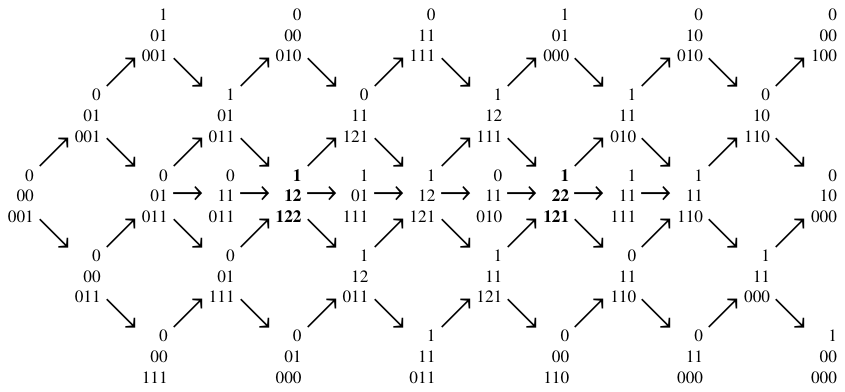}
\end{center}

\subsection[The case (2,2,2)]{The case $\lambda=(2,2,2)$}\label{ark222}

\begin{center}
\includegraphics[trim=138 585 234 102, clip,width=200pt, height=90pt]{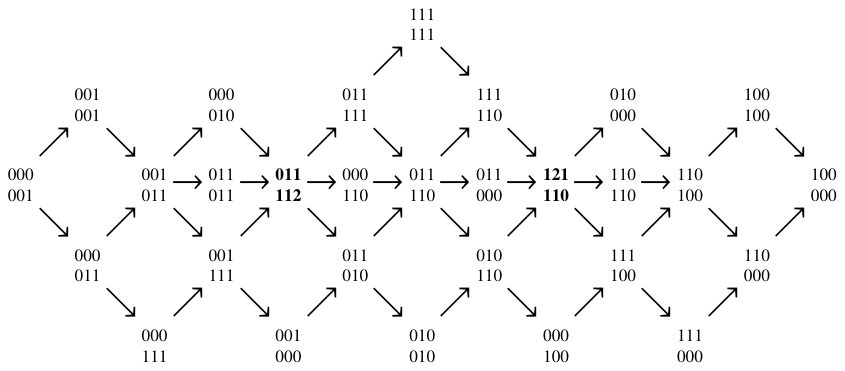}
\end{center}

\subsection[The case (1,3,3)]{The case $\lambda=(1,3,3)$}\label{ark133}
\begin{center}
\includegraphics[trim=118 520 100 130, clip,width=320pt, height=110pt]{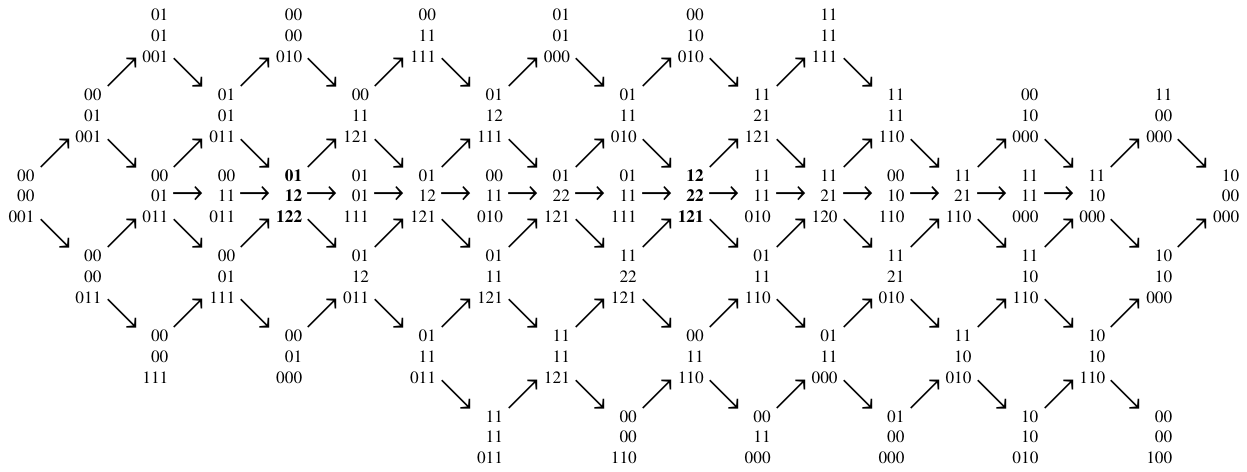}
\end{center}

\subsection[The case (3,5)]{The case $\lambda=(3,5)$}\label{ark35}

\begin{center}
\includegraphics[trim=8 480 0 135, clip,width=380pt, height=100pt]{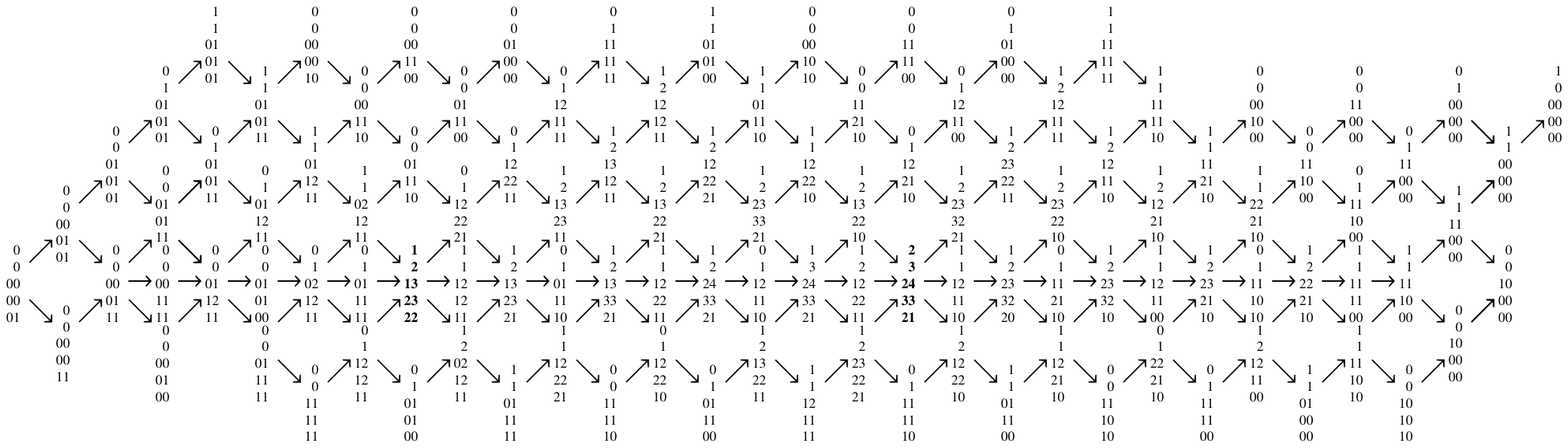}
\end{center}

\subsection[The case (1,2,4)]{The case $\lambda=(1,2,4)$}\label{ark124}
\begin{center}
\includegraphics[trim=118 520 135 130, clip,width=320pt, height=89pt]{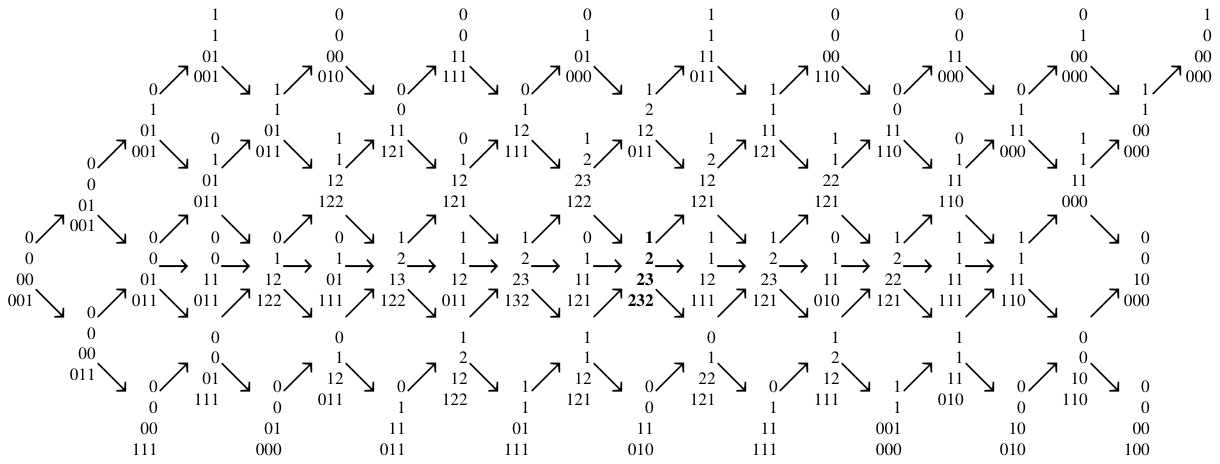}
\end{center}

\subsection[The case (1,2,5)]{The case $\lambda=(1,2,5)$}\label{ark125}
\begin{center}
\includegraphics[trim=8 480 0 135,width=380pt, height=98pt]{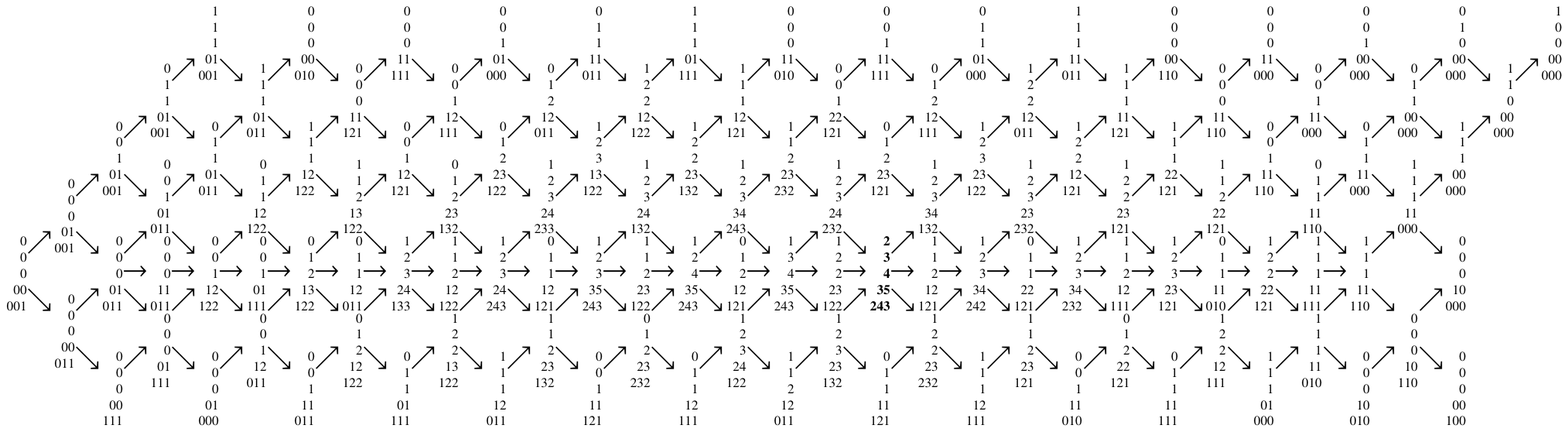}
\end{center}

\subsection[The case (2,2,4)]{The case $\lambda=(2^2,4)$}\label{ark224}

\begin{center}
\includegraphics[trim=8 480 0 139,clip,width=380pt, height=98pt]{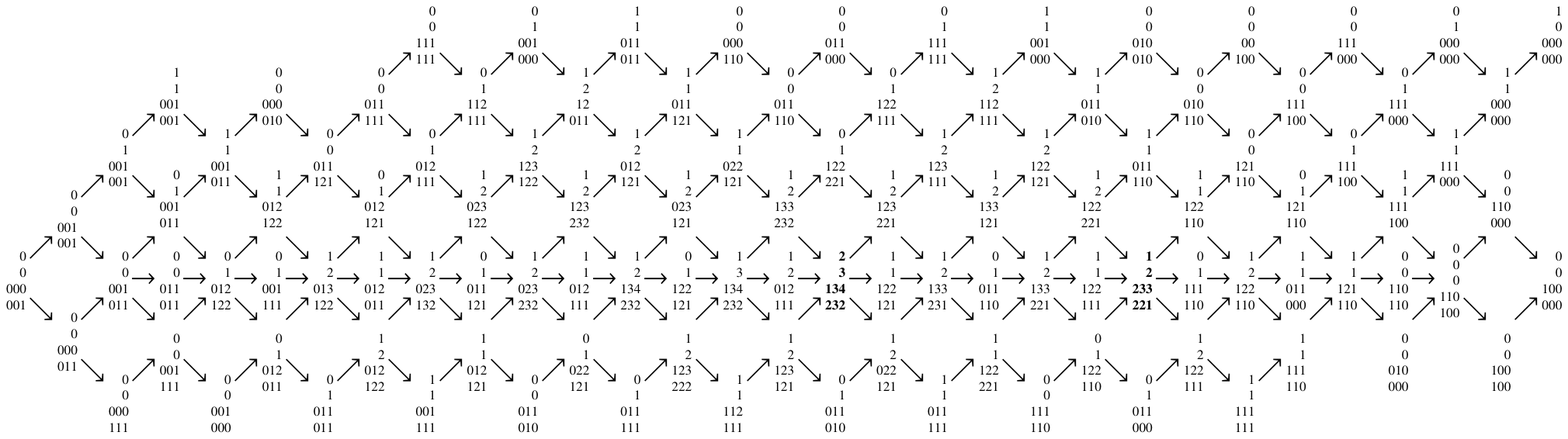}
\end{center}

\subsection[The case (2,2,2,2)]{$\lambda = (2^4)$ or $\lambda = (4^2)$}\label{ark2222}
\begin{center}
\includegraphics[trim=58 480 0 147, clip,width=380pt, height=98pt]{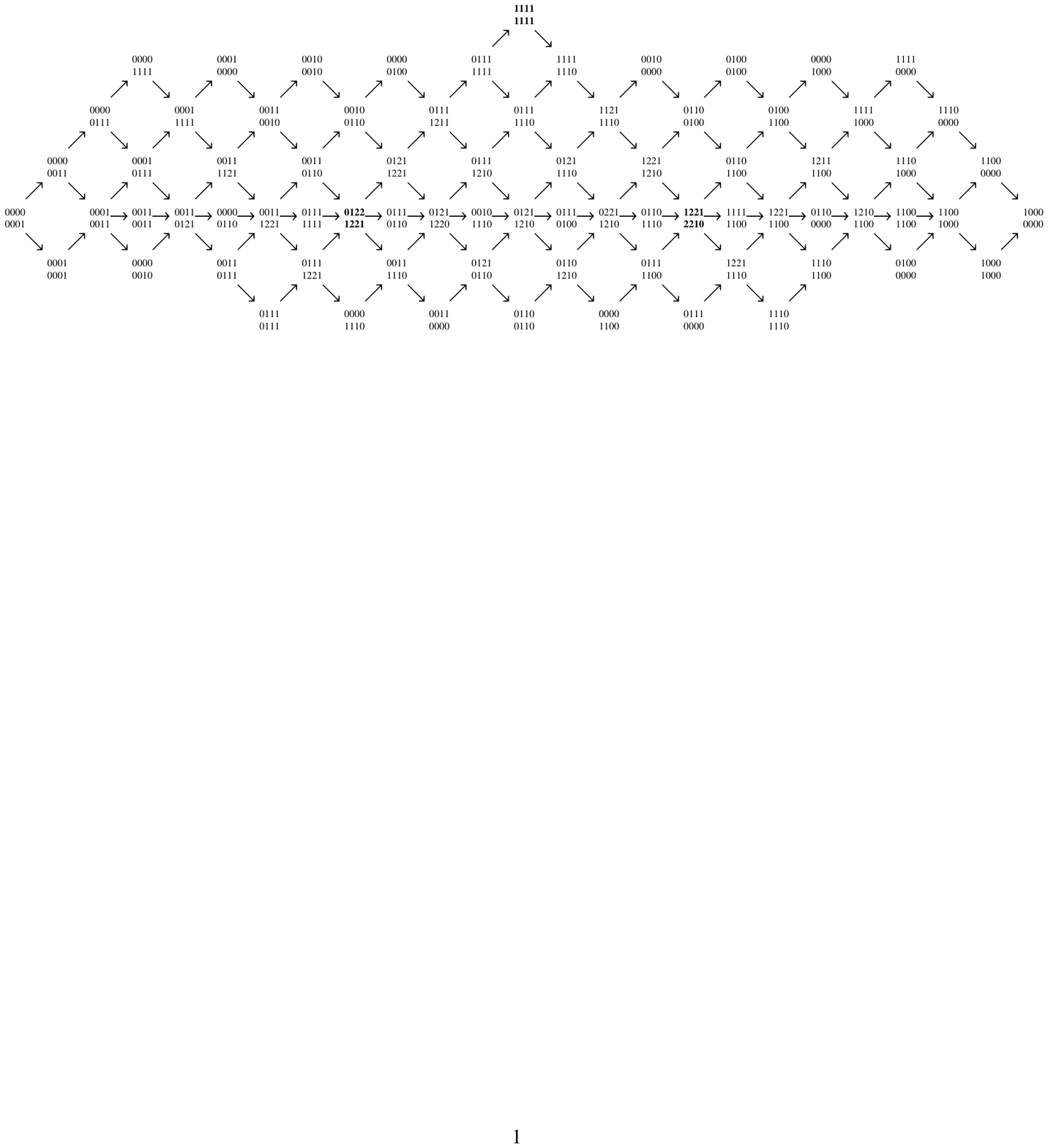}
\end{center}

 \section{Hierarchy of representation types}\label{hierarchy}
\begin{center}
\includegraphics[trim=133 155 97 125, clip,width=380pt]{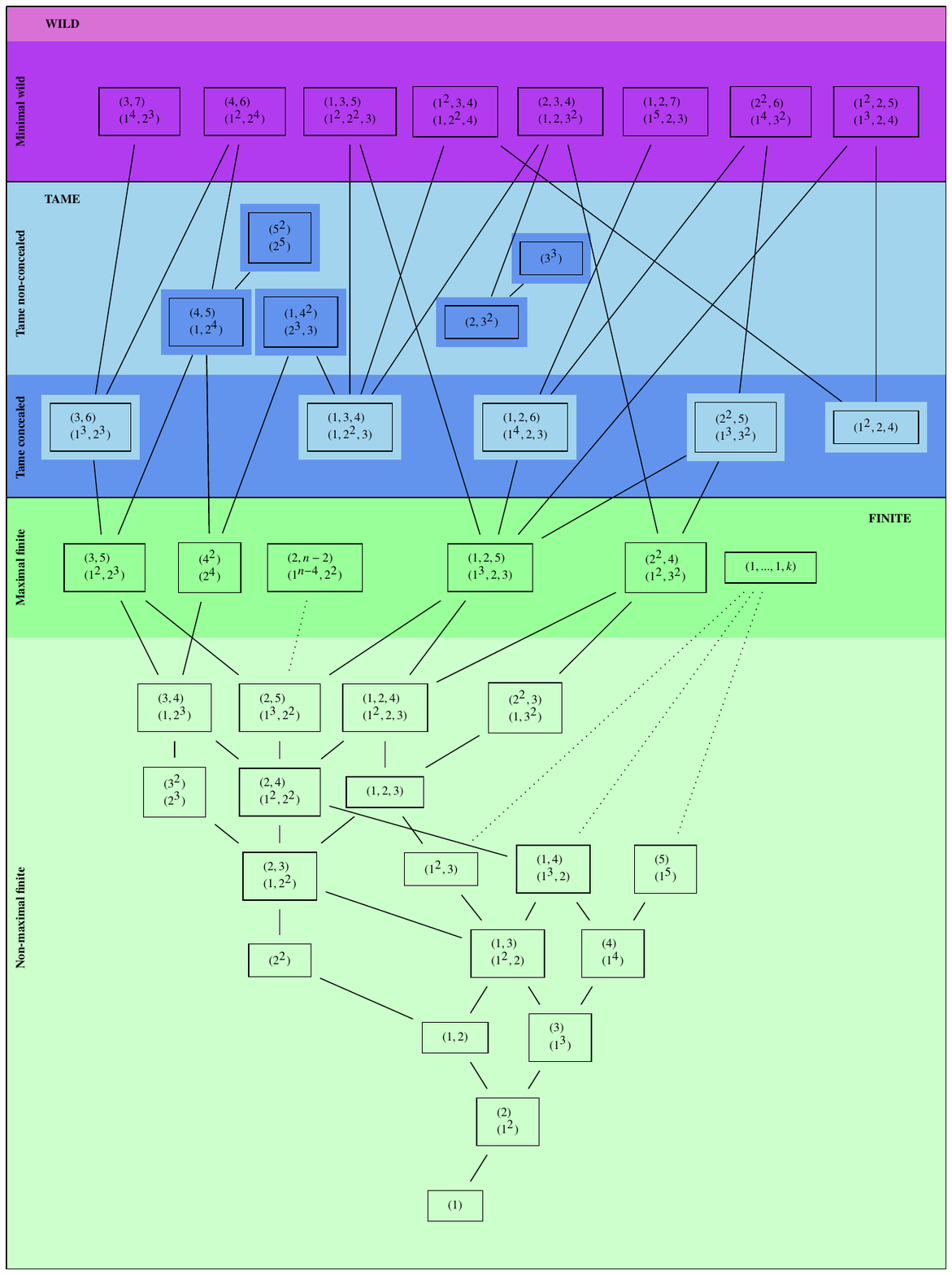}
\end{center}

\end{document}